\tikzset{snake it/.style={decorate, decoration=snake}}
\newcommand{\bracedincludegraphics}[2][]{%
  \sbox0{$\vcenter{\hbox{\includegraphics[#1]{#2}}}$}%
  \left\lbrace
    \vphantom{\copy0}
  \right.\kern-\nulldelimiterspace
  \underbrace{\vrule width0pt depth \dimexpr\dp0 + .3ex\relax\box0}}
\newtheorem{theorem}{Theorem}
\newtheorem{lemma}[theorem]{Lemma}
\newtheorem{proposition}[theorem]{Proposition}
\theoremstyle{definition}
\title{Point-to-line last passage percolation and the invariant measure of a system of reflecting Brownian motions}
\author{Will FitzGerald and Jon Warren}
\begin{document}

\maketitle

\begin{abstract}
This paper proves an equality in law between 
the invariant measure of a reflected system of 
Brownian motions and a vector of point-to-line last passage percolation
times
in a discrete random environment. 
A consequence describes the distribution of the all-time supremum of 
Dyson Brownian motion with drift. A finite temperature version relates
the point-to-line 
partition functions of two directed polymers, with an inverse-gamma 
and a Brownian environment, and generalises Dufresne's identity. Our proof introduces an interacting system of 
Brownian motions with an invariant measure given by a field of 
point-to-line log partition functions for the log-gamma polymer.
\end{abstract}

\paragraph{Keywords.} Reflected Brownian motions, random matrices, Dufresne's identity, log-gamma polymer, point-to-line last passage percolation.
\paragraph{2010 Mathematics Subjects Classifications.} 60J65, 60B20, 60K35.

\section{Introduction}
In this paper we generalise to a random matrix setting the  classical  identity:
\begin{equation} \label{expmax}
 \sup_{t \geq 0} \bigl( B(t) - \mu t \bigr) \stackrel{d}{=} e(\mu)
\end{equation}
where $B$ is a Brownian motion, $\mu > 0$ a drift and $e(\mu)$ is a random variable which has the exponential distribution
with rate $2\mu$. In our generalisation, the Brownian motion  is replaced by the largest eigenvalue process
of 
a Brownian motion with drift on the space of Hermitian matrices 
(see Section \ref{nr_br})
and the single exponentially distributed random variable   is replaced 
by  a random variable constructed from a field of independent exponentially distributed random variables using the operations of summation and maximum. 
In fact this latter random variable is well known as a point-to-line last passage percolation time. 
\begin{theorem}
\label{sup_lpp}
 Let $(H(t):t\geq 0)$ be an $n \times n$ Hermitian Brownian motion,
 let $D$ be 
 an $n \times n$ diagonal matrix with
 entries $D_{jj} = \alpha_{j} > 0$ for each $j = 1, \ldots n$ and let 
 $\lambda_{\text{max}}(A)$ denote the largest eigenvalue of a matrix $A$. Then
 \begin{equation*}
  \sup_{t \geq 0} \lambda_{\max}(H(t) - t D) \stackrel{d}{=} 
  \max_{\pi \in \Pi_n^{\text{flat}}}
  \sum_{(i j) \in \pi} e_{ij} 
 \end{equation*}
where $e_{ij}$ are an independent collection of 
exponential random variables indexed by 
$\mathbb{N}^2 \cap \{(i, j) : i + j \leq n+1\}$
with rates $\alpha_i + \alpha_{n + 1 -j}$
and the maximum is taken over the set of all directed (up and right)
nearest-neighbour paths from $(1, 1)$ 
to the line $\{(i, j) : i + j = n+1\}$ which we denote by $\Pi_n^{\text{flat}}$.
\end{theorem}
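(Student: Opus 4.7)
The plan is to derive Theorem~\ref{sup_lpp} as a consequence of the paper's main structural result, which (as the abstract previews) identifies the invariant measure of a reflecting Brownian motion system with a vector of point-to-line last passage percolation times in an exponential environment built from the rates $\alpha_i$. The theorem will be reduced to showing that $\sup_{t \geq 0} \lambda_{\max}(H(t) - tD)$ equals in law a specific marginal of this invariant measure, namely its top coordinate.

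First I would use the fact that the eigenvalue process of $H(t) - tD$ is a Dyson Brownian motion with drifts $-\alpha_1, \ldots, -\alpha_n$; since all $\alpha_i > 0$, the top eigenvalue tends to $-\infty$ and the supremum in $t$ is almost surely finite. Via the intertwining between Dyson Brownian motion and an interacting system of Brownian motions, in the spirit of Warren and O'Connell--Yor, the top eigenvalue admits a representation as a path functional of $n$ independent drifted Brownian motions. A time-reversal argument then converts the global supremum in $t$ into the top coordinate of the stationary distribution of a corresponding reflecting Brownian motion system, at which point the paper's main theorem identifies this marginal with $\max_{\pi \in \Pi_n^{\text{flat}}} \sum_{(ij) \in \pi} e_{ij}$, with the exponential rates $\alpha_i + \alpha_{n+1-j}$ emerging from the pairwise structure of the reflections.

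The main obstacle is this reduction step. The scalar case $n = 1$ is precisely the classical identity~(\ref{expmax}): the supremum of a Brownian motion with negative drift coincides in law with the stationary distribution of a reflecting Brownian motion with the same drift. Extending this to the matrix setting requires a multilevel time-reversal that is compatible with both the Dyson dynamics and the interlocking reflections of the associated Brownian system, and the precise matching of these two constructions is where the real work lies; routine verifications of finiteness, interchange of suprema, and identification of the exponential rates are straightforward by comparison.
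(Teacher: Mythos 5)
Your reduction is exactly the paper's route: identify $\sup_{t\ge 0}\lambda_{\max}(H(t)-tD)$ in law with the top coordinate of the stationary distribution of the wall-reflected Brownian system, via the known process-level equality $\lambda_{\max}(H(\cdot)-\cdot D)\stackrel{d}{=}Z_n^n(\cdot)$ together with a time reversal (Propositions \ref{time_reversal_vector} and \ref{time_change_refl}(ii)), and then invoke Theorem \ref{equality_law}. The one mis-allocation is of emphasis: in the paper the time reversal is a short substitution $s\mapsto t-s$ acting purely on the reflected Brownian paths (the Dyson side entering only through a separately cited process-level identity), so the genuinely hard content is all in the step you treat as a black box, namely Theorem \ref{equality_law}, rather than in any ``multilevel time-reversal compatible with the Dyson dynamics.''
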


This result gives a
connection between random matrix theory and the 
\emph{Kardar-Parisi-Zhang} (KPZ) universality class, a collection of models 
related to random interface growth including growth models,
directed polymers in a random environment and various interacting particle systems.
Connections of this form originated in the seminal 
work of Baik, Deift, Johansson \cite{baik_deift_johansson}
showing that the limiting distribution 
of the largest increasing subsequence in a random 
permutation is given by the 
Tracy-Widom GUE distribution. 
They have been extensively studied 
since then: for \emph{curved} initial data (in our context 
\emph{point-to-point} last passage percolation) in
\cite{baryshnikov, johansson_2, oconnell_yor, o_connell2002, prahofer_spohn, warren} where the 
\emph{Robinson-Schensted-Knuth} (RSK)
correspondence plays a key role and for \emph{flat} initial data (in our context 
\emph{point-to-line} last passage percolation) in \cite{baik_rains, 
bisi_zygouras, bfps, 
ferrari, remenik_nguyen, sasamoto}
where 
the relationships 
are more mysterious.

 There are two results which particularly 
relate to Theorem \ref{sup_lpp}. 
Baik and Rains \cite{baik_rains} used a symmetrised version of the RSK correspondence to prove 
an equality in law between the point-to-line last passage percolation  
time 
and the largest eigenvalue from the \emph{Laguerre orthogonal 
ensemble} (LOE), see section \ref{nr_br} for the definition;
while a more recent work by Nguyen and Remenik \cite{remenik_nguyen} used the 
approach of multiplicative 
functionals from \cite{mult_functionals} to prove an equality in law between the
supremum of non-intersecting Brownian bridges and 
the square root of the largest eigenvalue of LOE. 
In Section \ref{nr_br} we show these two results can be combined to 
establish Theorem 
\ref{sup_lpp} in the case of \emph{equal drifts}: $\alpha_1= \alpha_2= \ldots =\alpha_n$.




One  aspect of the links between random matrices and growth models in the KPZ class is a striking variational representation for
the largest eigenvalue of Hermitian 
Brownian motion. Specifically, consider 
a system of reflected 
Brownian motions, where each particle is reflected up from 
the particle below (see Section 
\ref{brownian_queues}) then the largest particle of this system
is equal in distribution, as a process,  to  the largest eigenvalue of a Hermitian Brownian motion, see  \cite{baryshnikov, gravner_tracy_widom, 
o_connell2002, warren}). This can be combined with a time reversal, as in \cite{five_author},
to show that the 
all-time supremum of the largest eigenvalue has the same distribution as the largest particle 
in a stationary system of reflecting Brownian motions but with an 
\emph{additional reflecting wall at the origin}. This is a generalisation of the classical  argument that deduces from  the identity  \eqref{expmax}  that the invariant 
measure of a reflected Brownian motion with negative drift 
is the exponential distribution. Thus we are motivated to study the invariant measure of this  system of reflecting Brownian motions with a wall
and unexpectedly we find that the entire invariant measure -- rather than just the marginal distribution of the top particle -- can be described by last passage percolation.

Let $\alpha_j > 0$ for each $j = 1, \ldots, n$ 
and let $(B_1^{(-\alpha_1)}, \ldots, B_n^{(-\alpha_n)})$ be 
independent Brownian motions with drifts $(-\alpha_1, \ldots, -\alpha_n)$. 
A system of reflected Brownian motions with a wall at the origin 
can be defined inductively using the Skorokhod construction,
\begin{align}
 Y_1(t) & = B_1^{(-\alpha_1)}(t)- \inf_{0 \leq s \leq t} B_1^{(-\alpha_1)}(s) 
 = 
 \sup_{0 \leq s \leq t} \bigl( B_1^{(-\alpha_1)}(t) - B_1^{(-\alpha_1)}(s) \bigr) \label{defnY_1} \\
Y_j(t) & = \sup_{0 \leq s \leq t} \bigl(B_j^{(-\alpha_j)}(t)-B_j^{(-\alpha_j)}(s)
+ Y_{j - 1}(s)\bigr)
\text{ for } j \geq 2. \label{defnY_2}
\end{align}

We will show in Section \ref{brownian_queues}
that the distribution of $Y(t) = (Y_1(t), \ldots, Y_n(t))$ converges to a unique
invariant measure 
and we denote a random variable with this law by
$(Y_1^*, \ldots, Y_n^*)$. This is 
equal in distribution to a vector of
point-to-line last passage percolation times where we allow the point from which 
the directed paths start to vary: let
$\Pi_n^{\text{flat}}(k, l)$
denote the set of all directed (up and right) 
nearest-neighbour paths from the point $(k, l)$ 
to the line $\{(i, j) : i + j = n+1\}$ and let 
\begin{equation}  \label{defnG}
 G(k, l) = \max_{\pi \in \Pi_n^{\text{flat}}(k, l)}
  \sum_{(i j) \in \pi} e_{ij}
\end{equation}
where $e_{ij}$ are independent
exponential random variables indexed by $\mathbb{N}^2 \cap \{(i, j) : i + j \leq n+1\}$
with 
rates $\alpha_i + \alpha_{n -j+1}$.

\begin{theorem}
\label{equality_law}
Let $(Y_1^*, \ldots, Y_n^*)$ be distributed according to the 
invariant measure of the system of reflected Brownian motions  
defined by \eqref{defnY_1}, \eqref{defnY_2}
and  let 
$(G(1, n), \ldots, G(1, 1))$ be the vector of
point-to-line last passage percolation times defined by \eqref{defnG}.
For any $n \geq 1$,
 \begin{equation*}
(Y_1^*, \ldots, Y_n^*) \stackrel{d}{=} (G(1,n), \ldots, G(1, 1)).
 \end{equation*}
\end{theorem}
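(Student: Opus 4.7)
The plan is to iterate the Skorokhod recursion so as to express each $Y_k(t)$ as a supremum over ordered times, pass to the invariant law by a time reversal, and then match the resulting Brownian-LPP vector with the exponential LPP vector via an auxiliary interacting system whose invariant measure is the full field $(G(i,j))$. Concretely, unfolding \eqref{defnY_1}--\eqref{defnY_2} gives, for every $k \leq n$,
\[
Y_k(t) = \sup_{0 \leq s_1 \leq \cdots \leq s_k \leq t} \sum_{j=1}^k \bigl[B_j^{(-\alpha_j)}(s_{j+1}) - B_j^{(-\alpha_j)}(s_j)\bigr], \qquad s_{k+1} := t,
\]
all driven by a common family of Brownian motions, so this is already a joint representation of $(Y_1(t), \ldots, Y_n(t))$. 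Because each drift $-\alpha_j$ is negative, a standard time-reversal together with a coupling argument shows that each $Y_k(t)$ converges almost surely to a finite limit $Y_k^*$ as $t \to \infty$; this yields both existence and uniqueness of the invariant measure and exhibits $(Y_1^*, \ldots, Y_n^*)$ as a joint point-to-line \emph{Brownian} last-passage percolation functional of $n$ independent two-sided Brownian motions with drifts $(-\alpha_1, \ldots, -\alpha_n)$.

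To identify this Brownian-LPP vector with $(G(1, n), \ldots, G(1, 1))$, I would---following the strategy flagged in the abstract---introduce an interacting system of Brownian motions $(Z_{ij})$ indexed by the triangle $\{(i, j) \in \mathbb{N}^2 : i + j \leq n + 1\}$, in which the $(i, j)$-coordinate has drift $-(\alpha_i + \alpha_{n+1-j})$ and is coupled to its two neighbours via Skorokhod reflections chosen so as to realise, at the level of diffusive dynamics, the $(\max, +)$ recursion $G(i, j) = e_{ij} + \max\{G(i+1, j), G(i, j+1)\}$ obeyed by the exponential LPP times. Two things then need to be checked: (a) the ``west-edge'' projection $(Z_{1, n+1-k})_{k=1, \ldots, n}$ satisfies exactly the Skorokhod system \eqref{defnY_1}--\eqref{defnY_2}, and so agrees pathwise with $(Y_1, \ldots, Y_n)$; and (b) the unique invariant measure of the extended process $(Z_{ij})$ is the joint law of the field $(G(i, j))_{i + j \leq n + 1}$.

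Part (a) is a direct reading of the defining equations; part (b) is the main technical step and is where the new interacting Brownian dynamics really do their work. A natural route is to approximate the Brownian system by a discrete queueing network on the same triangle (equivalently, to work first at positive temperature with a log-gamma polymer and then take a zero-temperature limit) and invoke a Burke-type reversibility to identify the invariant measure as a product of independent exponentials (respectively, inverse-gammas), one per vertex. The hardest part will be precisely this multidimensional reversibility: the classical one-variable Burke theorem for a single Brownian queue suffices only for the marginal $Y_1^* \sim G(1, 1)$ (Dufresne's identity), and upgrading it to an invariance statement for the entire triangular field is not automatic. Once (b) is established, uniqueness of the invariant measure together with (a) yields Theorem~\ref{equality_law} immediately.
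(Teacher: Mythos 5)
Your opening step (unfold the Skorokhod recursion, time-reverse, and obtain the invariant law as a point-to-line \emph{Brownian} last-passage functional) is correct and is exactly what the paper does in Propositions~\ref{time_reversal_vector} and~\ref{time_change_refl}. From that point on, however, you have chosen a genuinely different route from the one the paper uses to prove Theorem~\ref{equality_law}, and the route you propose is not yet a proof. The paper's primary argument is explicit and computational: it derives Sch\"utz-style determinantal transition densities for the wall-reflected system (Proposition~\ref{transition_density_Brownian}) and for the pushing Markov chain associated with LPP (Proposition~\ref{transition_densities_lpp}), proves a generalised Andr\'eief/Cauchy--Binet identity compatible with the inhomogeneous $D^\alpha,J^\alpha$ operators (Lemmas~\ref{ibp_lemma_inhomogeneous} and~\ref{ibp_lemma_lpp}), and then shows by direct convolution that the explicit determinantal density $\pi$ (Proposition~\ref{invariant_measure}) is simultaneously the invariant measure of $Y$ and the law of $(G(1,n),\dots,G(1,1))$. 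Your proposal avoids all of this; it instead aims at a triangular array of interacting Brownian motions whose invariant measure is the whole field $(G(i,j))$. That is in spirit the route the paper pursues for the \emph{finite-temperature} Theorem~\ref{finite_temp_thm} in Section~\ref{finite_temp_section} and sketches as a ``second proof'' of Theorem~\ref{equality_law} in the zero-temperature subsection.

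There are, however, concrete gaps in the way you set it up. First, the interaction structure you posit --- the $(i,j)$-coordinate given drift $-(\alpha_i+\alpha_{n+1-j})$ and coupled to neighbours by Skorokhod reflections realising the $(\max,+)$ recursion --- is not the right dynamics. The coupled system whose invariant measure is the LPP field has, in addition to two Skorokhod-type reflections, a second kind of one-sided interaction (drawn as $\leadsto$ in the paper): a drift that switches between $-\alpha_{n-j+1}$ and $+\alpha_{i-1}$ depending on the sign of $X_{ij}-X_{i-1,j+1}$. Without this indicator-weighted drift the invariant measure will not be the LPP field, and the purported reversibility (your part (b)) fails. Second, your statement that Burke-type reversibility should identify ``the invariant measure as a product of independent exponentials (respectively, inverse-gammas), one per vertex'' is not correct as written: the invariant measure is the \emph{joint law of the LPP field} $(G(i,j))$, which is a deterministic functional of the iid vertex weights and is highly correlated across vertices, not a product measure. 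What Burke-type arguments give (and what Lemma~\ref{drift_potential_orthogonality} encodes in the paper) is a relation between the forward and backward dynamics under the invariant law; the invariant law itself is the determinantal/log-gamma one. Third, and most seriously, the zero-temperature dynamics you propose have discontinuous drifts, so the standard diffusion machinery (Lemma~\ref{multidimensional_diffusions}) for existence, uniqueness and invariance does not apply; the paper explicitly flags this and circumvents it by proving the smooth finite-temperature statement (Theorem~\ref{finite_tmp_invariant_array}) and only then taking a zero-temperature limit with an approximation argument. You correctly identify the multidimensional reversibility as ``the hardest part'' but you do not supply it, and as stated your plan would stall both on the wrong interaction and on the degeneracy of the zero-temperature SDEs.
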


We will prove Theorem \ref{equality_law} by finding 
transition densities for both systems of a similar form to those found 
for TASEP in 
Sch{\"u}tz \cite{schutz} and 
reflected Brownian motions in Warren \cite{warren}
and use these to calculate explicit densities 
for both vectors. Then Theorem \ref{sup_lpp}, with general drifts, follows from 
Theorem \ref{equality_law} by the time reversal argument discussed previously. 

Point-to-line last passage percolation is related to the \emph{totally asymmetric 
exclusion process} (TASEP) by interpreting last
passage times as the time at which a particle jumps.
The point-to-line geometry corresponds to a periodic initial condition 
for TASEP, where 
particles are initially
located at 
every even site of the negative integers. The joint distribution 
of particle positions at a fixed time is given by a Fredholm determinant 
in \cite{bfps, sasamoto} and
under a suitable limit the authors obtain the $\text{Airy}_1$ process.
Their techniques also provide Fredholm determinants more generally, for example
for the vector 
$(G(1, n), \ldots, G(n, n))$.  In TASEP and in the systems
of reflected Brownian motion
studied in \cite{weiss} the role of the flat 
geometry is played by a \emph{periodic initial condition}, whereas for the Brownian model
$(Y(t))_{t \geq 0}$ considered above this role is played by a \emph{reflecting 
wall} at the origin. This is a substantial difference: a natural path-valued
process to 
consider is the evolution as $n$ varies of the path of the 
top particle; in this setting the techniques 
used in \cite{bfps, weiss, sasamoto} are no longer applicable.
The path of the top particle is a candidate for a finite $n$ analogue 
of the $\text{Airy}_1$ process. 


Another motivation for this reflected system is provided by queueing theory:
reflected Brownian motions have been considered as a model for 
\emph{tandem queues} 
in heavy traffic and the invariant measures have been studied 
extensively both analytically and numerically
\cite{budhijara_lee, dieker2009, dupuis_williams,
glynn_whitt, harrison1987, oconnell_yor}.
It is known from \cite{harrison1987} that the invariant measure 
has an explicit product form when a skew symmetry condition 
for the angles of reflection holds 
and it is known from \cite{dieker2009}
that the invariant measure can be expressed as a sum of exponential random variables
if a weaker relation between the angles holds. 
In our case, 
the presence of a wall, which has a natural 
queueing interpretation 
as a deterministic arrival 
process, ensures that the skew symmetry condition fails;
nonetheless
Theorem \ref{equality_law} describes the non-reversible invariant measure
and we give 
an explicit formula for its density in Section \ref{brownian_queues}.

A further classical result from probability theory that we consider
is Dufresne's identity. Let $\mu > 0$, let 
$B^{(-\mu)}$ be a Brownian motion with drift $-\mu$ and 
let $\gamma^{-1}(\mu)$ denote an inverse gamma random variable with 
shape parameter $\mu$ and rate $1$. Then Dufresne's identity 
is an equality in law,
\begin{equation*}
 2 \int_0^{\infty} e^{2B^{(-\mu)}(t)} dt \stackrel{d}{=} \gamma^{-1}(\mu)
\end{equation*}
which has been studied in 
mathematical finance and diffusion in a random environment (see 
\cite{matsumoto_yor, yor} and the references within).
This is a positive temperature version of the fact that the 
all-time supremum of Brownian motion with negative drift 
has an exponential distribution and suggests the following 
positive  temperature version of Theorem \ref{sup_lpp}. 

\begin{theorem}
\label{finite_temp_thm}
For $i = 1, \ldots, n$ let $\alpha_i > 0$ and let $B_i^{(-\alpha_i)}$ be 
independent Brownian motions with drifts 
$-\alpha_i$. Let $W_{ij}$ be a collection of inverse gamma 
random variables indexed by $\mathbb{N}^2 \cap \{(i, j) : i + j \leq n+1\}$ with 
shape parameters $\alpha_i + 
\alpha_{n - j + 1}$ and rate $1$ and let $\Pi_n^{\text{flat}}$
denote the set of all directed paths from $(1, 1)$ 
to the line $\{(i, j) : i + j = n+1\}$. Then 
\begin{equation*}
\int_{0 =s_0< s_1 < \ldots < s_{n} < \infty} e^{\sum_{i=1}^n
B_i^{(-\alpha_i)}(s_i) - B_i^{(-\alpha_i)}(s_{i-1})}
 ds_1 \ldots ds_{n} \stackrel{d}{=}  2
   \sum_{\pi \in \Pi_n^{\text{flat}}}
  \prod_{(i j) \in \pi} W_{ij}. 
\end{equation*}
\end{theorem}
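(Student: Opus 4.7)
My plan is to develop a positive-temperature counterpart of the strategy used to establish Theorems \ref{sup_lpp} and \ref{equality_law}; as signalled in the abstract, the proof proceeds by introducing an interacting system of Brownian motions whose invariant measure is given by point-to-line log-gamma polymer partition functions.

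First, I would recognise the left-hand side as the limit, as $t\to\infty$, of the O'Connell--Yor semi-discrete polymer partition function (with an additional integration over the terminal time). This is the geometric softening of the zero-temperature last passage value, obtained from (\ref{defnY_1})--(\ref{defnY_2}) by the substitution $\sup_{0\leq s\leq t}f(s)\leadsto \log\int_0^t e^{f(s)}\,ds$. After an appropriate time-reversal of each Brownian motion, analogous to the one used to derive Theorem \ref{sup_lpp} from Theorem \ref{equality_law}, this limit is identified with the top coordinate $Z_n^*$ of the stationary regime of an interacting system $Z(t)=(Z_1(t),\ldots,Z_n(t))$ on $(0,\infty)^n$, where $Z_j$ is driven by $B_j^{(-\alpha_j)}$ and interacts with $Z_{j-1}$ through a geometric Skorokhod-type recursion that softens (\ref{defnY_1})--(\ref{defnY_2}).

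Second, I would prove that $Z(t)$ has a unique invariant measure and that this measure is equal in law to the vector $(2P(1,n),2P(1,n-1),\ldots,2P(1,1))$ of doubled log-gamma point-to-line partition functions, where
\[
P(k,l)=\sum_{\pi\in\Pi_n^{\mathrm{flat}}(k,l)}\prod_{(ij)\in\pi}W_{ij}.
\]
This is the exact positive-temperature analog of Theorem \ref{equality_law}, with $2P(1,n+1-j)$ replacing $G(1,n+1-j)$. Invariance would be established along the Sch\"utz--Warren template used for Theorem \ref{equality_law}: one derives explicit multi-layer transition densities for $Z(t)$, and checks by direct computation that the log-gamma product density is preserved. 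The key algebraic input is the one-step recursion for $P(k,l)$ obtained by splitting each path according to its first step, which should mirror the infinitesimal generator of the geometric Skorokhod recursion.

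Once invariance is established, Theorem \ref{finite_temp_thm} follows, since by the time-reversal of the first step the LHS equals $Z_n^*$ in distribution, while by the identification of the invariant measure $Z_n^*\stackrel{d}{=}2P(1,1)$, which is precisely the RHS. I expect the main obstacle to be the second step. The zero-temperature transition densities involve exponentials and indicator functions, whereas their positive-temperature analogues are expected to be built from gamma (or more generally Whittaker) functions reflecting the integrability of the log-gamma polymer; verifying the invariance of the log-gamma field under these kernels will amount to a chain of non-trivial gamma-function integral identities, and marshalling these into a clean check is likely the most delicate part of the argument.
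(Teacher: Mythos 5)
Your overall framework is correct and matches the paper's: a time-reversal argument reduces the statement to identifying the invariant law of the top coordinate of an ``exponentially softened'' reflected Brownian system with the doubled log-gamma point-to-line partition function. This is precisely the structure of Section \ref{finite_temp_section}: equation \eqref{defnYStarfiniteTemp} gives the reversal, and Theorem \ref{finite_tmp_invariant_array} provides the invariant-measure identification.

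However, the mechanism you propose for the invariance step is quite different from the paper's, and it is where you would likely get stuck. You suggest following the Sch\"utz--Warren template from the zero-temperature proof: derive explicit multi-layer transition densities for the $n$-particle exponentially reflecting system (anticipating gamma or Whittaker kernels) and verify invariance by integral identities. The paper does not do this, and for good reason --- no such explicit determinantal transition densities for this positive-temperature system with a wall are produced, and obtaining them would be a substantial independent result. Instead, the paper introduces a \emph{triangular array} of $n(n+1)/2$ interacting Brownian motions (generator $\mathcal{L}$ in \eqref{defnXarray}) in which the desired $n$-particle system sits as the marginal on one row. The invariance of the full field of log-gamma log-partition-functions is then established at the \emph{generator} level, via a decomposition into a gradient (time-reversible) part and a divergence-free circulation part: Lemma \ref{gradient_diffusion_lemma} shows $\tfrac12(\mathcal{L}_S+\mathcal{A}_S)$ is a gradient diffusion for the potential $V_S$, and Lemma \ref{drift_potential_orthogonality} shows the drift difference $d_S$ is divergence-free and orthogonal to $\nabla V_S$, whence $\mathcal{L}^*_S\pi_S=0$. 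This avoids any explicit transition kernel. The array embedding is the crucial idea your proposal misses: the $n$-particle marginal alone does not carry a recognizable product-form invariant measure, and the full log-gamma field (with its local update rule $\xi_{ij}=\log W_{ij}+\log(e^{\xi_{i,j+1}}+e^{\xi_{i+1,j}})$) is what makes the potential-plus-circulation decomposition tractable. Without it, the ``chain of gamma-function integral identities'' you anticipate is not known to close.
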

The left hand side of this expression is the partition function for a 
point-to-line polymer in a Brownian environment while
the right hand side is the partition function for the point-to-line 
log-gamma polymer.
The \emph{point-to-point} polymers have been 
studied in a number of recent papers: the Brownian model
in \cite{bcf, oconnell_yor, o_connell2012} and  
the log-gamma polymer in 
\cite{borodin_corwin_remenik, cosz, oconnell_sep_zygouras, sep} 
with one motivation being their relationship to the KPZ 
equation (see \cite{corwin_survey} for a survey). 
The \emph{point-to-line} log-gamma polymer, which corresponds to a flat 
initial condition for the KPZ equation, has been studied recently 
by \cite{bisi_zygouras, nguyen_zygouras} using a local version of 
the geometric RSK correspondence
and  an expression is given for 
the Laplace transform of the point-to-line partition function of
the log-gamma polymer
in terms of Whittaker functions.  From
Theorem \ref{finite_temp_thm} 
it follows that the Laplace transform of the partition function of the  point-to-line Brownian model,  which has not been studied 
previously, is also given by the same expression.

For the proof, we use a time reversal  argument to show
that Theorem \ref{finite_temp_thm} follows from a stronger 
result on the 
invariant measure of a system of Brownian motions where the
reflection rules of the system in Theorem \ref{equality_law} 
are replaced by smooth exponential interactions. We find this invariant measure 
by embedding the Brownian system in a larger system of interacting Brownian
motions, indexed by a triangular array, such that the invariant measure 
of this system is given by 
a field of point-to-line log partition functions for the log-gamma polymer.

\section{Equal drifts and connections to LOE}
\label{nr_br}
This section discusses in more detail 
the connection between the results of Nguyen and Remenik \cite{remenik_nguyen}, 
and Baik and Rains \cite{baik_rains}.

We first introduce the relevant random matrix ensembles and processes.
We consider a Brownian motion on the space of
$n \times n$ Hermitian matrices denoted 
$(H(t))_{t \geq 0}$ and constructed from independent entries
$\{H_{i, j} : i \leq j\}$ such that along the diagonal 
$H_{ii}$ are real  standard Brownian motions, the 
entries below the diagonal $\{H_{ij} : i < j\}$ are standard
complex Brownian motions, 
and the remaining entries are determined by the Hermitian constraint 
$H_{ij} = \bar{H}_{ji}$. 
The ordered eigenvalues $\lambda_1, \ldots, \lambda_n$ form a system of Brownian 
motions conditioned 
(in the sense of Doob) not to collide and with a specified entrance law
from the origin which can be constructed as a limit 
from the interior of the Weyl chamber (for example, see \cite{konig2005}). 
The time changed matrix-valued 
process 
$(H^{\text{br}}(t))_{t \geq 0} = ((1-t) H(t/(1-t)))_{t \geq 0}$ 
is a Brownian bridge in the
space of Hermitian matrices and the eigenvalues are given by 
applying this time change to the above system of Brownian motions conditioned not to 
collide. It can be checked, for example
by calculating the joint distribution of particles at a sequence of times, 
that the eigenvalues of a Hermitian Brownian bridge are given by 
a system of Brownian bridges 
which we denote $(B_1^{\text{br}}, \ldots, B_n^{\text{br}})$
all started at zero at time 0 and
ending at zero at time $1$ with a specified entrance and exit law constructed 
as a limit from the interior of the Weyl chamber, and conditioned 
(in the sense of Doob) not to collide in the time interval $t \in (0, 1)$.

Let $X$ be an $m \times n$ matrix with entries given by 
independent standard normal random variables and assume $m \geq n$. 
Then $M = X^T X$ is an $n \times n$ matrix from the \emph{Laguerre orthogonal 
ensemble} (LOE) and 
the joint density of eigenvalues is given by 
\begin{equation*}
f_{\text{LOE}}(\lambda_1, \ldots, \lambda_n) = \frac{1}{c_n}
\prod_{1 \leq i < j \leq n} \lvert \lambda_i - \lambda_j \rvert 
 \prod_{i = 1}^n \lambda^a_i e^{-\lambda_i/2}
\end{equation*}
where $c_n$ is a normalisation constant and the parameter $a = (m - n - 1)/2$. 
Throughout this paper we
will only be interested in the case $a = 0$, or equivalently $m = n + 1$. 
The main result of Nguyen and Remenik \cite{remenik_nguyen} states that
\begin{equation*}
 4\left(\sup_{0 \leq t \leq 1} B_n^{\text{br}}(t)\right)^2  \stackrel{d}{=} 
 \lambda_{\max}^{\text{LOE}}.
\end{equation*}
We use the time change between Hermitian Brownian motions and 
bridges to express this in terms of a Hermitian Brownian motion:
\begin{IEEEeqnarray*}{rCl}
 P(B_n^{\text{br}}(t) \leq x \text{ for all }t \in [0, 1]) & = & 
 P((1-t) \lambda_{\max}(H(t/1-t)) \leq x \text{ for all } t \geq 0) \\
 & = & P(\lambda_{\max}(H(u)) \leq x(1+u) \text{ for all } u \geq 0) \\
 & = & P(x \lambda_{\max}(H(v/x^2)) \leq x^2 + v \text{ for all } v \geq 0) \\
 & = & P(\sup_{t \geq 0} \lambda_{\max}(H(t) - tI) \leq x^2)
\end{IEEEeqnarray*}
where the change of variables are given by $u = t/(1-t)$ and $v = u x^2$ 
and the largest eigenvalue inherits 
the scaling property of Brownian motion.
Therefore 
\begin{equation}
\label{time_change_loe}
4 \sup_{t \geq 0} \lambda_{\max}(H(t) - tI) \stackrel{d}{=} 
 \lambda_{\max}^{\text{LOE}}.
\end{equation}

This is connected to last passage percolation by 
the results of Baik and Rains \cite{baik_rains}. We
refer to Section 10.5 and 10.8.2 of Forrester \cite{forrester_book}
for the precise statements we use which are obtained after taking a 
suitable limit of the geometric data considered in \cite{baik_rains} 
to exponential data. 
Let
$\Pi_n^{\text{flat}}$
denote the set of all directed nearest-neighbour 
paths from the point $(1, 1)$ 
to the line $\{(i, j) : i + j = n+1\},$ where the directed 
paths consist only of up and right steps: that is  to say, paths whose co-ordinates are non-decreasing.
We let $e_{ij}$ be independent exponential random variables indexed by $\mathbb{N}^2 \cap \{(i, j) : i + j \leq n+1\}$
with 
rate $\alpha_i + \alpha_{n - j+1}$ and
define the last passage percolation time
\begin{equation*}
 G(1, 1) = \max_{\pi \in \Pi_n^{\text{flat}}(k, l)}
  \sum_{(i j) \in \pi} e_{ij}.
\end{equation*}
This can be compared with \emph{point-to-point}
last passage percolation in a \emph{symmetric random environment}.  
Fix $n$ and define exponential data
$\{\hat{e}_{ij} : i, j \leq n \}$ by 
$ \hat{e}_{ij} = \hat{e}_{ji} = e_{ij} $ for $i < n - j + 1$, 
and $\hat{e}_{ij} = \frac{1}{2} e_{ij}$ for $i = n - j + 1$.         
%
Let $\Pi_n$ denote the set of all directed (up and right) nearest-neighbour 
paths from the point $(1, 1)$ 
to the point $(n, n)$. 
Due to the symmetry of the random environment
\begin{equation}
\label{symmetry}
 2 \max_{\pi \in \Pi_n^{\text{flat}}} \sum_{(ij) \in \pi} e_{ij} = 
 \max_{\pi \in \Pi_n} \sum_{(ij) \in \pi} \hat{e}_{ij}.
\end{equation}
The RSK correspondence can be applied to \emph{any rectangular array 
of data} and generates a pair of semi-standard Young tableaux $(P, Q)$ with shape 
$\nu$ such that $\nu_1$ is 
equal to the point-to-point 
last passage percolation time. When applied to 
exponential data with symmetry (see Section 
10.5.1 of Forrester \cite{forrester_book}), the two tableaux can 
be constructed from each other and 
the distribution of $\nu$
has a density with respect to Lebesgue measure 
given by
\begin{equation*}
f_{\text{RSK}}(x_1, \ldots, x_n) = \frac{\prod_{i = 1}^n \alpha_i \prod_{i < j} 
(\alpha_i+\alpha_j)}{\prod_{i < j} (\alpha_i - \alpha_j)} 
\text{det}(e^{-\alpha_i x_j})
\end{equation*}
for distinct $\alpha$. 
In the case when $\alpha_i = 1$ for each $i = 1, \ldots, n$ this can 
be evaluated as a limit and gives the eigenvalue density for
LOE (scaled by a constant factor of 2).
In combination 
with equation (\ref{symmetry}) this shows that,
\begin{equation}
\label{rsk_lpp}
 4 \max_{\pi \in \Pi_n^{\text{flat}}} \sum_{(ij) \in \pi} e_{ij} 
  \stackrel{d}{=} 
 \lambda_{\max}^{\text{LOE}}
\end{equation}
Therefore the combination of equation (\ref{time_change_loe})
and (\ref{rsk_lpp}) 
proves
Theorem \ref{sup_lpp} in the case when $D$ is a \emph{multiple 
of the identity matrix}.
We could  use this time change argument in the reverse direction to provide an alternative proof of 
Nguyen and Remenik starting from equation (\ref{rsk_lpp})
and our proof of Theorem \ref{sup_lpp}.

\section{Reflected Brownian motions with a wall}
\label{brownian_queues}

\subsection{Time reversal}

In the introduction we defined a system of reflected Brownian motions 
with a \emph{wall at the origin} $Y = (Y_1, \ldots, Y_n)$
and we now define the system \emph{without 
the wall}. 
Let $\alpha_j > 0$ for each $j = 1, \ldots, n$ 
and let $(B_1^{(-\alpha_n)}, \ldots, B_n^{-(\alpha_1)})$ be 
independent Brownian motions with drifts. 
A system of reflected Brownian motions
can be defined inductively using the Skorokhod construction,
\begin{align*}
 Z_1^n(t) & = B_1^{(-\alpha_n)}(t) 
\\
Z_j^{n}(t) & = \sup_{0 \leq s \leq t} (B_j^{(-\alpha_{n-j+1})}(t)-
B_j^{(-\alpha_{n-j+1})}(s)
+ Z_{j - 1}^{n}(s))
\text{ for } j \geq 2.
\end{align*}
An iterative application of the above gives 
the $n$-th particle the representation
\begin{equation}
\label{Z_repr}
 Z_n^n(t) = \sup_{0 = t_0 \leq t_1 \leq \ldots t_n = t} \sum_{i = 1}^n 
 (B_i^{(-\alpha_{n-i+1})}(t_i) - B_i^{(-\alpha_{n-i+1})}(t_{i-1})).
\end{equation}
This gives an interpretation of the largest particle in a 
reflected system as a point-to-point last 
passage percolation time 
in a Brownian environment. 
Similarly  the $n$-th particle in the system with a wall  defined by (\ref{defnY_1}, \ref{defnY_2}) has a 
representation
\begin{equation}
\label{Y_repr}
 Y_n(t) = \sup_{0 \leq t_0 \leq \ldots \leq t_n = t} \sum_{i = 1}^n 
 (B_i^{(-\alpha_i)}(t_{i}) - B_i^{(-\alpha_i)}(t_{i-1})),
\end{equation}
where the only difference is that there is one extra supremum over $t_0$
and we have reversed the order of the drifts. 
These systems are related: in 
\cite{five_author} it was proved in the zero drift case that for each 
fixed $t$,
\begin{equation*}
Y_n(t) \stackrel{d}{=} \sup_{0 \leq s \leq t} Z_n^n(s) 
\end{equation*}
by a time reversal argument which easily extends to the case with drifts. 
We prove a vectorised version of this time reversal which 
can also be useful for studying the full vector 
$(Y_1, \ldots, Y_n)$.
We first extend the definition of $Z$ to a triangular array $Z = (Z_j^k : 1 \leq j \leq k, 
1 \leq k \leq n)$ as follows
\begin{align}
\label{defnZ}
 Z_1^k(t) & = B_{n-k+1}^{(-\alpha_k)}(t) \text{ for } 1 \leq k \leq n\\
 Z_j^k(t) & = \sup_{0 \leq s \leq t} B_{n-k+j}^{(-\alpha_{k-j+1})}(t)
 - B_{n-k+j}^{(-\alpha_{k-j+1})}(s) + Z_{j-1}^k(s) \text{ for } 2 \leq j \leq k
\end{align}
with the representation
\begin{equation*}
 Z_j^k(t) = \sup_{0 = t_0 \leq \ldots \leq t_j = t} \sum_{i=1}^j 
 (B_{n-k+i}^{(-\alpha_{k-i+1})}(t_i) -
 B_{n-k+i}^{(-\alpha_{k-i+1})}(t_{i-1})). 
\end{equation*}
We note that the $Z$ process is still constructed from 
only $n$ independent Brownian motions.

\begin{proposition}
\label{time_reversal_vector}
For any fixed $t$, let $(Y_1, \ldots, Y_n)$ be defined by equation (\ref{defnY_1}, \ref{defnY_2}) 
and $(Z_1^1, Z^2_2, \ldots, Z_n^n)$ by equation (\ref{defnZ}), then, for any fixed $t\geq 0$,
\begin{equation*}
 (Y_1(t), \ldots, Y_n(t)) \stackrel{d}{=} 
 \left(\sup_{0 \leq s \leq t} Z_1^1(s), \ldots, \sup_{0 \leq s \leq t}
 Z_n^n(s)\right).
\end{equation*}
\end{proposition}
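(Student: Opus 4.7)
The plan is to carry out a single joint time-reversal of the $n$ driving Brownian motions, and verify that under this transformation the representation \eqref{Y_repr} for each $Y_k(t)$ becomes exactly the representation of $\sup_{0 \leq s \leq t} Z_k^k(s)$ obtained from the definition of the triangular array. The scalar version quoted from \cite{five_author} is the case $k = n$, so the only new content is checking that the same time-reversal works \emph{simultaneously} for all $k$.

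First I would define, for the fixed horizon $t$, the reversed processes $\tilde B_i(s) = B_i(t) - B_i(t-s)$ for $i = 1, \ldots, n$ and $s \in [0,t]$. Under this map, $(\tilde B_1^{(-\alpha_1)}, \ldots, \tilde B_n^{(-\alpha_n)})$ is jointly equal in distribution to $(B_1^{(-\alpha_1)}, \ldots, B_n^{(-\alpha_n)})$: each $\tilde B_i$ is again a Brownian motion on $[0,t]$ with drift $-\alpha_i$, and independence across $i$ is preserved because the map acts separately on each coordinate.

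Next I would apply the change of variables $s_j = t - t_{k-j}$ (for $j = 0, \ldots, k$) inside the supremum in \eqref{Y_repr}. The constraint $0 \leq t_0 \leq \cdots \leq t_k = t$ becomes $0 = s_0 \leq s_1 \leq \cdots \leq s_k \leq t$, and a direct computation gives
\begin{equation*}
B_i^{(-\alpha_i)}(t_i) - B_i^{(-\alpha_i)}(t_{i-1}) = \tilde B_i^{(-\alpha_i)}(s_{k-i+1}) - \tilde B_i^{(-\alpha_i)}(s_{k-i}).
\end{equation*}
Relabelling $j = k-i+1$ in the sum yields
\begin{equation*}
Y_k(t) \stackrel{d}{=} \sup_{0 = s_0 \leq s_1 \leq \cdots \leq s_k \leq t} \sum_{j=1}^{k} \bigl(\tilde B_{k-j+1}^{(-\alpha_{k-j+1})}(s_j) - \tilde B_{k-j+1}^{(-\alpha_{k-j+1})}(s_{j-1})\bigr),
\end{equation*}
\emph{jointly} in $k$, because one single transformation on $(B_1,\ldots,B_n)$ produced all these identities at once.

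Finally I would compare this with the representation of $Z_j^k$: expanding the Skorokhod recursion \eqref{defnZ} and then taking the outer supremum over $s \in [0,t]$ gives
\begin{equation*}
\sup_{0 \leq s \leq t} Z_k^k(s) = \sup_{0 = s_0 \leq s_1 \leq \cdots \leq s_k \leq t} \sum_{j=1}^{k} \bigl(B_{n-k+j}^{(-\alpha_{k-j+1})}(s_j) - B_{n-k+j}^{(-\alpha_{k-j+1})}(s_{j-1})\bigr).
\end{equation*}
The drift indices match term by term. To match the Brownian indices, I would relabel the independent family $(\tilde B_i)_{i=1}^n$ via $\tilde B_i \leftrightarrow B_{n-i+1}$; this sends $\tilde B_{k-j+1} \mapsto B_{n-k+j}$, and the drifts remain consistent because $B_{n-i+1}$ in the $Z$-system carries drift $-\alpha_i$. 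Since the relabelling is just a permutation of independent Brownian motions with matching drifts, joint equality in distribution over $k = 1, \ldots, n$ follows.

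The only real obstacle is bookkeeping: one has to keep the two opposing index reversals (the order of drifts in $Y$ versus $Z$, and the order of the driving Brownian motions used as $k$ grows) separate and check that together they are consistent under the single time reversal. There is no analytical difficulty beyond standard Brownian time reversal and the fact that the Skorokhod supremum representation unfolds cleanly into an iterated sup.
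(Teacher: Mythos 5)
Your proposal is correct and follows essentially the same route as the paper: you use the iterated-supremum representation of $Y_k(t)$, the change of variables $s_j = t - t_{k-j}$, the Brownian time-reversal combined with the index relabelling $B_i^{(-\alpha_i)}(t)-B_i^{(-\alpha_i)}(t-\cdot) \stackrel{d}{=} B_{n-i+1}^{(-\alpha_i)}(\cdot)$, and finally the re-indexing of the sum to match the representation of $\sup_{0 \le s \le t} Z_k^k(s)$. The one modest difference is expository rather than substantive: you factor out the transformation $\tilde B_i(s) = B_i(t) - B_i(t-s)$ explicitly and observe that this single map on the $n$ driving Brownian motions produces the identity simultaneously for all $k$, which is the point the paper handles implicitly by stating the time-reversal identity for each $i$ inside the vector.
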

In particular, the equality in law of the marginal distribution of the last 
co-ordinate gives the extension of \cite{five_author} to general drifts,
\begin{equation*}
 Y_n(t) \stackrel{d}{=} \sup_{0 \leq s \leq t} Z_n^n(s). 
\end{equation*}

\begin{proof}
Fix 
$t$ and observe that
\begin{IEEEeqnarray*}{rCl}
 (Y_k(t))_{k=1}^n & = & 
 \left(\sup_{0 \leq t_0 \leq \ldots t_k = t} \sum_{i = 1}^k 
 (B_i^{(-\alpha_i)}(t_{i}) - B_i^{(-\alpha_i)}(t_{i-1})) \right)_{k=1}^n\\
  & = & \left(\sup_{0 = u_0 \leq \ldots u_k \leq t} \sum_{i = 1}^k 
 (B_{i}^{(-\alpha_{i})}(t - u_{k-i}) - B_{i}^{(-\alpha_{i})}(t
 - u_{k-i+1}))\right)_{k=1}^n
\end{IEEEeqnarray*}
by letting $t - u_i = t_{k-i}$.
By time reversal $(B_{i}^{(-\alpha_{i})}(t) - B_{i}^{(-\alpha_{i})}
(t - s))_{s \geq 0} \stackrel{d}{=} (B_{n-i+1}^{(-\alpha_{i})}(s))_{s \geq 0}.$  
Therefore
\begin{IEEEeqnarray*}{rCl}
 (Y_k(t))_{k=1}^n & \stackrel{d}{=} & 
 \left( \sup_{0 = u_0 \leq \ldots u_k \leq t} 
 \sum_{i = 1}^k 
 (B_{n-i+1}^{(-\alpha_{i})}(u_{k-i+1}) - 
 B_{n-i+1}^{(-\alpha_{i})}(u_{k-i})) 
 \right)_{k=1}^n   \\
 & = & \left(\sup_{0 \leq s \leq t} Z^k_k(s)\right)_{k = 1}^n 
\end{IEEEeqnarray*} 
where the final equality requires changing the index of summation from 
$i$ to $k-i+1$.
\end{proof}


\begin{proposition}
\label{time_change_refl}
 For $i = 1, \ldots, n$, let $\alpha_i > 0$.
  \begin{enumerate}[(i)]
\item The vector
$\left(\sup_{0 \leq s \leq t} Z_1^1(s), \ldots, \sup_{0 \leq s \leq t}
 Z_n^n(s)\right)$
converges almost surely as $t \rightarrow \infty$
to a finite random variable.
From this and Proposition \ref{time_reversal_vector} we can deduce that $(Y_1(t), \ldots, Y_n(t))$ converges in distribution  as $t \rightarrow \infty$
to a random variable which we denote $(Y_1^*, \ldots, Y_n^*)$ and satisfies
\begin{equation*}
 (Y_1^*, \ldots, Y_n^*) \stackrel{d}{=} \left(\sup_{0 \leq s \leq \infty} Z_1^1(s), \ldots, \sup_{0 \leq s \leq \infty}
 Z_n^n(s)\right).
\end{equation*}
%
 \item The top particle satisfies
 \begin{equation*}
  Y_n^* \stackrel{d}{=} \sup_{0 \leq t < \infty} Z_n^n(t)
  \stackrel{d}{=} \sup_{0 \leq t <\infty} \lambda_{\max}(H(t) - tD). 
 \end{equation*}
 \item Suppose that $\alpha_i = 1$ for all $i = 1, \ldots, n,$ then
 the top  particle satisfies
\begin{equation*}
   4Y_n^* \stackrel{d}{=}  4\sup_{0 \leq t < \infty} Z_n^n(t) \stackrel{d}{=}
  4\sup_{0 \leq t < \infty} \lambda_{\max}(H(t) - tI)
  \stackrel{d}{=}  \lambda_{\max}^{\text{LOE}} .
\end{equation*}
 \end{enumerate}
\end{proposition}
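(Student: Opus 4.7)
The plan is to handle the three parts in sequence, with part (i) doing the real work and parts (ii), (iii) assembling it with results already established.

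For part (i), I first note that each auxiliary process $Z_k^k$ admits the point-to-point last passage representation
\[
Z_k^k(t) = \sup_{0 = t_0 \leq \cdots \leq t_k = t} \sum_{i=1}^k \bigl( B_i^{(-\alpha_{k-i+1})}(t_i) - B_i^{(-\alpha_{k-i+1})}(t_{i-1}) \bigr),
\]
along which the total drift is bounded above by $-(\min_{j \leq k} \alpha_j)\, t < 0$. Combined with standard Brownian fluctuation bounds this forces $Z_k^k(t) \to -\infty$ almost surely as $t \to \infty$, so each $\sup_{0 \leq s \leq t} Z_k^k(s)$ is nondecreasing in $t$ and bounded above, hence converges almost surely to a finite random variable. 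Coordinatewise monotonicity promotes this to joint almost sure convergence of the whole vector, and in particular to convergence in distribution. Proposition \ref{time_reversal_vector} identifies the distribution of $(\sup_{0 \leq s \leq t} Z_k^k(s))_{k=1}^n$ with that of $(Y_1(t), \ldots, Y_n(t))$ for each fixed $t$, so $(Y(t))_{t \geq 0}$ converges in distribution to a common limit, which we denote $(Y_1^*, \ldots, Y_n^*)$, and this yields the displayed identity.

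Part (ii) is then immediate: taking the last coordinate in (i) gives $Y_n^* \stackrel{d}{=} \sup_{s \geq 0} Z_n^n(s)$, and the classical process-level identity
\[
(Z_n^n(t))_{t \geq 0} \stackrel{d}{=} (\lambda_{\max}(H(t) - tD))_{t \geq 0}
\]
of \cite{baryshnikov, o_connell2002, warren} carries suprema to suprema. The reversal in the ordering of drifts between the definition of $Z$ and the matrix $D$ is harmless, since the eigenvalue spectrum is symmetric in the diagonal entries of $D$.

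Part (iii) is then the specialisation $\alpha_i \equiv 1$ (so $D = I$) of (ii), combined directly with equation \eqref{time_change_loe} derived in Section \ref{nr_br}. The only genuine point to check lies in part (i), namely that almost sure finiteness of each $\sup_{s \geq 0} Z_k^k(s)$ and the distributional equality from Proposition \ref{time_reversal_vector} together deliver convergence in distribution of $(Y(t))_{t \geq 0}$ rather than merely tightness; this is routine once the negative-drift bound above is in place.
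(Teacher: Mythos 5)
Your argument is correct and structurally matches the paper's, with a small difference in the finiteness step of part (i). You bound the cumulative drift along any admissible partition by $-(\min_{j \leq k}\alpha_j)\,t$ and appeal to Brownian fluctuation bounds --- indeed the supremum over partitions of the centred increments is dominated by $\sum_i(\sup_{[0,t]}W_i - \inf_{[0,t]}W_i) = O(\sqrt{t\log\log t})$ almost surely, so the drift wins. The paper instead establishes the sharper statement that $Z_j^k(t)/t \to -\min(\alpha_k,\dots,\alpha_{k-j+1})$ almost surely for every $j\le k$, by induction on $j$ from the Skorokhod recursion $Z^k_j(t) = B(t) + \sup_{s\le t}\bigl(-B(s) + Z^k_{j-1}(s)\bigr)$; this precise linear rate (with its implied uniformity over $\alpha$ bounded away from $0$) is reused in the continuity-in-$\alpha$ argument at the end of the proof of Theorem \ref{equality_law}, which is why the paper proves more than is strictly needed for the present proposition. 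You assemble parts (ii) and (iii) exactly as the paper does, and you usefully make explicit --- a point the paper leaves tacit --- that the reversal of drift order between the definition of $Z$ and the diagonal of $D$ is harmless because both the last passage representation and the largest eigenvalue are distributionally symmetric in the parameters $\alpha_i$.
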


The random variable $(Y_1^*, \ldots, Y_n^*)$ 
is distributed according to the unique invariant measure of the Markov process $Y$, which
 will follow from Lemma \ref{exist_unique}.

\begin{proof}
We first show the almost sure convergence in part (i). 

It is sufficient to  show the suprema $\left(\sup_{0 \leq s \leq \infty} Z_1^1(s), \ldots, \sup_{0 \leq s \leq \infty}
 Z_n^n(s)\right)$ are almost surely finite. 
We prove a  stronger statement that will be useful later, namely, that 
\[
\lim_{t \rightarrow \infty} \frac{1}{t}Z^k_j(t) = -\min( \alpha_k, \alpha_{k-1}, \ldots, \alpha_{k-j+1}).
\]
Denote $\min( \alpha_k, \alpha_{k-1}, \ldots, \alpha_{k-j+1})$ by $\delta^k_j$. We proceed, for each $k$, by induction  on $j$. 

For $j = 1$, we have 
$Z_1^k(t) = B_{n-k+1}^{(-\alpha_k)}(t)$ and the required statement is a property of  Brownian motion with drift.
For the inductive step, 
\begin{IEEEeqnarray*}{rCl}
 Z^{k}_j(t) & = & \sup_{0 \leq s \leq t} 
 B_{n-k+j}^{(-\alpha_{k-j+1})}(t) - B_{n-k+j}^{(-\alpha_{k-j+1})}(s) + Z^{k}_{j-1}(s) \\
& = &  
 B_{n-k+j}^{(-\alpha_{k-j+1})}(t) + \sup_{0 \leq s \leq t} \bigl(- B_{n-k+j}^{(-\alpha_{k-j+1})}(s) + Z^k_{j-1}(s) \bigr).
\end{IEEEeqnarray*}
Now observe that  $B_{n-k+j}^{(-\alpha_{k-j+1})}(t)/t \rightarrow -\alpha_{k-j+1}$, and,  making use of the inductive hypothesis,
\[
\frac{1}{t} \sup_{0 \leq s \leq t}\bigl(- B_{n-k+j}^{(-\alpha_{k-j+1})}(s) + Z^k_{j-1}(s) \bigr) \rightarrow  \max(0, \alpha_{k-j+1}-\delta^k_{j-1}).
\]
Thus we deduce that  $ Z^{k}_j(t)/t$ 
tends to $-\min( \alpha_{k-j+1}, \delta^k_{j-1})= \delta^k_j$.


For parts (ii) and (iii), the first equality in 
distribution follows by the time reversal at the start of this section. 
The second equality in 
distribution follows from the well known equality in distribution of 
processes between the largest particle in a reflected system
of Brownian motions and  
the largest eigenvalue of Hermitian Brownian motion. For equal parameters
a proof can be found in any of
\cite{baryshnikov, gravner_tracy_widom, 
o_connell2002, warren} and for general parameters 
a proof can be found in \cite{interlacing_diffusions}. 
The final equality in distribution for part (iii) follows from the results of 
Nguyen and Remenik and the time change in Section \ref{nr_br}.
\end{proof}

The fluctuations of the largest eigenvalue of the 
Laguerre orthogonal ensemble are governed in the large $n$ limit by
the Tracy-Widom GOE distribution. This distribution arises as the scaling limit 
for models in the KPZ universality class with flat initial data and so we now see that  (the marginals of)  the stationary distribution  of reflecting Brownian motions with a wall also lies within this universality class. This is explained by equation 
(\ref{Y_repr})
or the relationship to $\sup_{0 \leq s \leq \infty} Z_n(s)$ along with 
equation (\ref{Z_repr}) which both
identify $Y_n^*$ 
as a point-to-line last passage percolation time in a 
Brownian environment.

\subsection{Transition Density}
The system of reflected Brownian motions with a wall can be defined through 
a system of SDEs
and we use this to define the process with a general initial condition.
Let $0 \leq y_1 \leq y_2 \leq \ldots \leq y_n$ and define
\begin{equation}
\label{sdes}
 Y_j(t) = y_j + B^{(-\alpha_j)}_j(t) + L_j(t) \text{ for } j = 1, \ldots, n 
\end{equation}
where $L_1$ is the local time process at zero of $Y_1$ and $L_j$ is the 
local time process at zero of $Y_j - Y_{j-1}$ for each $j = 2, 
\ldots, n$. This is a Markov process and we give its transition density. This has a form similar to
\cite{interlacing_diffusions, five_author, schutz, warren, weiss}.
Let $W_n^+ = \{0  \leq z_1 \leq \ldots \leq z_n\}$ denote the state space of
a system of reflected Brownian motions with a wall.
We define differential and integral operators acting on 
infinitely differentiable functions 
$f : [0, \infty) \rightarrow \mathbb{R}$ which have superexponential decay at infinity 
as follows,
\begin{equation}
\label{J_D}
D^{\beta} f(x) = f'(x) - \beta f(x),  \qquad 
J^{\beta} f(x)= \int_x^{\infty} e^{\beta(x - t)} f(t) dt
\end{equation}
where we define the derivative at zero to be the right derivative at zero.
The operators satisfy easy to verify identities:
\begin{enumerate}[(i)]
 \item Commutation relations: for any real $\alpha, \beta$,
 \begin{equation*}
  J^{\beta} D^{\alpha} = D^{\alpha} J^{\beta}, \qquad 
  J^{\beta} J^{\alpha} = J^{\alpha} J^{\beta},  
  \qquad D^{\beta} D^{\alpha} = D^{\alpha} D^{\beta}
 \end{equation*}
\item Inverse relations: let $\text{Id}$ denote the identity map, 
for any real $\alpha$,
\begin{equation*}
 D^{\alpha} J^{\alpha} = -\text{Id}, \qquad  J^{\alpha} D^{\alpha} = -\text{Id}
\end{equation*}
\item Relations to ordinary differentiation and integration: 
for any real $\alpha$,
\begin{equation*}
 D^{\alpha}f(x) = e^{\alpha x} D^0 (e^{-\alpha x} f(x) )
 \qquad J^{\alpha}f(x) = e^{\alpha x} J^0 (e^{-\alpha x} f(x)).
\end{equation*}
\end{enumerate}
We use the notation $D^{\alpha_1, \ldots, \alpha_n} = D^{\alpha_1} \ldots D^{\alpha_n}$
and $J^{\alpha_1, \ldots, \alpha_n} = J^{\alpha_1} \ldots J^{\alpha_n}$
to denote concatenated operations
and $D_x^{\alpha}, J_x^{\alpha}$ in order to specify a variable $x$ on which 
the operators act. 
We note that when the operators act on different variables they also commute.
Let $\phi_t^{(\alpha)}$ (resp. $\psi_t^{(\alpha)}$ and $\eta_t^{(\alpha)}$) be the
transition density of a Brownian motion (resp. Brownian motion killed at 
the origin and reflected at the origin) with drift $\alpha$. 
When the drift is zero we may omit the superscript.
Observe that $\psi_t(x, y) = \phi_t(y - x) - \phi_t(y + x)$ for all $x, y \geq 0$.
The right hand side can be defined for all $x, y$ and can be used 
to specify the right derivative of $\psi_t$ at zero to ensure that
the operation $D$ can be applied to $\psi_t$. 
A similar procedure can be used to specify the right derivative at zero 
of $\psi_t^{(\alpha)}, 
\eta_t^{(\alpha)}$ and all of these functions lie in the class of functions specified at the start of this section.  
We define 
\begin{equation*}
 r_t(x, y) = e^{-\sum_{i=1}^n \alpha_i(y_i - x_i) - \alpha_i^2t/2}
 \text{det}(D_{y_j}^{\alpha_1 \ldots \alpha_j} J_{x_i}^{-\alpha_1 \ldots -\alpha_i}
 \psi_t (x_i, y_j))_{i, j = 1}^n.
\end{equation*}

\begin{proposition}
\label{transition_density_Brownian}
The transition probabilities of $(Y_1(t), \ldots, Y_n(t))_{t \geq 0}$ 
have a density with respect to Lebesgue measure given by $r_t(x, y)$.
\end{proposition}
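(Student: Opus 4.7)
The plan is to verify directly that $r_t(x,y)$ is the transition density of the Markov process $Y$. Since $Y$ is a well-defined Feller process on $W_n^+$, its transition density is uniquely characterised by the forward Kolmogorov equation in the interior of $W_n^+$, the reflection boundary conditions along the walls of $W_n^+$, and the initial condition $r_0(x,\cdot) = \delta_x$. I will check these three properties in turn.

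For the forward PDE $\partial_t r = \sum_j (\frac{1}{2}\partial_{y_j}^2 + \alpha_j \partial_{y_j}) r$ in the interior, I exploit that the $(i,j)$-entry of the matrix depends on $y$ only through the single variable $y_j$. By multilinearity of the determinant it suffices to show that each entry, viewed as a function of $(y_j, t)$ with $x_i$ and the other $y_k$ as parameters, satisfies the one-dimensional drifted heat equation $\partial_t u = \frac{1}{2}\partial_{y_j}^2 u + \alpha_j \partial_{y_j} u$. The kernel $\psi_t(x_i, y_j)$ solves the 1D heat equation in $(y_j, t)$; the operator $J_{x_i}^{-\alpha_1 \ldots -\alpha_i}$ acts only on $x_i$ and so trivially commutes with $\partial_t - \frac{1}{2}\partial_{y_j}^2$; and $D_{y_j}^{\alpha_1 \ldots \alpha_j}$ is a polynomial in $\partial_{y_j}$ with constant coefficients, which also commutes. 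Distributing the prefactor $e^{-\alpha_j y_j - \alpha_j^2 t/2}$ into column $j$ then converts the driftless heat equation into the drifted one via the standard Girsanov-type conjugation.

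The reflection boundary conditions imposed by the SDE~(\ref{sdes}) are $\partial_{y_1} r + 2\alpha_1 r = 0$ at $y_1 = 0$, and $(\partial_{y_j} + 2\alpha_j) r = (\partial_{y_{j-1}} + 2\alpha_{j-1}) r$ at $y_j = y_{j-1}$ for $j \geq 2$ (the flux-balance form of the one-sided reflections). The wall condition follows from $\psi_t(x_i, 0) = 0$ for $x_i > 0$ together with the commutation relation $D^\alpha J^\beta = J^\beta D^\alpha$ and the explicit form of $D_{y_1}^{\alpha_1}$ acting on the first column. The interaction condition is more delicate, since the two sides involve different column-modified determinants: the key observation is that the $y_j$-operator in column $j$ differs from that in column $j-1$ by the single factor $D_{y_j}^{\alpha_j}$, and combining this with the commutation and inverse relations between $D$ and $J$ yields the required equality at $y_j = y_{j-1}$.

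For the initial condition, as $t \downarrow 0$, $\psi_t(x, y) \to \delta(x - y)$ on $(0, \infty)^2$. Iterating $J_{x_i}^{-\alpha_1 \ldots -\alpha_i}$ on this limit yields indicator-type kernels supported on $\{x_i \leq y_j\}$, and applying the $D$ operators together with the inverse relations $D^\alpha J^\alpha = -\text{Id}$ reduces the $t = 0$ limit of the determinant to $\prod_j \delta(y_j - x_j)$ on $W_n^+$. The main obstacle in this programme is the reflection boundary condition at $y_j = y_{j-1}$: unlike the wall condition, it is a global statement about the interplay of adjacent columns of the matrix and requires careful book-keeping of the algebra of the operators $D$ and $J$, together with the asymmetry introduced by the one-sided reflection (only $Y_j$, not $Y_{j-1}$, is pushed at this boundary), to ensure that the two boundary fluxes collapse onto the same determinant.
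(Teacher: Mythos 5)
Your proposal attacks the problem via the Kolmogorov forward (Fokker--Planck) equation in the $y$ variables, whereas the paper verifies the Kolmogorov \emph{backward} equation in the $x$ variables together with the boundary conditions $\partial_{x_1} r = 0$ at $x_1 = 0$ and $\partial_{x_i} r = 0$ at $x_i = x_{i-1}$, and then identifies $r_t$ as the transition density through an It\^{o}/martingale argument combined with Lemma~\ref{schutz_brownian_lemma} for the $t\downarrow 0$ limit. This is not merely a matter of taste: the determinantal structure of $r_t(x,y)$ is adapted to the backward boundary conditions, because applying $D_{x_i}^{-\alpha_i}$ kills a $J_{x_i}^{-\alpha_i}$ via the inverse relation and makes the $i$-th and $(i-1)$-th rows coincide at $x_i = x_{i-1}$; the corresponding statement in the $y$ variable does not reduce to a simple row identification.

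There is also a substantive error in your forward boundary condition at $y_j = y_{j-1}$. You propose the flux-balance form $(\partial_{y_j} + 2\alpha_j) r = (\partial_{y_{j-1}} + 2\alpha_{j-1}) r$, which is a normal-derivative condition and would be correct for a \emph{symmetric} (two-sided) reflection of the pair off their common boundary. Here, however, the reflection is one-sided: only $Y_j$ is pushed, in the direction $e_j$, which is \emph{oblique} to the boundary $\{y_j = y_{j-1}\}$. Working out the adjoint boundary condition for this oblique reflection (write $\mathcal{G}$ with domain $\{f : \partial_{y_j} f = 0 \text{ on } y_j = y_{j-1}\}$, apply Green's identity, and integrate the tangential derivative of $f$ by parts along the boundary) gives instead
\begin{equation*}
\partial_{y_{j-1}} r + (\alpha_{j-1} - \alpha_j)\, r = 0 \qquad\text{at } y_j = y_{j-1},
\end{equation*}
a condition on the \emph{passive} coordinate $y_{j-1}$ alone. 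So the condition you would need to check is not the one you wrote down, and you do not in fact verify either condition, acknowledging that it ``requires careful book-keeping.'' Finally, even with the correct forward PDE, boundary conditions and initial condition in hand, the claim that these ``uniquely characterise'' the transition density is not automatic for oblique-boundary Fokker--Planck problems; the paper sidesteps this by proving the backward equation and then running the martingale argument of \cite{warren}, which provides the identification directly. In summary: the forward strategy is not in principle impossible, but as written the boundary condition is wrong, the hardest step is left undone, and the uniqueness needed to conclude is asserted rather than argued, whereas the paper's backward route avoids all three difficulties.
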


%
%

The following calculation shows that the proposition holds in the case $n = 1$ by using Siegmund 
duality. This can be stated in an integral form, for any fixed $t$,
\begin{equation*}
 \int_0^y \eta_t^{(-\alpha)}(x, u) du = \int_x^{\infty} \psi_t^{(\alpha)}(y, v) dv. 
\end{equation*}
We differentiate this expression in $y$, apply Girsanov's theorem
and symmetry to the 
killed Brownian motion
and use the identities in (iii) to obtain for all $x, y \geq 0$,
\begin{IEEEeqnarray*}{rCl}
  \eta_t^{(-\alpha)}(x, y) =  
  D^0_y J_x^0 \psi_t^{(\alpha)}(y, x)
 & = & 
 D^0_y J^0_x e^{-\alpha(y - x) - 
 \alpha^2 t/2} \psi_t(y, x) \\
& = & 
 e^{-\alpha(y - x) - 
 \alpha^2 t/2} D^\alpha_y J^{-\alpha}_x \psi_t(x, y).
\end{IEEEeqnarray*}
%
In the case of equal drifts this identity can be used to give an alternative form
of Proposition 
\ref{transition_density_Brownian}.
For $k \geq 1$ let $J^{(k)}$ (resp. $(D^{(k)}$)
denote $J^0$ (resp. $D^0$) concatenated $k$ times. 
%
Define  \begin{equation*}
  \bar{r}_t(x, y) = \text{det}(D_{y_j}^{(j - 1)} J_{x_i}^{(-i + 1)}
  \eta_t^{(-1)}(x_i, y_j))_{i, j = 1}^n.
 \end{equation*}
The transition probabilities of 
$(Y_1(t), \ldots, Y_n(t))_{t \geq 0}$ with drift vector 
$(-1, \ldots, -1)$ 
have a density with respect to Lebesgue measure on $W_n^+$ given by
$\bar{r}_t(x, y)$.

\begin{lemma}
\label{schutz_brownian_lemma}
For any $f : W_n^+ \rightarrow \mathbb{R}$ which is bounded, 
continuous and zero in a neighbourhood 
of the boundary of $W_n^+$,
\begin{equation*}
 \lim_{t \rightarrow 0} \int_{W_n^+} r_t(\mathbf{x}, \mathbf{y}) 
 f(\mathbf{y}) d\mathbf{y} = f(\mathbf{x})
\end{equation*}
uniformly for all $\mathbf{x} \in W_n^+$. This also holds with $r$ 
replaced by $\bar{r}$.
\end{lemma}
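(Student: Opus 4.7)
The plan is to exploit the algebraic structure of the determinant defining $r_t$ to reduce the statement to a one-dimensional approximate-identity argument. Since $f$ is only assumed continuous, I would first prove the claim for smooth $f$ with compact support in the interior of $W_n^+$; the extension to continuous $f$ vanishing near the boundary follows by uniform approximation once one has a bound on $\int|r_t(x,y)|\,dy$ uniform in $t$ and $x$, which can be obtained directly from the Gaussian decay of $\psi_t$ combined with the explicit form of the operators $D$ and $J$. So fix smooth $f$ supported away from $\partial W_n^+$.

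Because each operator $D^{\alpha_1,\ldots,\alpha_j}_{y_j}$ appearing in $M_t(i,j):=D^{\alpha_1,\ldots,\alpha_j}_{y_j}J^{-\alpha_1,\ldots,-\alpha_i}_{x_i}\psi_t(x_i,y_j)$ acts on a single column of the matrix and on a variable distinct from all other such operators, multilinearity of the determinant gives
\[
 \det(M_t(x,y)) = \Bigl(\prod_{j=1}^n D^{\alpha_1,\ldots,\alpha_j}_{y_j}\Bigr)\det(N_t(x,y)),
\]
where $N_t(i,j):=J^{-\alpha_1,\ldots,-\alpha_i}_{x_i}\psi_t(x_i,y_j)$. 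Repeated integration by parts in each $y_j$ moves these $D$'s off the kernel and onto (a variant of) $f$; all boundary terms vanish by the support hypothesis on $f$. This reduces the claim to computing the $t\to 0$ limit of $\int_{W_n^+}\tilde f(y)\det(N_t(x,y))\,dy$, where $\tilde f$ is a compactly supported combination of derivatives of $f$ with the exponential prefactor already present in $r_t$.

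I would then pass to the limit inside this simpler integral. Since $\psi_t(x,y)=\phi_t(y-x)-\phi_t(y+x)$ and the support of $\tilde f$ keeps $x_i,y_j$ bounded away from $0$, the reflection term is $O(e^{-c/t})$ and $\psi_t(x_i,y_j)\to\delta(y_j-x_i)$. Applying $J^{-\alpha_1,\ldots,-\alpha_i}_{x_i}$ to this delta gives an explicit kernel $N_0(i,j)=K_i(y_j-x_i)\mathbf{1}[y_j>x_i]$, with $K_i$ a linear combination of exponentials $e^{\alpha_l\cdot}$, $l\le i$, obtained by iterated one-dimensional integration. The core algebraic identity is then
\[
 \int_{W_n^+}\tilde f(y)\det(N_0(x,y))\,dy \;=\; (-1)^{n(n+1)/2}e^{-\sum_i\alpha_i x_i}f(x),
\]
which combined with the remaining prefactor $e^{\sum_i\alpha_i x_i-\sum_i\alpha_i^2 t/2}$ from $r_t$ yields the limit $f(x)$.

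The main obstacle is this algebraic identity. I would prove it by induction on $n$: expanding $\det(N_0)$ along the first column, the ordering constraints $0<x_1<\cdots<x_n$ and $y_1\le\cdots\le y_n$ restrict each permutation's support, and then the inverse relation $D^{\alpha}J^{-\alpha}=-\mathrm{Id}$ from the paper's list of operator identities produces telescoping cancellations between the surviving permutation terms and the differential operators that were moved onto $f$. Uniformity in $x\in W_n^+$ follows from equicontinuity on compact subsets of the interior, combined with the elementary observation that for $x$ near $\partial W_n^+$ both sides are small: $f(x)$ vanishes by hypothesis, and $\int r_t(x,y)f(y)\,dy\to 0$ because $r_t(x,\cdot)$ concentrates near $x$ as $t\to 0$ while the support of $f$ is a fixed positive distance from the boundary. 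The $\bar r_t$ version is proved identically, using $\eta_t^{(-1)}(x,y)\to\delta(y-x)$ (a standard property of reflected Brownian motion with drift) in place of the delta-convergence of $\psi_t$.
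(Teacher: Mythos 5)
Your approach is genuinely different from the paper's. The paper splits $\psi_t(x,y) = \phi_t(y-x) - \phi_t(y+x)$ and correspondingly writes $q = q_1 + q_2$; it shows $q_2$ contributes nothing in the limit because every product in its Laplace expansion carries a factor $D^{\alpha_1\ldots\alpha_j}_{y_j}J^{-\alpha_1\ldots-\alpha_i}_{x_i}\phi_t(x_i + y_j)$, which decays exponentially on the region $\{y_1 > \epsilon\}$ where $f$ is supported. For $q_1$ it invokes the translation-invariance identity $D^\alpha_y J^{-\alpha}_x h(y-x) = h(y-x)$, so the diagonal entries collapse to $\phi_t(y_j - x_j)$ and the identity-permutation term is exactly the free $n$-dimensional heat kernel, with the off-diagonal terms negligible as in Warren's original Karlin--McGregor argument. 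You instead pull the operators $D^{\alpha_1\ldots\alpha_j}_{y_j}$ out of the determinant by multilinearity, move them onto the (exponentially weighted) test function by integration by parts, and then take the $t\to 0$ limit of the kernel $N_t(i,j)$, arriving at an explicit discontinuous kernel $K_i(y_j - x_i)\mathbf{1}[y_j > x_i]$. The paper's route is shorter because the translation-invariance trick eliminates the operators in one stroke and lands directly on a classical approximate identity; your route is more computational but avoids invoking Warren's off-diagonal estimate.

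The genuine gap is the ``core algebraic identity'': the claim that
\[
 \int_{W_n^+}\tilde f(y)\det\bigl(K_i(y_j - x_i)\mathbf{1}[y_j > x_i]\bigr)\,dy \;=\; (-1)^{n(n+1)/2}e^{-\sum_i\alpha_i x_i}f(x)
\]
is precisely the content of the lemma after your preparatory steps, but you do not prove it. You sketch an induction on $n$ invoking ``telescoping cancellations between the surviving permutation terms and the differential operators that were moved onto $f$,'' but this is the entire difficulty: after the Laplace expansion of $\det(N_0)$, each permutation $\sigma$ contributes a product of indicator-weighted kernels $K_i(y_{\sigma(i)}-x_i)\mathbf{1}[y_{\sigma(i)}>x_i]$, and showing that the integration against the iterated adjoint $\prod_j(-D^{-\alpha_1\ldots-\alpha_j})_{y_j}$ applied to $e^{-\sum\alpha_iy_i}f(y)$ produces exactly one surviving contribution equal to $e^{-\sum\alpha_ix_i}f(x)$ requires a careful interplay between the inverse relations $D^\alpha J^\alpha = -\mathrm{Id}$, the ordering constraints, and repeated integration by parts (essentially you must re-derive a variant of Lemma \ref{ibp_lemma_inhomogeneous}). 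Without working this out explicitly the proof is incomplete; in addition, the preliminary reduction from continuous to smooth $f$ needs a proved uniform bound on $\int_{W_n^+}|r_t(x,y)|\,dy$ over $t$ small and $x\in W_n^+$, which you assert but do not establish, and the justified passage of the $t\to 0$ limit under the integral (dominating the oscillatory determinant of Gaussian-derivative kernels) also needs an argument.
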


Let
$\mathcal{G}_{x_k} = \frac{1}{2} \frac{d}{dx^2} - \alpha_k \frac{d}{dx}$ 
denote the generator of a Brownian motion with drift $-\alpha_k$
and $\mathcal{G} = \sum_{k=1}^n \mathcal{G}_{x_k}$.

\begin{proof}[Proof of Proposition \ref{transition_density_Brownian}]
We show that $r$ satisfies the Kolmogorov backward
equations, together with its boundary conditions,  for the 
process $Y = (Y_1, \ldots, Y_n)$. 
 Let 
 \begin{equation*}
q(t; \mathbf{x, y}) =   \text{det}(D_{y_j}^{\alpha_1 \ldots \alpha_j} 
J_{x_i}^{-\alpha_1 \ldots -\alpha_i}
 \psi_t (x_i, y_j))_{i, j = 1}^n
 \end{equation*}
 and observe that 
 \begin{equation*}
 \frac{\partial r}{\partial x_i} = e^{-\sum_{i=1}^n \alpha_i(y_i - x_i) - \alpha_i^2 t/2}
 D_{x_i}^{-\alpha_i} q = 0 \text{ at } x_i = x_{i-1}
 \end{equation*}
 because the $i$-th and $(i-1)$-th rows of the determinant defining  $D_{x_i}^{-\alpha_i} q$ coincide
 at $x_i = x_{i-1}$, by virtue of  the identity $ D_{x_i}^{-\alpha_i} J_{x_i}^{-\alpha_i} f = -f.$
 
To show that $\partial r/\partial x_1 = 0$ at $x_1 = 0$ 
we consider the matrix in the definition of $r$ 
and bring the prefactor $e^{\alpha_1 x_1}$ in $r$
into the top row of this matrix.
We use the identity
$e^{\alpha_1 x_1} J^{-\alpha_1}_{x_1} \psi_t(x_1, y_j) = 
J^0_{x_1} e^{\alpha_1 x_1} \psi_t(x_1, y_j)$ and observe that the derivative in $x_1$
of the right hand side equals zero when evaluated at $x_1 = 0$.  This shows that the derivative of every 
term in the top row of this matrix equals zero because
the derivative in $x_1$ commutes with 
the operations acting in $y_j$. Therefore $\partial r/\partial x_1 = 0$ at $x_1 = 0$.
 

To show that the Kolmogorov backward equation is satisfied for 
$x, y$ in the interior of $W_n^+$ 
we let $r_{ij}(t; x_i, y_j)= e^{\alpha_i x_i - \alpha_i^2 t/2} D_{y_j}^{\alpha_1 \ldots \alpha_j}
J_{x_i}^{-\alpha_1, \ldots, -\alpha_i} \psi_t(x_i, y_j)$.
We
differentiate in $t$, and use the fact that $\psi_t$ satisfies the heat equation,  to obtain  
\begin{equation*}
\frac{\partial r_{ij}(t; x_i, y_j)}{\partial t} = 
e^{\alpha_i x_i - \alpha_i^2 t/2}  D_{y_j}^{\alpha_1 \ldots \alpha_j}
J_{x_i}^{-\alpha_1, \ldots, -\alpha_i}
  \left( \frac{1}{2} \frac{\partial^2\psi_t(x_i, y_j)}{\partial x_i^2} - 
   \frac{1}{2} \alpha^2 \psi_t(x_i, y_j)\right).
\end{equation*}
It is convenient to express the terms in brackets
using the operations $D$ and $J$,
\begin{equation*}
\left( \frac{1}{2} \frac{\partial^2\psi_t(x_i, y_j)}{\partial x_i^2} - 
   \frac{1}{2} \alpha^2 \psi_t(x_i, y_j)\right) =  \frac{1}{2}
   D_{x_i}^{\alpha_i} D_{x_i}^{-\alpha_i} \psi_t(x_i, y_j). 
\end{equation*}
The operations $J_x$ and $D_x$ commute and therefore
 \begin{IEEEeqnarray*}{rCl}
\frac{\partial r_{ij}(t; x_i, y_j)}{\partial t} & = & \frac{1}{2}e^{\alpha_i x_i - \alpha_i^2 t/2} 
 D_{x_i}^{\alpha_i} D_{x_i}^{-\alpha_i}
  D_{y_j}^{\alpha_1 \ldots \alpha_j}  J_{x_i}^{-\alpha_1, \ldots, -\alpha_i}
  \psi_t(x_i, y_j) 
  \\ 
  & = & \frac{1}{2}
  e^{\alpha_i x_i}  D_{x_i}^{\alpha_i} D_{x_i}^{-\alpha_i} e^{-\alpha x_i} 
  r_{ij}(t; x_i, y_j) \\
  & = & \mathcal{G}_{x_i} r_{ij}(t; x_i, y_j).
\end{IEEEeqnarray*}
Therefore, since $r_t(x,y)= e^{-\sum \alpha_i y_i}\text{det}(r_{ij}(t; x_i, y_j))$,
\begin{equation*}
 \frac{\partial r}{\partial t} = \sum_{i = 1}^n \mathcal{G}_{x_i} r.
\end{equation*}
The proposed transition densities $r$ satisfy the Kolmogorov backward equation 
for $Y = (Y_1, \ldots, Y_n)$ and the arguments in \cite{warren}
show that $r$ are the transition densities for $Y$. We sketch this argument but refer 
to \cite{warren} for the details.
Let $f$ be a bounded continuous function which is zero in a neighbourhood of 
the boundary of $W_n^+$ and define $F(u, x) = \int_{W_n^+} r_t(x, y) f(y) dy$.
Fix some $T > 0$ and $\epsilon > 0$. 
By using It\^{o}'s formula and the fact that $r_t$ solves the 
Kolmogorov backward equation we obtain that
$(F(T+\epsilon -t, Y_t) : t \in [0, T])$ is a martingale with respect to the 
process $(Y_t)_{t \geq 0}$.
In particular,
$F(T+\epsilon, y) = E_y(F(\epsilon, Y_T))$. 
The $\epsilon$ is introduced in order to ensures 
smoothness of $F$ and allow the 
application of It\^{o}'s formula, however, using Lemma \ref{schutz_brownian_lemma} 
we can take the limit as $\epsilon$ 
tends to zero
to conclude that
$E_y(f(Y_T)) = \int_{W_n^+} r_T(x, y) f(y) dy$. This holds for all 
bounded continuous $f$ which are zero in a neighbourhood of 
the boundary of $W_n^+$ which is sufficient to prove
that $r_T(y, \cdot)$ is the  density of  the distribution of $Y_T$
since this distribution does not charge the boundary.
\end{proof}

\begin{proof}[Proof of Lemma \ref{schutz_brownian_lemma}]
The proof follows the argument in \cite{warren}. 
The transition density for killed Brownian motion satisfies 
$\psi_t(x, y) = \phi_t(y - x) - \phi_t(x + y)$ and we can split the determinant 
\begin{equation*}
 q(t; \mathbf{x}, \mathbf{y}) = \text{det}(D_{y_j}^{\alpha_1 \ldots \alpha_j} 
 J_{x_i}^{-\alpha_1 \ldots 
 -\alpha_i} \psi_t(x_i, y_j))_{i, j = 1}^n
\end{equation*}
into a sum of two terms $q = q_1 + q_2$ where 
\begin{equation*}
 q_1(t; \mathbf{x}, \mathbf{y}) = \text{det}(D_{y_j}^{\alpha_1 \ldots \alpha_j} 
 J_{x_i}^{-\alpha_1 \ldots 
 -\alpha_i} \phi_t(y_j - x_i))_{i, j = 1}^n
\end{equation*}
and $q_2 := q - q_1$. 
We first show that 
\begin{equation}
\label{initial_cond_conv}
 \lim_{t \rightarrow 0} \int f(\mathbf{y}) 
 e^{-\sum_i \alpha_i(y_i - x_i)} q_2(t; \mathbf{x}, \mathbf{y}) 
 d\mathbf{y} = 0.
\end{equation}
We observe that $q_2$ is a sum of products of factors where in each product 
there is at least one factor of the form 
\begin{equation}
\label{negligible_term_1}
 D_{y_j}^{\alpha_1 \ldots \alpha_j} 
 J_{x_i}^{-\alpha_1 \ldots -\alpha_i} \phi_t(x_i + y_j)
\end{equation}
for some $1 \leq i, j \leq n.$
For $\{y_1 \leq \epsilon\}$ the function $f$ takes the value zero and on 
$\{y_1 > \epsilon\}$ the factor  \eqref{negligible_term_1} is  approaching zero exponentially fast as $1/t \rightarrow \infty$. As a result \eqref{initial_cond_conv} 
holds.

We now consider $q_1$ and observe that the entries in the matrix
simplify
due to the translation invariance of the function: in particular
$D^{\alpha}_y J^{-\alpha}_x h(y - x) = 
h(y - x)$
for any smooth function $h$.
This means that the matrix in $q_1$ has diagonal entries 
\begin{equation*}
D_{y_j}^{\alpha_1 \ldots \alpha_j} J_{x_i}^{-\alpha_1 \ldots -\alpha_j}
 \phi_t (y_j-x_j) = \phi_t(y_j - x_i). 
\end{equation*}
Therefore the term corresponding to the 
identity permutation in the determinant of $q_1$ 
is a standard $n$-dimensional heat kernel. 
The remaining terms are negligible as in \cite{warren}. 
\end{proof}

The transition densities must satisfy the semigroup property and this suggests 
a generalisation of the Andr\'eief (or Cauchy-Binet) identity. This identity
states that 
for any functions $(f_i)_{i = 1}^n$ and $(g_i)_{i=1}^n$ the
convolution of two determinants is a single determinant,
\begin{eqnarray}
\label{andreief}
 \int_{W^n} \text{det}( 
 f_i(x_j))_{i, j = 1}^n\text{det}(
 g_j(x_i))_{i, j = 1}^n dx_1 \ldots dx_n 
 = \text{det}\bigg(
 \int_0^{\infty} f_i(x) g_j(x) dx\bigg)_{i, j =1}^n.
\end{eqnarray}
We prove a generalisation involving the inhomogeneous derivative and integral
operators, $J$ and $D$.
\begin{lemma}
\label{ibp_lemma_inhomogeneous}
Let $(f_i)_{i = 1}^n$ and $(g_j)_{j=1}^n$ be collections of infinitely differentiable
functions on $[0, \infty)$ such that $g_j$ has superexponential decay at infinity
for each $j = 1, \ldots, n$ 
while $f_i$ has at most exponential growth at infinity 
for each $i = 1, \ldots, n$.
\begin{enumerate}[(i)]
 \item For $k \geq 1$, let 
$g^{(-k)}(x) =  \int_x^{\infty} \frac{(x - u)^{k-1}}{(k-1)!}
 g(u) du$ and $f^{(k)}$ denote 
 the $k$-th derivatives of $f$. Then\begin{equation*}
\int_{W_n^+} \text{det}(f_{i}^{(j-1)}(x_j))_{i, j =1}^n 
\text{det}(g_{j}^{(-i+1)}(x_i))_{i, j = 1}^n dx_1 \ldots dx_n 
= \text{det}\left(\int_0^{\infty} f_{i}(x) g_{j}(x) dx \right)_{i, j = 1}^n
 \end{equation*}
 \item Let $D^{\alpha}, J^{\alpha}$ be defined as in equation (\ref{J_D})
 and assume $f_i(0) = 0$ for each $i = 1, \ldots, n$.
 Then\begin{eqnarray*}
 \int_{W_n^+} \text{det}\big(D^{\alpha_1, \ldots, \alpha_j} 
 f_i(x_j)\big)_{i, j = 1}^n\text{det}\big(
 J^{-\alpha_1, \ldots, -\alpha_i}
 g_j(x_i)\big)_{i, j = 1}^n dx_1 \ldots dx_n 
 = \text{det}\bigg(
 \int_0^{\infty} f_i(x) g_j(x) dx\bigg)_{i, j = 1}^n.
\end{eqnarray*}
\end{enumerate}
%
\end{lemma}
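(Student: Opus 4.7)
My plan is to deduce both parts of the lemma from the classical Andr\'eief identity together with a one-dimensional integration-by-parts identity that converts $D^{\alpha_1,\ldots,\alpha_k}f \cdot J^{-\alpha_1,\ldots,-\alpha_k}g$ into $fg$ inside an integral. I will focus on part (ii); part (i) follows by specialising to $\alpha_1 = \cdots = \alpha_n = 0$, in which case $D^{0,\ldots,0}$ reduces to ordinary differentiation and $J^{0,\ldots,0}$ to the Cauchy iterated-integration kernel $g \mapsto g^{(-k)}$ appearing in the statement.

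The key one-variable identity I need is
\[
\int_0^{\infty} D^{\alpha_1,\ldots,\alpha_k} f(x)\cdot J^{-\alpha_1,\ldots,-\alpha_k}g(x)\,dx \;=\; \int_0^{\infty} f(x)g(x)\,dx,
\]
which I would prove by induction on $k$, peeling off one parameter from both factors at each step. The single-step version rests on the direct calculation
\[
\int_0^{\infty} D^{\alpha}h(x) \cdot J^{-\alpha}k(x)\,dx \;=\; \bigl[h \cdot J^{-\alpha}k\bigr]_0^{\infty} + \int_0^{\infty} h(x)k(x)\,dx,
\]
which is a short integration by parts using the relation $D^{-\alpha}J^{-\alpha} = -\mathrm{Id}$ to absorb the $-\alpha h$ contribution. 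The boundary at infinity vanishes because $g$, and hence every $J^{-\alpha_\ell,\ldots,-\alpha_k}g$, has superexponential decay while $f$ and its $D$-images grow at most exponentially; the boundary at $0$ is killed by the hypothesis $f_i(0)=0$, exploiting the commutativity $D^\alpha D^\beta = D^\beta D^\alpha$ to choose an ordering of the elimination in which the remaining intermediate function still vanishes at $0$.

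With the one-variable identity in hand, I would expand both determinants in the integrand via the Leibniz formula and note that the product is symmetric under permutations of $(x_1, \ldots, x_n)$, so the integral over $W_n^+$ is $(n!)^{-1}$ times the integral over $[0,\infty)^n$ and the latter factorises into a product of one-dimensional integrals,
\[
\sum_{\pi,\tau\in S_n}\operatorname{sgn}(\pi\tau)\prod_{k=1}^n\int_0^\infty D^{\alpha_1,\ldots,\alpha_k}f_{\pi(k)}(x)\,J^{-\alpha_1,\ldots,-\alpha_k}g_{\tau(k)}(x)\,dx.
\]
By the key identity each inner factor equals $a_{ij}:=\int_0^\infty f_i g_j\,dx$, crucially independent of $k$; the permutation sum then collapses by the standard Andr\'eief reindexing $\rho = \tau\pi^{-1}$ to $n!\det(a_{ij})$, and the $n!$ cancels the factor from the Weyl chamber to yield the right-hand side. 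I expect the main obstacle to be the one-variable identity itself: the bare hypothesis $f_i(0)=0$ does not directly kill the boundary contributions $D^{\alpha_{\ell+1},\ldots,\alpha_k}f_i(0)$ that arise after the first integration by parts, so the argument really depends on using the commutativity of the $D^{\alpha_\ell}$ to eliminate them in the right order --- or, in the intended application to the semigroup property of $r_t$, on additional vanishing at $0$ inherited from the killed-Brownian kernel $\psi_t$ on $\partial W_n^+$.
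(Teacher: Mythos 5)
The approach you outline does not work for this lemma, and the reason is instructive: it breaks at precisely the two places where the paper's Weyl-chamber integration-by-parts is doing real work.

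First, the symmetrisation step is invalid. The integrand
\[
\det\bigl(D^{\alpha_1,\ldots,\alpha_j}f_i(x_j)\bigr)\,\det\bigl(J^{-\alpha_1,\ldots,-\alpha_i}g_j(x_i)\bigr)
\]
is \emph{not} a symmetric function of $(x_1,\ldots,x_n)$. In the classical Andr\'eief setting $\det(f_i(x_j))\det(g_j(x_i))$, swapping two of the $x$'s permutes two columns of the first determinant and two rows of the second, so the two sign changes cancel. Here the entries are $\widetilde f_{ij}(x_j):= D^{\alpha_1,\ldots,\alpha_j}f_i(x_j)$ and $\widetilde g_{ji}(x_i):= J^{-\alpha_1,\ldots,-\alpha_i}g_j(x_i)$; the operator string $D^{\alpha_1,\ldots,\alpha_j}$ depends on the column index $j$ itself, so replacing $x_j$ by $x_{\sigma(j)}$ does not permute columns --- it produces entries $D^{\alpha_1,\ldots,\alpha_j}f_i(x_{\sigma(j)})$ which are not a relabelling of the originals. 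Consequently $\int_{W_n^+}$ is not $(n!)^{-1}\int_{[0,\infty)^n}$, and the factorisation into a product of one-dimensional integrals, which is the heart of your argument, is unavailable.

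Second, even granting that factorisation, the one-variable identity you propose,
\[
\int_0^{\infty} D^{\alpha_1,\ldots,\alpha_k} f(x)\,J^{-\alpha_1,\ldots,-\alpha_k}g(x)\,dx \;=\; \int_0^{\infty} f(x)g(x)\,dx,
\]
is false under the hypothesis $f(0)=0$ alone for $k\geq 2$. Peeling off $\alpha_1$ first gives
\[
\int_0^\infty D^{\alpha_1,\ldots,\alpha_k}f\cdot J^{-\alpha_1,\ldots,-\alpha_k}g\,dx
= \int_0^\infty D^{\alpha_2,\ldots,\alpha_k}f\cdot J^{-\alpha_2,\ldots,-\alpha_k}g\,dx
 - \bigl(D^{\alpha_2,\ldots,\alpha_k}f\bigr)(0)\,\bigl(J^{-\alpha_1,\ldots,-\alpha_k}g\bigr)(0),
\]
and $D^{\alpha_2,\ldots,\alpha_k}f(0)$ involves $f^{(k-1)}(0),\ldots,f'(0)$, none of which need vanish. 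Taking $f(x)=x$, $k=2$ produces a nonzero defect $-J^{-\alpha_1,-\alpha_2}g(0)$. Commutativity of the $D^{\alpha_\ell}$ does not rescue this: in every ordering, the boundary evaluation after the first step is $D^{(\text{length }k-1)}f(0)$, and $f(0)=0$ kills only the top-order contribution. You flag this worry in your last paragraph but do not resolve it, and it cannot be resolved within a one-dimensional factorisation.

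These two obstructions are not coincidental. In the paper's proof, the iterated integration by parts is carried out \emph{inside the $n$-fold Weyl-chamber integral}, and the boundary terms appear at faces $x_j = x_{j-1}$ (killed by two equal rows in one of the determinants), at $x_n=\infty$ (killed by the growth/decay conditions), and at $x_1=0$ only in the very last step (killed by $f_i(0)=0$). The specific order of pairings $(x_n,\alpha_n),(x_{n-1},\alpha_{n-1}),\ldots,(x_1,\alpha_1),(x_n,\alpha_{n-1}),\ldots$ is chosen precisely so that at each stage the surviving boundary face has a vanishing determinant. It is the determinantal cancellation at coincident coordinates --- a genuinely $n$-dimensional phenomenon --- that absorbs the boundary terms your one-variable identity cannot avoid. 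Once the integrand has been reduced in this way to $\det(f_i(x_j))\det(g_j(x_i))$ on $W^n$, the classical Andr\'eief identity applies and the proof is complete. Any correct proof must keep the determinantal structure intact during the integration by parts rather than factorise first.

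Finally, a small remark: part (i) is not literally the $\alpha\equiv 0$ case of part (ii), as the paper notes --- in (ii) the integration by parts in $x_1$ uses $f_i(0)=0$, whereas (i) stops short of that step and therefore drops the hypothesis. Your plan to deduce (i) by specialisation would import an unnecessary assumption.
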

We note that (i) is not quite the homogeneous case of (ii) because (ii) involves 
applying integration by parts to $x_1$, whereas (i) does not. We have not intended 
to make the conditions on $g$ optimal and have simply chosen some conditions 
which are sufficient for our purposes.
\begin{proof}
We start with the proof of (ii).
We observe that for $0 \leq x < z$,
\begin{equation}
\label{ibp}
 f(z) J^{-\alpha} g(z) - f(x) J^{-\alpha} g(x) = \int_x^z D^{\alpha} f(y) J^{-\alpha}
 g(y) dy - \int_x^z f(y) g(y) dy.
\end{equation}

%
%
We use this formula iteratively to prove that 
\begin{eqnarray}
\label{ibp_eq}
 \int_{W^+_n} \text{det}\big(D^{\alpha_1, \ldots, \alpha_j} 
 f_i(x_j)\big)\text{det}\big(
 J^{-\alpha_1, \ldots, -\alpha_i}
 g_j(x_i)\big) dx_1 \ldots dx_n 
 = \int_{W^+_n} \text{det}\big(
 f_i(x_j)\big)\text{det}\big(
 g_j(x_i)\big) dx_1 \ldots dx_n. 
\end{eqnarray}
For the first step we use a Laplace expansion of the determinants appearing on the left hand side and then apply
equation (\ref{ibp}) with  parameter
$\alpha=\alpha_n$ and integrating with respect to $x_n$ from $x_{n-1}$ to $\infty$. Then we reconstruct the resulting expressions as determinants. 
This gives three terms. The first term is
\begin{equation*}
 \int_{W^+_n} \text{det}(F_{ij}(x_j))_{i, j= 1}^n \text{det}(G_{ij}(x_i))_{i, j =1}^n
 dx_1 \ldots dx_n
\end{equation*}
where $ F_{ij}(x_j) = D^{\alpha_1 \ldots \alpha_j} f_{i}(x_j)$
for all $1 \leq i \leq n$ and $1 \leq j \leq n-1$, 
$F_{in}(x_n) = D^{\alpha_1 \ldots \alpha_{n-1}} f_{i}(x_n)$ for all 
$1 \leq i \leq n$, $G_{ij}(x_i) = J^{-\alpha_1 \ldots - \alpha_i} g_j(x_i)$
for all $1 \leq i \leq n-1$ and $1 \leq j \leq n$,
and $G_{nj}(x_n) = J^{-\alpha_1 \ldots - \alpha_{n-1}} g_j(x_{n})$
for all $1 \leq j \leq n$.
The other two terms are boundary terms given by
the following expression evaluated at $x_n = x_{n-1}$ 
and $x_{n} = \infty$,
\begin{equation*}
 \int_{W^+_{n-1}} \text{det}(A_{ij}(x_j))_{i, j= 1}^n \text{det}(B_{ij}(x_i))_{i, j =1}^n
 dx_1 \ldots dx_{n-1}
\end{equation*}
where  $A_{ij}(x_j) = D^{\alpha_1 \ldots \alpha_j} f_{i}(x_j)$
for all $1 \leq i \leq n, 1 \leq j \leq n-1$,
$A_{in} = D^{\alpha_1 \ldots \alpha_{n-1}} f_{i}(x_n)$ for all $1 \leq i \leq n$
and $B_j(x_i) = J^{-\alpha_1 \ldots - \alpha_i} g_j(x_i)$ for all $1 \leq i, j 
               \leq n$.   
These boundary terms are both zero: the determinant of $A_{ij}$ vanishes at 
$x_{n} = x_{n-1}$, because two columns are equal,  and we obtain zero 
at infinity by virtue of  the growth and decay conditions imposed on $f$ and 
$g$.
               
The general structure becomes clear after the second step. 
We perform the same procedure with the integration by parts (\ref{ibp}) 
with parameter $\alpha=\alpha_{n-1}$, and integrating with respect to the  variable $x_{n-1}$ between $x_{n-2}$ and $x_n$.  
We obtain three terms as above with  
\begin{align*}
  F_{ij}(x_j) & =  \begin{cases}
               D^{\alpha_1 \ldots \alpha_j} f_{i}(x_j) & \quad \qquad \text{ for all } 1 \leq i 
               \leq n,  \qquad 1 \leq j \leq n-2 \\
               D^{\alpha_1 \ldots \alpha_{j-1}} f_{i}(x_j) 
              & \quad \qquad \text{ for all } 1 \leq i 
               \leq n, \qquad n-1 \leq j \leq n 
               \end{cases}\\
 G_{ij}(x_i)  & = \begin{cases}
            J^{-\alpha_1 \ldots - \alpha_i} g_j(x_i) & \qquad \text{ for all } 1 \leq i 
               \leq n-2, \qquad 1 \leq j \leq n \\
                 J^{-\alpha_1 \ldots - \alpha_{i-1}} g_j(x_{i}) & \qquad \text{ for all } 
                n-1 \leq i \leq n,  \qquad 1 \leq j \leq n-1.
           \end{cases}
\end{align*}

and the boundary terms evaluated at $x_{n-1} = x_{n-2}$ and $x_{n-1} = x_n$ with 
               \begin{align*}
 A_{ij}(x_j) &= \begin{cases}
               D^{\alpha_1 \ldots \alpha_j} f_{i}(x_j) & \qquad \text{ for all } 1 \leq i 
               \leq n, \qquad 1 \leq j \leq n-2 \\
               D^{\alpha_1 \ldots \alpha_{j-1}} f_{i}(x_j) & \qquad \text{ for all } 1 \leq i 
               \leq n, \qquad n-1 \leq j \leq n
              \end{cases}\\
B_{ij}(x_i) & =  \begin{cases}
          J^{-\alpha_1 \ldots - \alpha_i} g_j(x_i) & \quad \text{ for all } 1 \leq i 
               \leq n-1,\qquad 1 \leq j \leq n  \\    
               J^{-\alpha_1 \ldots - \alpha_{i-1}} g_j(x_i) & \quad \text{ for } i = n, 
              \qquad 1 \leq j \leq n.
                 \end{cases}
\end{align*}
The determinant of $A_{ij}$ will vanish at $x_{n-1} = x_{n-2}$ while the determinant
of $B_{ij}$ will vanish at $x_{n-1} = x_n$. Therefore both boundary terms vanish. 
Equation (\ref{ibp_eq}) now follows by iterating this procedure. The order of the 
integration by parts with respect to the variables 
and choice of the parameter  $\alpha$ in (\ref{ibp}) is important to ensure 
there are no boundary terms and is the following:
$(x_n, \alpha_n), (x_{n-1}, \alpha_{n-1}), \ldots,
(x_1, \alpha_1)$ then $(x_n, \alpha_{n-1}), (x_{n-1}, \alpha_{n-2}), \ldots,
(x_2, \alpha_1)$ until finally $(x_n, \alpha_1)$. 
In the integration by parts with respect to $(x_1, \alpha_1)$ there is  
a boundary term at zero, however, this is also zero 
due to the constraint that $f_i(0) = 0$ for each 
$i = 1, \ldots, n$.

Finally part (ii) of the lemma follows from applying the Andr\'eief identity \eqref{andreief} to the righthand side of  equation $(\ref{ibp_eq})$. 
Part (i) of the Lemma is the same except that there is no integration by 
parts in $x_1$ so that the condition $f_i(0) = 0$ is not required.
\end{proof}

\subsection{Invariant Measures}
\begin{lemma}[Dupuis and Williams \cite{dupuis_williams}] 
\label{exist_unique}
 Let $(Y_1(t), \ldots, Y_n(t))_{t \geq 0}$ be the system of reflected Brownian 
 motions with a wall given in equation (\ref{sdes}). 
  Then $Y$ has a unique invariant measure denoted $\pi$ and satisfies
$\norm{P_t(x, \cdot) - \pi} \rightarrow 0$ for all $x \in W_n^+$
where $\norm{\mu} = \sup_{\lvert g \rvert \leq 1} \lvert \int \mu(dy)
 g(y) \rvert$ is the total variation distance of $\mu$.
\end{lemma}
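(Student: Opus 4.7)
The plan is to reduce the system $Y$ to a semimartingale reflecting Brownian motion (SRBM) on the nonnegative orthant $\mathbb{R}_+^n$ and then appeal directly to the stability theorem of Dupuis and Williams \cite{dupuis_williams}, which supplies existence, uniqueness and total variation convergence in one go once its hypotheses are verified.

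First I would pass to the gap coordinates $U_1 = Y_1$ and $U_i = Y_i - Y_{i-1}$ for $2 \leq i \leq n$. The map $Y \mapsto U$ is a linear bijection from $W_n^+$ onto $\mathbb{R}_+^n$ intertwining the two Markov processes, so it suffices to prove the conclusion for $U$. From the SDEs \eqref{sdes} one reads off
\[
dU_t \;=\; \mu\, dt + A\, dB_t + R\, dL_t,
\]
with drift $\mu = (-\alpha_1,\, \alpha_1 - \alpha_2,\, \ldots,\, \alpha_{n-1} - \alpha_n)$, Brownian coefficient $A$ lower bidiagonal with $1$'s on the diagonal and $-1$'s on the subdiagonal (so $AA^T$ is non-degenerate), and reflection matrix $R$ of the same lower bidiagonal shape. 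The sign convention for $R$ is forced because the push $dL_i$ drives $Y_i$ up and hence drives $U_{i+1} = Y_{i+1} - Y_i$ down.

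Next I would verify the two hypotheses of the Dupuis--Williams theorem. The reflection matrix $R$ has unit diagonal and nonpositive off-diagonal entries, so it is a lower triangular $M$-matrix; every principal submatrix is also an $M$-matrix and thus $R$ is completely-$\mathcal{S}$, which guarantees that the SRBM exists as a strong Markov process. For the fluid stability hypothesis, since $R$ is an $M$-matrix it suffices to show that $R^{-1}\mu$ has all strictly negative components. Writing $R^{-1} = I + P + P^2 + \cdots$ for the nilpotent $P = I - R$ identifies $R^{-1}$ as the lower triangular matrix of $1$'s on and below the diagonal, and the telescoping identity
\[
(R^{-1}\mu)_i \;=\; \sum_{j=1}^i \mu_j \;=\; -\alpha_1 + \sum_{j=2}^i (\alpha_{j-1}-\alpha_j) \;=\; -\alpha_i
\]
is strictly negative by the hypothesis $\alpha_i > 0$.

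Given these two conditions, the stability theorem of \cite{dupuis_williams} furnishes a unique invariant probability measure $\pi_U$ for $U$ with $\lVert P_t^U(u, \cdot) - \pi_U \rVert \to 0$ in total variation for every starting point $u \in \mathbb{R}_+^n$; pushing forward along the bijection yields the stated conclusion for $Y$, with $\pi$ the image of $\pi_U$. The one place where there is anything to compute is $R^{-1}\mu$, and the answer is the telescope above; consequently no real obstacle arises, and the substantive content of the lemma sits in the cited paper. (As a sanity check, the convergence in distribution established from $Y(0) = 0$ in Proposition \ref{time_change_refl}(i) is consistent with the unique limit $\pi$ produced here.)
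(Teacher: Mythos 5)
The paper gives no proof of this lemma beyond the citation to Dupuis and Williams; your argument supplies exactly the verification that the citation implicitly requires. Your reduction to the gap process $U = (Y_1,\,Y_2-Y_1,\,\ldots,\,Y_n-Y_{n-1})$ as an SRBM on $\mathbb{R}_+^n$, the identification of the reflection matrix $R$ as a unit lower-bidiagonal $M$-matrix (hence completely-$\mathcal{S}$), and the telescoping computation $R^{-1}\mu = (-\alpha_1,\ldots,-\alpha_n)$, which is strictly negative since each $\alpha_i>0$, are all correct, so the hypotheses of the Dupuis--Williams stability theorem are met and the conclusion follows by pushing forward along the bijection $Y\mapsto U$.
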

There are stronger results in the literature including convergence rates: 
for example Theorem 4.12 of \cite{budhijara_lee}
can be applied to prove $V$-uniform ergodicity for $Y$.
For the process where all particles are
started from the origin, 
the convergence in distribution is contained in Proposition \ref{time_change_refl}.

\begin{proposition}
\label{invariant_measure}
\begin{enumerate}[(i)]
 \item When $\alpha_1 = \ldots = \alpha_n = 1$, then $(Y_1^*, \ldots, Y_n^*)$ has 
 a density with respect to Lebesgue measure on $W_n^+$ given by
\begin{equation}
\label{eq_psi}
\bar{\pi}(x_1, \ldots, x_n) = \text{det}(f_{i-1}^{(j-1)}(x_j))_{i, j = 1}^n
\end{equation}  
with the  sequence of functions $(f_i)_{i \geq 0}$ 
defined inductively as follows: 
\begin{align}
\label{f_defn}
f_0(x) & = 2e^{-2x}\\
\mathcal{G}^* f_{i+1} & = f_i \text{ and } f_i'(0) = f_i(0) = 0
\text{ for } i \geq 1
\end{align}
where $\mathcal{G}^* = \frac{1}{2}\frac{d^2}{dx^2} + \frac{d}{dx}$.
\item  
When the drifts are distinct, $(Y_1^*, \ldots, Y_n^*)$ has 
 a density with respect to Lebesgue measure on $W_n^+$ given by
 \begin{equation*}
  \pi(x_1, \ldots, x_n) = \frac{1}{\prod_{i < j} (\alpha_i - \alpha_j)}e^{- \sum_{i=1}^n \alpha_i x_i} \text{det}(D^{\alpha_1 \ldots \alpha_j}
  f_i(x_j))_{i, j = 1}^n
 \end{equation*}
 where $f_i(x) = e^{\alpha_i x} - e^{-\alpha_i x}$.
\end{enumerate}
\end{proposition}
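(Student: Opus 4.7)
The plan is to verify that $\pi$ (resp.\ $\bar\pi$) is the density of an invariant probability measure for the Markov semigroup governing $(Y_1,\ldots,Y_n)$; by Lemma \ref{exist_unique} this will identify it with the law of $(Y_1^*,\ldots,Y_n^*)$. Concretely, I would verify the stationarity equation $\int_{W_n^+} \pi(x) r_t(x,y)\, dx = \pi(y)$, where $r_t$ is the transition density from Proposition \ref{transition_density_Brownian}, and then separately check normalization. The whole scheme rests on recognizing that the exponential prefactors in $\pi$ and $r_t$ match up, so that the integrand factors as a constant times a product of two determinants to which Lemma \ref{ibp_lemma_inhomogeneous} can be applied.

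For case (ii), after the cancellation the integrand is a constant multiple of $e^{-\sum \alpha_i y_i - \sum \alpha_i^2 t/2} \det(D_{x_j}^{\alpha_1\ldots\alpha_j} f_i(x_j)) \det(D_{y_j}^{\alpha_1\ldots\alpha_j} J_{x_i}^{-\alpha_1\ldots-\alpha_i} \psi_t(x_i,y_j))$. Since $f_i(x) = e^{\alpha_i x} - e^{-\alpha_i x}$ satisfies $f_i(0) = 0$, Lemma \ref{ibp_lemma_inhomogeneous}(ii) reduces the $x$-integral to $\det(\int_0^\infty f_i(x) D_{y_j}^{\alpha_1\ldots\alpha_j}\psi_t(x,y_j)\,dx)$. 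Because the odd extension of $f_i$ to $\mathbb{R}$ is an eigenfunction of the free Brownian generator, and the killed kernel satisfies $\psi_t(x,y) = \phi_t(y-x) - \phi_t(y+x)$, one gets $\int_0^\infty f_i(x) \psi_t(x,y)\,dx = \mathbb{E}_y[2\sinh(\alpha_i B_t)] = e^{\alpha_i^2 t/2} f_i(y)$. Pulling $e^{\alpha_i^2 t/2}$ out of row $i$ exactly cancels $e^{-\sum \alpha_i^2 t/2}$, leaving $\pi(y)$.

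For case (i) the same plan applies with the simpler Lemma \ref{ibp_lemma_inhomogeneous}(i) (no exponential twists). The relevant one-dimensional integrals are $I_i(t,y) := \int_0^\infty f_i(x) \eta_t^{(-1)}(x,y)\,dx$, and I would show by induction that $I_i(t,y) = \sum_{k=0}^i \tfrac{t^k}{k!} f_{i-k}(y)$. The base case $I_0 = f_0$ holds because $f_0$ is the invariant density of reflecting Brownian motion with drift $-1$. The inductive step comes from the identity $\partial_t I_i = I_{i-1}$, derived by integration by parts: writing $\mathcal{G}_x = \tfrac12 \partial_x^2 - \partial_x$, the boundary term from $\int_0^\infty f_i \mathcal{G}_x \eta_t dx$ equals $\bigl(\tfrac12 f_i'(0) + f_i(0)\bigr)\eta_t(0,y)$ after using the reflecting condition $\partial_x \eta_t^{(-1)}(0,y) = 0$, and this vanishes by the imposed $f_i(0) = f_i'(0) = 0$ for $i \geq 1$ and by the direct check $\tfrac12 f_0'(0) + f_0(0) = -2 + 2 = 0$ for the base case. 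Substituting back, the matrix $(\sum_{k=0}^{i-1} \tfrac{t^k}{k!} f_{i-1-k}^{(j-1)}(y_j))_{i,j}$ factors as $L N$ with $L$ a unit lower-triangular Toeplitz matrix and $N$ the defining matrix of $\bar\pi(y)$; since $\det L = 1$, the stationarity identity follows.

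Since the process is irreducible and its invariant probability measure is unique, the stationary signed measure produced is a scalar multiple of the true invariant density. The normalization constant would then be pinned down either by direct evaluation of $\int_{W_n^+} \pi\, dx$ via Andr\'eief's identity applied to $\det(D^{\alpha_1\ldots\alpha_j} f_i(x_j))$ with the exponentials absorbed into the columns, or by matching the marginal density of $x_n$ against the known distribution of $Y_n^*$ from Proposition \ref{time_change_refl}. The main obstacle is the careful bookkeeping of boundary terms in the recursive integration by parts of case (i), which relies essentially on the \emph{double} boundary conditions $f_i(0) = f_i'(0) = 0$ built into the definition of $f_i$; getting the base case $i=0$ to align without having these conditions is what forces the specific choice $f_0(x) = 2e^{-2x}$.
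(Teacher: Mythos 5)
Your proposal matches the paper's proof of Proposition~\ref{invariant_measure} essentially step for step. Both cases reduce the stationarity check $\int_{W_n^+}\pi(\mathbf{x})r_t(\mathbf{x},\mathbf{y})\,d\mathbf{x}=\pi(\mathbf{y})$ to a single determinant of one-dimensional integrals via Lemma~\ref{ibp_lemma_inhomogeneous}; case~(ii) evaluates each entry through $\int_0^\infty f_i(x)\psi_t(x,y)\,dx=e^{\alpha_i^2t/2}f_i(y)$ (the paper phrases this as a martingale identity for killed Brownian motion, you as the odd extension being an eigenfunction of the free heat semigroup---same computation); case~(i) evaluates through the induction $\partial_t I_i=I_{i-1}$ obtained by integration by parts with the boundary term killed by the imposed conditions on $f_i$ at zero, giving $I_i(t,y)=\sum_{k=0}^i\tfrac{t^k}{k!}f_{i-k}(y)$ and then row operations ($\det L=1$). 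All of this is correct and is precisely the paper's argument.

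The one place you diverge, and where your proposal is incomplete as written, is positivity and normalization. The paper does not attempt to normalize by direct integration: it isolates the assertion that $\pi$ and $\bar\pi$ are non-negative and integrate to one as Lemma~\ref{positivity}, and defers its proof to Section~\ref{lpp}, where $\pi$, $\bar\pi$ are shown to equal the density of the point-to-line last passage vector $(G(1,n),\ldots,G(1,1))$ built up by a separate induction. Your alternative---invoking uniqueness of the invariant probability measure to conclude the ``stationary signed measure produced is a scalar multiple of the true invariant density''---is not justified as stated: Lemma~\ref{exist_unique} gives uniqueness of the invariant \emph{probability} measure, which does not by itself preclude other stationary signed densities, so you would still need to establish non-negativity of $\pi$, $\bar\pi$ directly before a normalization argument (Andr\'eief or marginal matching) can finish the job. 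A minor remark: only the Robin-type condition $\tfrac12 f_i'(0)+f_i(0)=0$ is actually required for the integration-by-parts boundary term to vanish; the paper notes (Remark~(i) after the Proposition) that the stronger $f_i(0)=f_i'(0)=0$ is a normalization choice that changes $\bar\pi$ only by row operations.
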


We make two remarks:
\begin{enumerate}[(i)]
 \item For equal drifts the initial function $f_0$ satisfies
$\mathcal{G}^* f_0 = 0$ and $f_0'(0) + 2f_0(0) = 0$. 
The functions $f_i$ could also have been defined so as to satisfy the 
boundary condition $f_i'(0) + 2f_i(0) = 0$ for $i \geq 1$, however, 
$\psi$ would be unchanged as we can use row operations to add on
constant multiples of $f_0$.
\item When the drifts are distinct, Dieker and Moriarty \cite{dieker2009}
show the invariant measure is a sum of 
exponential random variables and this sum can be calculated explicitly 
for small values of $n$. However, when the drifts 
coincide Proposition \ref{invariant_measure} part (i) shows the invariant measure contains 
polynomial prefactors in the style of repeated eigenvalues.
\end{enumerate}

\begin{lemma}
\label{positivity}
 The functions $\bar{\pi}$ and $\pi$ are positive on $W_n^+$ and satisfy $\int_{W_n^+} \bar{\pi} = \int_{W_n^+} \pi = 1$. 
\end{lemma}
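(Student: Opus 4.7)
Proof plan:

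The lemma asserts that $\bar\pi$ and $\pi$ from Proposition \ref{invariant_measure} are non-negative on $W_n^+$ and integrate to $1$. I will treat the distinct-drifts density $\pi$ in detail, and obtain $\bar\pi$ by the limit $\alpha_i \to 1$.

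\textbf{Integration to 1.} Starting from
\[
\int_{W_n^+}\pi\,dx = \frac{1}{\prod_{i<j}(\alpha_i-\alpha_j)}\int_{W_n^+} e^{-\sum_i\alpha_i x_i}\det\bigl(D^{\alpha_1,\ldots,\alpha_j}f_i(x_j)\bigr)\,dx,
\]
the plan is to apply the generalized Andr\'eief identity Lemma \ref{ibp_lemma_inhomogeneous}(ii). The functions $f_i(x)=e^{\alpha_i x}-e^{-\alpha_i x}$ satisfy $f_i(0)=0$ as required. The key is to realize the exponential weight $e^{-\sum_i\alpha_ix_i}$ (up to an explicit constant) as $\det\bigl(J^{-\alpha_1,\ldots,-\alpha_i}g_j(x_i)\bigr)$ for auxiliary $g_j$. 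Taking $g_j(x)=e^{-\gamma_j x}$ with parameters $\gamma_j>\max_k\alpha_k$ for convergence of the $J$-operators, a direct calculation yields $J^{-\alpha_1,\ldots,-\alpha_i}g_j(x_i)=e^{-\gamma_j x_i}/\prod_{k=1}^i(\gamma_j-\alpha_k)$. After an appropriate limit of the $\gamma_j$'s (with analytic continuation around the singularities), the determinant reproduces the exponential factor, and the Andr\'eief identity collapses the multiple integral into the Cauchy-type determinant $\det\bigl(\int_0^\infty f_i g_j\,dx\bigr)$, whose value combines with the prefactor $1/\prod_{i<j}(\alpha_i-\alpha_j)$ and the column/row constants to give exactly $1$.

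\textbf{Positivity.} The idea is to identify $\pi$ as the density of a genuine $W_n^+$-valued random vector. By Proposition \ref{time_change_refl} the invariant measure of $Y$ exists as the distribution of the almost-sure limit $(Y_1^*, \ldots, Y_n^*)$, hence is a probability measure. Concretely, one shows that $r_t(x_0, y)\to\pi(y)$ pointwise as $t\to\infty$ for a fixed starting point $x_0$ (using the explicit formula from Proposition \ref{transition_density_Brownian} and evaluating the long-time asymptotics of the matrix entries), which yields non-negativity of $\pi$ as the pointwise limit of non-negative transition densities. Combined with the integrability established above, Scheff\'e's lemma confirms $\pi$ is the invariant probability density of $Y$, consistent with uniqueness from Lemma \ref{exist_unique}.

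\textbf{Equal-drift case and main obstacle.} For $\bar\pi$, the limit $\alpha_i\to 1$ is indeterminate in both the prefactor $1/\prod_{i<j}(\alpha_i-\alpha_j)$ (divergent) and the determinant (simultaneously vanishing); they cancel via an iterated l'H\^opital / divided-difference argument, where row $i$ is replaced by its $(i-1)$-th derivative in the $\alpha$-parameter, producing the Wronskian-type $\det(f_{i-1}^{(j-1)}(x_j))$ with the inductively defined $(f_i)$, and integrability/positivity transfer from the distinct-drift case. The principal technical obstacle is that $J^{-\alpha_i}e^{-\gamma x}$ diverges for $\gamma\leq\alpha_i$, so Lemma \ref{ibp_lemma_inhomogeneous}(ii) cannot be applied with the naive choice $g_j = e^{-\alpha_j x}$; the regularization by distinct $\gamma_j>\max_k\alpha_k$ with a careful final limit is essential, and verifying that the Cauchy-type singularities cancel the prefactor $1/\prod_{i<j}(\alpha_i-\alpha_j)$ exactly requires delicate bookkeeping of signs and pole orders.
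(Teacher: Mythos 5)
Your plan takes a genuinely different route from the paper's, and both halves of it are left incomplete at exactly the points where the difficulty lives.

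The paper's proof of Lemma~\ref{positivity} is almost trivial once the hard work of Section~\ref{lpp} has been done: the inductive argument in the proof of Theorem~\ref{equality_law} (using Proposition~\ref{transition_densities_lpp} together with the Andr\'eief-type Lemma~\ref{ibp_lemma_lpp}(ii)) shows directly that the density of the almost surely finite random vector $(G(1,n),\ldots,G(1,1))$ of point-to-line last passage times is $\pi$, resp.\ $\bar\pi$. Positivity and $\int_{W_n^+}\pi = \int_{W_n^+}\bar\pi = 1$ are then automatic. There is no separate computation to do; the lemma comes for free from the last-passage side of the correspondence, which is the very reason the paper defers the proof to Section~\ref{lpp}.

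Your proposal never touches the LPP side, and both of its ingredients are sketches rather than proofs, at the exact spots where they are hard. For the normalisation, you would need to exhibit $e^{-\sum_i\alpha_i x_i}$ (or the whole product density) as $\det\bigl(J^{-\alpha_1,\ldots,-\alpha_i}g_j(x_i)\bigr)$ for admissible $g_j$; with $g_j(x)=e^{-\gamma_j x}$ and $\gamma_j>\max_k\alpha_k$ this gives a genuine Cauchy-type determinant (the entries of $\int_0^\infty f_i g_j$ are $2\alpha_i/(\gamma_j^2-\alpha_i^2)$), but the final limit $\gamma_j\downarrow\alpha_j$ has poles of mixed orders in the prefactors $\prod_{k\le i}(\gamma_j-\alpha_k)$ and it is not at all clear that the determinant concentrates on the identity permutation in a way that reproduces $\prod_i e^{-\alpha_i x_i}$; you flag this as "delicate bookkeeping" but you have not shown it closes. (Also note that $e^{-\gamma_j x}$ only has exponential, not superexponential, decay, so Lemma~\ref{ibp_lemma_inhomogeneous} as stated does not literally apply and one would need to re-justify its hypotheses.) For positivity, you propose to establish the long-time pointwise limit $r_t(x_0,\cdot)\to\pi$ from the explicit determinantal formula of Proposition~\ref{transition_density_Brownian}; this is a substantial asymptotic analysis of an $n\times n$ determinant of $D$- and $J$-transformed heat kernels that you do not carry out, and you cannot instead invoke Proposition~\ref{invariant_measure} to identify the limit, because Proposition~\ref{invariant_measure} itself relies on Lemma~\ref{positivity} to conclude that $\pi$ is the invariant probability density (so that route would be circular). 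Finally, the equal-drift case via an iterated l'H\^opital/divided-difference limit from distinct $\alpha_i$'s is plausible but again only asserted. The main missed idea is structural: positivity and normalisation should be read off from the LPP realisation rather than re-derived on the Brownian side.
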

We will prove this in Section \ref{lpp} and for the moment prove 
Theorem \ref{equality_law} assuming this Lemma.

\begin{proof}[Proof of Proposition \ref{invariant_measure}]
In the case of equal rates we apply part (i) of Lemma \ref{ibp_lemma_inhomogeneous} 
to calculate 
the convolution between the proposed invariant measure and the transition densities 
from Proposition \ref{transition_density_Brownian}. 
The functions $f_i$ and $\eta$ satisfy the growth and decay 
conditions at infinity for Lemma \ref{ibp_lemma_inhomogeneous} 
and this shows that
\begin{equation*}
 \int_{W_n^+} \bar{\pi}(\mathbf{x}) \bar{r}_t(\mathbf{x}, \mathbf{y})  d\mathbf{x}
 = \text{det}\left(D^{(j-1)}_{y_j}\int_0^{\infty} f_{i-1}(x) 
 \eta_t^{(-1)}(x, y_j) dx\right)_{i, j = 1}^n
\end{equation*}
where $D^{(j-1)}$ denotes the $j$-th iterated concatenation of $D^0$ and 
$ \eta_t^{(-1)}$ is the transition density of reflected Brownian motion 
with drift $-1$.
Fixing $y$, we use the notation \begin{equation*}
     (f_i, \eta_t^{(- 1)}) = \int_0^{\infty} f_i(x) 
     \eta_t^{(-1)}(x, y) dx.
    \end{equation*}
Let $\mathcal{G} = \frac{1}{2}\frac{d^2}{dx^2} - \frac{d}{dx}$ and then for $k\geq 1$, since $ \frac{d}{dt}\eta_t^{(-1)}=\mathcal{G}\eta_t^{(-1)}$,
\begin{equation*}
 \frac{d}{dt}(f_k, \eta_t^{(-1)})
 = (f_k, \mathcal{G} \eta_t^{(-1)}) = (\mathcal{G}^*
 f_k, \eta_t^{(-1)})
 = (f_{k-1}, \eta_t^{(-1)}).
\end{equation*}
The step $ (f_k, \mathcal{G} \eta_t^{(-1)}) 
= (\mathcal{G}^* f_k, \eta_t^{(-1)})$ 
follows from integrating by parts where
the boundary terms are given by $f_k(x) \frac{d}{dx} 
\eta_t^{(-1)}(x, y)$ and 
$\eta_t^{(-1)}(x, y) (\frac{df_k}{dx} + 2f_k(x))$
each evaluated at zero and infinity. The boundary terms all equal to 
zero by the boundary conditions on $\eta$ and $f_k$.
Integrating in $t$, 
\begin{equation*}
 (f_k, \eta_t^{(-1)}) = f_k(y)+ \int_0^t   (f_{k-1}, \eta_t^{(-1)}) ds\end{equation*}
and iterating this gives, since $ (f_0, \eta_t^{(-1)})=f_0(y)$,
\begin{equation*}
 (f_k, \eta_t^{(-1)}) = \frac{t^k}{k!} f_0(y) + \ldots + f_{k}(y).
\end{equation*}
Thus the functions $f_k$ are  invariant  under the action of the $\eta_t^{(-1)}$ 
modulo multiples of $f_0, \ldots, f_{k-1}$.
Consequently, for any $t > 0$ we can  apply row operations to obtain
\begin{equation*}
 \int_{W_n^+} \bar{\pi}(\mathbf{x}) \bar{r}_t(\mathbf{x}, \mathbf{y})  d\mathbf{x} = 
 \text{det}(f_{i-1}^{(j-1)}(y_j))_{i,j=1}^n
 = \bar{\pi}(\mathbf{y}).
\end{equation*}

In the case when the drifts are not equal we apply
Lemma \ref{ibp_lemma_lpp} to express the convolution 
of our proposed invariant measure and the transition density from Proposition 
\ref{transition_density_Brownian} as 
a single determinant,
\begin{equation*}
 \int \pi(\mathbf{x}) r_t(\mathbf{x, y}) d\mathbf{x}
 = e^{-\sum_{i=1}^n \alpha_i y_i} \text{det}\left(D_y^{\alpha_1 \ldots \alpha_j}
 \int_0^{\infty} f_i(x) \psi_t(x, y_j) e^{-\alpha_i^2 t/2} dx
 \right)_{i, j = 1}^n.
\end{equation*}
The conditions for Lemma \ref{transition_density_Brownian} are satisfied because
$f_i(0) = 0$ for each $i = 1, \ldots, n$ and the conditions on the growth and decay of $f_i$ and $\psi$ at 
infinity are satisfied. 
We have 
\begin{equation*}
  \int_0^{\infty} f_i(x) \psi_t(x, y) e^{-\alpha_i^2 t/2} dx
  = f_i(y)
\end{equation*}
and therefore 
\begin{equation*}
 \int \pi(\mathbf{x}) r_t(\mathbf{x, y}) d\mathbf{x}
 = \frac{1}{\prod_{i < j} (\alpha_i - \alpha_j)}e^{-\sum_{i=1}^n \alpha_i y_i} \text{det}\left(D_y^{\alpha_1 \ldots \alpha_j}
f_i(y_j)
 \right)_{i, j = 1}^n = \pi(\mathbf{y}).
\end{equation*}
\end{proof}

\section{Point to line last passage percolation}
\label{lpp}

\subsection{Transition densities}
Last passage percolation times can be interpreted as 
an interacting particle system with a pushing interaction between the 
particles. We define a Markov chain $(\mathbf{G}^{\text{pp}}(k))_{k \geq 0}$
with $n$ particles with positions on the real line ordered as
$G_1^{\text{pp}} < \ldots < G_n^{\text{pp}}$. We update the system between time $k-1$
and time $k$ by applying the following local update rule sequentially 
to $G_1^{\text{pp}}, \ldots, G_n^{\text{pp}}$ as follows:
\begin{equation}
\label{update_rule_lpp}
 G_j^{\text{pp}}(k) = \max\{G_j^{\text{pp}}(k-1), G_{j-1}^{\text{pp}}(k))\}
 + e_{jk}
\end{equation}
where $(e_{jk})_{1 \leq j \leq n, k \geq 1}$ are an independent sequence of exponential 
random variables and $G_1^{\text{pp}}(0) = \ldots = G_n^{\text{pp}}(0)  = 0$. The
interactions of the particles are exactly the local 
update rules of last passage percolation and the largest particle position at time $n$
describes the point-to-point last passage percolation time 
$G^{\text{pp}}_{n} (n) = \max_{\pi \in \Pi_n} \sum_{(ij) \in \pi} e_{ij}$
where $\Pi_n$ is the set of all directed (up and right) 
paths from $(1, 1)$ to $(n, n)$.

The advantage of such an interpretation is that there is an explicit
transition 
density for this Markov chain. This was proven
in the case of equal 
parameters (and geometric data) 
by Johansson \cite{johansson2010} 
and with inhomogeneous parameters (and geometric data) 
by Dieker and Warren \cite{dieker2008}. 
This Markov chain plays an important role in the recent work, for example \cite{johansson_rahman}, on the two-time distribution 
of last passage percolation. 
In this section we show how this Markov chain can also be used to study 
point-to-line last passage percolation.

For $\alpha \in \mathbb{R}$, let $D^{\alpha}, I^{\alpha}$ be defined 
by acting on functions $f : \mathbb{R} \rightarrow \mathbb{R}$ 
which are infinitely differentiable for $x > 0$, are equal to zero on $x \leq 0$ 
and satisfy that $f^{(k)}(0_+)$ exists for each $k \geq 0$.
On such a class of functions define 
\begin{equation}
\label{defnD}
 D^{\alpha} f(x)  = \begin{cases} 
                    f'(x) - \alpha f(x), & x > 0 \\
                    0, & x \leq 0 
                   \end{cases}
                   \qquad\qquad
 I^{\alpha} f(x)  = \begin{cases}
                      \int_0^x e^{\alpha(x - t)} f(t) dt, & x > 0 \\
                      0, & x \leq 0.
                     \end{cases}
                 \end{equation}
Then $D^{\alpha}, I^{\alpha}$ preserve this class of functions and satisfy 
$D^{\alpha} I^{\alpha} f = f$ for functions of this form. 
We also define homogeneous analogues: for a function $g$ satisfying the above,
define $g^{(r)}(x)$ or $D^{(r)} g$ to be the $r$-th iterated derivative of $g$ for $x > 0$ and 
equal to zero for $x \leq 0$ and similarly $g^{(-r)}(x)$ or $I^{(-r)} g$ 
to be
the iterated integral $\int_0^x \frac{(x - y)^{r - 1}}{(r - 1)!} g_m(y) dy$ for 
$x > 0$ and equal to zero for $x \leq 0$.

\begin{proposition}
\label{transition_densities_lpp}
Let $(\mathbf{G}^{pp}(k))_{k \geq 0}$ be the Markov chain described above with $n$ particles 
constructed 
from independent exponentially distributed random variables $(e_{ij})_{1 \leq i \leq n, j \geq 1}$ 
with $e_{ij}$ having rate $\alpha_i > 0$. 
\begin{enumerate}[(i)]
 \item In the case of equal rates: $\alpha_1 = \ldots = \alpha_n = 2$,  
 the $m$-step transition probabilities
have a density with respect to Lebesgue measure 
on $W_n^+$ given by, for $x, y \in W_n^+$,
\begin{equation*}
Q_m(x, y) = \text{det}(g_m^{(j-i)}(y_j - x_i))_{i, j = 1}^n
\end{equation*}
where $g_m(z)  = \frac{2^m}{\Gamma(m)} z^{m-1} e^{-2z} 1_{z > 0}$ and $g_m^{(r)}$
are 
defined above. 
\item For $\alpha_j > 0$ for each $j = 1, \ldots, n$, the $m$-step transition densities 
 have a density with respect to Lebesgue measure on 
 $W_n^+$ given by, for $x, y \in W_n^+$, 
 \begin{equation*}
  Q_m(x, y) = \left( \prod_{i=1}^n \alpha_i \right) e^{-\sum_{i=1}^n\alpha_i(y_i - x_i)}
  \text{det}(f_m^{(i, j)}(y_j - x_i))_{i, j = 1}^n
 \end{equation*}
where $f_m(u) = \frac{u^{m-1}}{(m-1)!} 1_{u > 0}$ and
\begin{equation}
\label{f_deriv_int}
 f_m^{(i, j)}(z) = \begin{cases}
     D^{\alpha_{i+1} \ldots \alpha_{j}}   f_m(z) & \text{ for } j > i \\
   I^{\alpha_{j+1} \ldots \alpha_i}   f_m(z) & \text{ for } j <  i \\
               f_m(z) & \text{ for } i = j.
                                                         \end{cases}
\end{equation}
with $D$ and $I$ defined 
in equation (\ref{defnD}).
\end{enumerate}
\end{proposition}
Our proof is a generalisation of the method in Johansson \cite{johansson2010} to the case of 
inhomogeneous parameters and exponential rather than geometric jump distributions.
An exponential case is not an entirely straightforward generalisation of the formulas in the 
geometric case because of taking derivatives of functions with a discontinuity.
In order to obtain $m$-step transition densities from $1$-step transition densities 
we prove a version of Lemma \ref{ibp_lemma_inhomogeneous} for our operators $D$ and $I$. There are two 
differences: we allow for possible discontinuities in the functions at the origin and
part (ii) of the Lemma allows for new particles to be added at the origin. 
This will be used in the next subsection to study point-to-line last passage 
percolation.
\begin{lemma}
\label{ibp_lemma_lpp}
\begin{enumerate}[(i)]
\item Let $f, g$ be functions satisfying the 
conditions at the start of this section. Then 
for $x, z \in W_n^+$,
\begin{equation*}
 \int_{W_n^+} \text{det}\big(
 f^{(i, j)}(y_j-x_i)\big)_{i, j = 1}^n\text{det}\big(
 g^{(i, j)}(z_j-y_i)\big)_{i, j = 1}^n dy_1 \ldots dy_n 
 = \text{det}\bigg(
(f*g)^{(i, j)}(z_j - x_i)
\bigg)_{i, j = 1}^n
\end{equation*}
where $(f*g)(z) = \int_{0}^{z} f(y) g(z-y) dy$ and $f^{(i, j)}$, $g^{(i, j)}$ 
and $(f*g)^{(i, j)}$ are defined analogously to \eqref{f_deriv_int}.
\item Let $(f_i)_{i=1}^{n-1}$ be a collection of infinitely differentiable functions 
on $\mathbb{R}_+$ with $f_i(0) = 0$ for each $i = 1, \ldots, n-1$. 
Let $g$ be a 
function satisfying the 
conditions at the start of this section. 
 Then for $z \in W_n^+$, and using the notation $y_1:=0
 $\begin{multline*}
 \int_{W_{n-1}^+} \text{det}\big(f_{i-1}^{(1, j)}(y_j)\big)_{i, j = 2}^n\text{det}\big(
 g^{(i, j)}(z_j-y_i)\big)_{i, j = 1}^n dy_2 \ldots dy_n \\
 = \text{det}\left(
  \begin{matrix}  
 g(z_1) &  g^{(1, 2)}(z_2) & \ldots & g^{(1, n)}(z_n) \\
 (f_1*g)(z_1) & (f_1*g)^{(1, 2)}(z_2) & \ldots &  (f_1*g)^{(1, n)}(z_n) \\
 \vdots & \vdots & \ddots & \vdots \\
 (f_{n-1}*g)(z_1) & (f_{n-1}*g)^{(1, 2)}(z_2) & \ldots & (f_{n-1}*g)^{(1, n)}(z_n) 
 \end{matrix}\right)_{i,j =1}^n
\end{multline*} 
where $(f*g)(z) = \int_0^{z} f(z-y) g(y) dy$ and $f^{(i, j)}$, 
$g^{(i, j)}$ and  $(f*g)^{(i, j)}$ all defined analogously to \eqref{f_deriv_int}.
\end{enumerate}
\end{lemma}

\begin{proof}[Proof of Proposition \ref{transition_densities_lpp}]
 We first prove that the one-step transition densities are given by $r_1$.  This
 is equivalent to showing that for all $n \geq 1$, and for $x, y \in W_n^+$, 
 \begin{equation}
 \label{lpp_transition_identity}
  e^{-\sum_{i=1}^n \alpha_i(y_i - x_i)} 
  \text{det}(f_1^{(i, j)}(y_j - x_i))_{i, j = 1}^n 
  = \prod_{j=1}^n e^{-\alpha_j(y_j - \max(x_j, y_{j-1}))} 1_{y_j > x_j}
 \end{equation}
where we use the convention $y_0 :=0$. 
The right hand side is zero unless $x_j < y_j$ for all $j = 1, \ldots, n$. We check this for the left hand side. If $y_k \leq x_k$ for 
some $1 \leq k \leq n$ then the first $k$ columns of the matrix in \eqref{lpp_transition_identity} only have non-zero elements in the first 
$k-1$ rows since for $j \leq k$ and $i \geq k$ the $(i, j)$-th entry of the matrix in \eqref{lpp_transition_identity} 
is a function which only takes non-zero values for positive arguments and the argument is $y_j - x_i \leq 0$.  

For the remainder of the proof, we can 
suppose $x_j < y_j$ for $j=1, \ldots, n$. We prove \eqref{lpp_transition_identity} by induction on $n$ and observe that the result holds at $n =1$. 
For the inductive step we use a Laplace expansion of the 
 determinant in the last row
 \begin{equation}
 \label{laplace_exp_det}
  \text{det}(f^{(i, j)}_1(y_j - x_i))_{i, j = 1}^n \\
  = \sum_{k = 1}^n (-1)^{k+n} f^{(n, k)}_1(y_k - x_n)
   \text{det}(f^{(i, j)}_1(y_j - x_i))_{i \neq n, j \neq k}.
 \end{equation}
We prove the terms in the sum for $1 \leq k \leq n - 2$ are zero by considering separately
the cases $y_k \leq x_n$ 
and $y_k > x_n$. 
If $y_k \leq x_n$ then $f^{(n, k)}_1(y_k - x_n) = 0$. Suppose instead $y_k > x_n$.
Observe that for $z > 0$ and $ j > 1$, 
\begin{equation}
\label{f_identity}
 \left(\frac{d}{dz} - \alpha_{j}\right) f^{(i,j-1)}(z) = f^{(i,j)}(z).  
\end{equation}
Since $y_k > x_n$, then \eqref{f_identity} can be used to re-express the columns indexed by $j = k+1, \ldots, n$ of the final determinant in \eqref{laplace_exp_det}
which involve strictly positive arguments $y_j - x_i$ for $j \geq k+1$. Therefore
\begin{equation}
\label{determinant_expression}
    \text{det}(f^{(i, j)}(y_j - x_i))_{i \neq n, j \neq k} \\
  =  \prod_{j=k+1}^n \left(\frac{\partial}{\partial y_j} - \alpha_j \right) \text{det}(M_{ij})_{i, j=1}^{n-1}
\end{equation}
where 
\begin{equation*}
M_{ij} = \begin{cases} 
f^{(i,j)}(y_j - x_i) & \text{ for } 1 \leq j \leq k-1 \\
f^{(i,j)}(y_{j+1} - x_i) & \text{ for } k \leq j \leq n-1.
 \end{cases}
\end{equation*}
%
We apply the inductive hypothesis 
to the determinant of $M$ with the variables $x_1, \ldots, x_{n-1}$
and $y_1, \ldots, y_{k-1}, y_{k+1}, \ldots y_{n}$ and parameters 
$\alpha_1, \ldots, \alpha_{n-1}$ to observe that (\ref{determinant_expression}) equals
\begin{equation}
\label{eq_lpp_transition}
\begin{split}
\MoveEqLeft
 \prod_{j=k+1}^n \left(\frac{\partial}{\partial y_j} - \alpha_j \right)\bigg\{
  e^{\sum_{j = 1}^{k-1} \alpha_j(y_{j} - x_j) + \sum_{j=k}^{n-1} 
  \alpha_j (y_{j+1} - x_j)} \prod_{j=1}^{k-1}
 e^{-\alpha_j(y_{j} - \max (x_j, y_{j-1}))}  \\
 & \cdot e^{-\alpha_k(y_{k+1} - \max(x_k, y_{k-1}))} 
 \prod_{j = k+1}^{n-1} e^{-\alpha_j(y_{j+1} - \max(x_j, y_j))} 
 \bigg\}.
 \end{split}
\end{equation}
We observe that $\max(y_j, x_{j}) = y_j$ 
for each $j = k+1, \ldots, n-1$. Therefore
the expression in $\{ \cdot \}$ is differentiable in $y_{k+1}, \ldots, y_n$, and 
furthermore, equals a factor of $e^{\alpha_{n-1} y_{n-1}}$ 
multiplied by a factor independent of $y_{n-1}$. Therefore the expression in $\{ \cdot \}$
vanishes once we apply $\left(\frac{\partial}{\partial y_{n-1}} - \alpha_{n-1}\right)$
and \eqref{eq_lpp_transition} equals zero.

Therefore the sum in equation (\ref{laplace_exp_det}) can be 
restricted to the sum of two terms 
\begin{equation}
\label{laplace_2}
 - f^{(n, n-1)} (y_{n-1} - x_n) 
   \text{det}(f^{(i, j)}(y_j - x_i))_{i \neq n, j \neq n-1}  \\
  +  \text{det}(f^{(i, j)}(y_j - x_i))_{i \neq n, j \neq n}.
\end{equation}
%
We consider the two cases when $y_{n - 1} \leq x_n$ and $y_{n-1} > x_n$ separately. 
If $y_{n - 1} \leq x_n$ then
the only non-zero contribution comes from the second term 
in equation (\ref{laplace_2}). 
In this case by applying the inductive hypothesis and noting that $\max(y_{n-1}, x_n) = x_n$
we obtain the required result that
\begin{equation}
\label{no_interaction_term_1}
 e^{-\sum_{j=1}^n \alpha_j(y_j - x_j)} \text{det}(f^{(i, j)}(y_j - x_i))_{i \neq n, j \neq n} = \prod_{j=1}^{n-1} e^{-\alpha_j(y_j - \max(x_j, y_{j-1}))} 
e^{-\alpha_n(y_n - x_n)}.
\end{equation}
Suppose instead $y_{n - 1} > x_n$ and consider equation \eqref{laplace_2}. 
Observe that
\begin{equation}
\label{prefactor}
 f^{(n, n-1)}(y_{n -1} - x_n) = \frac{1}{\alpha_n}(e^{-\alpha_n(y_{n-1} - x_n)} - 1).
\end{equation}
We consider the first determinant in equation 
(\ref{laplace_2}). The argument in the last column is strictly positive and so equation 
\eqref{f_identity} can be used to re-express this column as follows
\begin{equation*}
 \text{det}(f^{(i, j)}(y_j - x_i))_{i \neq n, j \neq n-1} = 
 \left( \frac{\partial }{\partial y_n} - \alpha_n \right) \text{det}(K_{ij})_{i, j = 1}^{n-1}
\end{equation*}
where 
\begin{equation*}
K_{ij} = \begin{cases} 
f^{(i,j)}(y_j - x_i) & \text{ for } 1 \leq j \leq n-2 \\
f^{(i,j)}(y_{j+1} - x_i) & \text{ for } j = n-1.
 \end{cases}
\end{equation*}
We apply the inductive hypothesis to the determinant of $K$
with variables $x_1, \ldots, x_{n-1}$ 
and $y_1, \ldots, y_{n-2}, y_n$ 
and parameters $\alpha_1, \ldots, \alpha_{n-1}$ to obtain, 
\begin{multline}
\label{det_exp}
   \text{det}(f^{(i, j)}(y_j - x_i))_{i \neq n, j \neq n-1} =
 \left( \frac{\partial}{\partial y_n}-\alpha_n\right)  
 \bigg\{e^{\sum_{j=1}^{n-2} \alpha_j(y_j - x_j) + \alpha_{n-1}
(y_n - x_{n-1})} \\  \cdot
\prod_{j = 1}^{n-2} e^{-\alpha_j(y_j - \max(x_j, y_{j-1}))} 
  e^{-\alpha_{n-1}
(y_n - \max(x_{n-1}, y_{n-2}))} 
\bigg\}. 
\end{multline}
The expression in $\{ \cdot \}$ is independent of $y_n$. Therefore the term in \eqref{det_exp}
involving $\partial/\partial y_n$ applied to $\{\cdot\}$ equals zero.

Using \eqref{prefactor}, \eqref{det_exp} and the inductive hypothesis we evaluate \eqref{laplace_2} multiplied by the 
prefactor $\exp(-\sum_{j=1}^n \alpha_j(y_j - x_j))$ for $y_{n-1} > x_n$
and obtain 
\begin{equation}
\label{no_interaction_term_2}
 \frac{1}{\alpha_n} e^{-\sum_{j=1}^n \alpha_j(y_j - x_j)} \text{det}(f^{(i, j)}(y_j - x_i))_{i \neq n, j \neq n-1} = \prod_{j=1}^{n-1} e^{-\alpha_j(y_j - \max(x_j, y_{j-1}))} 
e^{-\alpha_n(y_n - x_n)}
\end{equation}
and 
\begin{multline}
\label{interaction_term}
e^{-\sum_{j=1}^n \alpha_j(y_j - x_j)} \frac{1}{\alpha_n} e^{-(\alpha_n(y_{n-1} - x_n)} \text{det}(f^{(i, j)}(y_j - x_i))_{i \neq n, j \neq n-1}\\
= \prod_{j=1}^{n-1} e^{-\alpha_j(y_j - \max(x_j, y_{j-1}))} 
e^{-\alpha_n(y_n - y_{n-1})}.  
\end{multline}
To complete the inductive step of the proof of \eqref{lpp_transition_identity} in the case $y_{n-1} > x_n$ 
we use \eqref{laplace_exp_det} and \eqref{laplace_2} 
to simplify the left hand side of \eqref{lpp_transition_identity} and 
observe that \eqref{no_interaction_term_1} and \eqref{no_interaction_term_2} cancel 
while 
\eqref{interaction_term} equals the required expression.
This completes the inductive step and we establish that \eqref{lpp_transition_identity} holds. 

In the case when all parameters are equal, say $\alpha_1 = \ldots = \alpha_n = 2$, 
we obtain the required formula by bringing
the exponential prefactor inside the derivative 
and integral operators. 
The formula for the $m$-step transition densities follows from Lemma \ref{ibp_lemma_lpp}.
\end{proof}

\begin{proof}[Proof of Lemma \ref{ibp_lemma_lpp}]

We first prove part (i) for $f$ and $g$ which satisfy the 
 conditions of the Lemma and furthermore are infinitely 
 differentiable on all of $\mathbb{R}$.
We apply Lemma \ref{ibp_lemma_inhomogeneous} with the 
functions $f_i(\cdot) = (I^{\alpha_1, \ldots, \alpha_i} f)(\cdot - x_i)$ 
and $g_j(\cdot) = (D^{\alpha_1, \ldots, \alpha_j}g)(z_j - \cdot)$ and observe that
$D^{\alpha_1 \ldots \alpha_j} f_i(z) = f^{(i, j)}(z - x_j)$ and 
$J^{-\alpha_1, \ldots, -\alpha_i} g_j(z) = g^{(i, j)}(z - x_j)$.
The $D^{\alpha}$ have been defined 
on a more general class of functions in this section 
but agree with the definition used in Lemma \ref{ibp_lemma_inhomogeneous}
when the functions are smooth. The condition
on the growth of $f$ at infinity in Lemma \ref{ibp_lemma_inhomogeneous}
can be removed because $g(z_j - \cdot)$ is zero in a neighbourhood of infinity. 
As a result Lemma \ref{ibp_lemma_inhomogeneous} proves that 
\begin{equation}
\label{ibp_smooth}
 \int_{W_n^+} \text{det}\big(
 f^{(i, j)}(y_j-x_i)\big)_{i, j = 1}^n\text{det}\big(
 g^{(i, j)}(z_j-y_i)\big)_{i, j = 1}^n dy_1 \ldots dy_n 
 = \text{det}\left( 
(f*g)^{(i, j)}(z_j - x_i)\right)_{i, j = 1}^n
\end{equation}
where we have used the following to simplify the right hand side,
\begin{equation*}
 \int_{-\infty}^{\infty} (D^{\alpha_1 \ldots \alpha_j} 
 g)(z_j - y) (I^{\alpha_1 \ldots \alpha_i} f)(y - x_i)
 dy
 =(f*g)^{(i, j)}(z_j - x_i)
\end{equation*}
where the operators pass through the convolution
because $f$ and $g$ are smooth on all of $\mathbb{R}$.
Therefore the Lemma holds for 
functions which are infinitely differentiable on all 
of $\mathbb{R}$ in addition to satisfying the stated conditions.

%

We now use approximation to extend the class of functions $f$ and $g$ 
to those stated in the Lemma. For each $\epsilon > 0$, let $f_{\epsilon}$ be an
infinitely differentiable function satisfying 
$f_{\epsilon}(x) = f(x)$ for $x \geq \epsilon$ and $f_{\epsilon}(x) = 0$ for 
$x \leq 0$, and that there exists a constant $C$ such that $\lvert f_{\epsilon}(x) \rvert < C$ for all $\epsilon$ and all $x \in [-1, 1]$.  
For any $z \in \mathbb{R}$ and $j \geq 1$, 
\begin{gather}
\label{convergence1}
 \lim_{\epsilon \rightarrow 0} (f_{\epsilon} * g_{\epsilon})(z)  =  (f * g)(z), \qquad
  \lim_{\epsilon \rightarrow 0} I^{\alpha_1, \ldots, \alpha_j} (f_{\epsilon} * g_{\epsilon})(z)  =  I^{\alpha_1, \ldots, \alpha_j} (f * g)(z),\\
  \label{convergence2}
   \lim_{\epsilon \rightarrow 0} D^{\alpha_1, \ldots, \alpha_j} (f_{\epsilon} * g_{\epsilon})(z)  =  D^{\alpha_1, \ldots, \alpha_j} (f * g)(z). 
\end{gather}
%
%
We prove \eqref{convergence2}; equation \eqref{convergence1} is more straightforward.
Observe that if $z \leq 0$ then both sides are zero and for $z > 0$,
\begin{multline}
\label{convolution_approx}
\frac{d^{j}}{dz^{j}}\left((f_{\epsilon}*g_{\epsilon})(z) - (f*g)(z) \right) = 
\int_0^{\epsilon} f_{\epsilon}(y) g_{\epsilon}^{(j)}(z - y) dy 
- \int_0^{\epsilon} f(y) g^{(j)}(z - y) dy \\
  +  \int_0^{\epsilon} f_{\epsilon}^{(j)}(z - y) g_{\epsilon}(y)  dy
  - \int_0^{\epsilon} f^{(j)}(z - y) g(y)  dy
\end{multline}
tends to zero as $\epsilon \rightarrow 0$ because
for $\epsilon < z/2$ then $g_{\epsilon}^{(j)}(z - y) = g^{(j)}(z - y)$ 
and $f_{\epsilon}^{(j)}(z - y) = f^{(j)}(z - y)$ 
for $0 \leq y \leq \epsilon$, and $g_{\epsilon}$ and $f_{\epsilon}$ are bounded.

Equation \eqref{ibp_smooth} holds with $f$ and $g$ replaced by 
$f_{\epsilon}$ and $g_{\epsilon}$ because these are smooth. Defining 
$f_{\epsilon}^{(i, j)}$ and $g_{\epsilon}^{(i, j)}$ analogously to \eqref{f_deriv_int}
we obtain, 
\begin{multline}
\label{ibp_smooth_2}
 \int_{W_n^+} \text{det}\big(
 f_{\epsilon}^{(i, j)}(y_j-x_i)\big)_{i, j = 1}^n\text{det}\big(
 g_{\epsilon}^{(i, j)}(z_j-y_i)\big)_{i, j = 1}^n dy_1 \ldots dy_n \\
 = \text{det}\left( 
 (f_{\epsilon}*g_{\epsilon})^{(i, j)}(z_j - x_i)\right)_{i, j = 1}^n.
\end{multline}
We want to pass to the limit as $\epsilon \downarrow 0$. 
Equations \eqref{convergence1} and \eqref{convergence2} show that the right hand side of 
equation \eqref{ibp_smooth_2} converges. 

%
 
Let $x_1 < \ldots < x_n$ and $z_1 < \ldots < z_n$ and let $\epsilon < \min(\min_{i < j} \{z_j - z_i\}, \min_{i < j} \{x_j - x_i\})$. 
Consider the Laplace expansions of
the determinants on the left hand side of \eqref{ibp_smooth_2}. A term in the expansion corresponding to
permutations $\sigma$ and $\rho$ equals 
\begin{equation*}
\int_{W_n^+} \prod_{i=1}^n f_{\epsilon}^{(\sigma(i), i)}(y_i - x_{\sigma(i)})
g_{\epsilon}^{(i, \rho(i))}(z_{\rho(i)} - y_i) dy_1 \ldots dy_n.
\end{equation*}
If $\rho$ is the identity then each factor 
$g_{\epsilon}(z_{\rho(i)} - y_i)$ is bounded uniformly in $\epsilon$ for $0 \leq y_i \leq z_{\rho(i)}$. 
If $\rho$ is not the identity then there exists $i < j$ with $\rho(i) > i$ 
and $\rho(j) \leq i$. 
The $(i, \rho(i))$ factor is equal to
$g_{\epsilon}^{(i, \rho(i)}(z_{\rho(i)} - y_i)$ and is bounded uniformly in $\epsilon$ on 
the region $0 \leq y_i \leq z_{\rho(i)} - \epsilon$. On the region, 
$y_i > z_{\rho(i)} - \epsilon$ this factor may be unbounded, however, the $(j, \rho(j))$ 
factor is zero because
$y_j \geq y_i > z_{\rho(i)} - \epsilon > z_{\rho(j)}$ and therefore the argument in 
the $(j, \rho(j))$ factor is strictly negative.
The same argument applies to $\sigma$.  
This shows 
that the integrand is bounded uniformly in $\epsilon$ and since it converges pointwise 
then the convergence of the left hand side of \eqref{ibp_smooth_2}
follows from the dominated convergence theorem. 

We have established part (i) when 
$x_1 < \ldots < x_n$ and $z_1 < \ldots < z_n$. 
We will complete the proof of part (i) by showing that both sides 
are continuous in $x$ and $z$ for $x, z \in W_n^+$. 
For the right hand side of part (i), we
observe that 
$y \rightarrow (f * g)^{(i, j)}(y)$ is continuous except if $j > i$ and $y = 0$. 
We consider the Laplace expansion of the right hand side with the sum indexed by permutations $\rho$.
If $\rho$ is the identity then each factor is continuous. If $\rho$ is not the identity, then
there exists $i < j$ with $\rho(i) > i$ 
and $\rho(j) \leq i$. The argument of the $(j, \rho(j))$ factor is $z_{\rho(j)} - x_{j}$ and so the $(j, \rho(j))$ factor is zero 
on $\{z_{\rho(i)} \leq x_i\}$ because $x_j \geq x_i \geq z_{\rho(i)} \geq z_{\rho(j)}$. 
On $\{z_{\rho(i)} > x_i\}$ then the factor $(f*g)^{(i, \rho(i)}(z_{\rho(i)} - x_i)$ is continuous. 
As $z_{\rho(i)} - x_i \downarrow 0$, the factor $(f*g)^{(i, \rho(i)}(z_{\rho(i)} - x_i)$ remains bounded and 
the factor $(f*g)^{(j, \rho(j))}(z_{\rho(j)}-y_j) \rightarrow 0$.
As a result the right hand side of part (i) is continuous in $x$ and $z$. 
The integrand on the left hand side of part (i) is bounded over compact 
intervals and so the left hand side is continuous 
in $x$ and $z$.
This completes the proof of part (i).

Formally, part (ii) of the Lemma follows from embedding the matrix of size $n-1$ on the left 
hand side of part (ii) 
in a matrix of size $n$ with the addition of a delta function
\begin{equation*}
 \text{det}(f_{i-1}^{(1, j)}(y_j))_{i, j=2}^n \delta_0(y_1) = 
 \text{det}(f_{i-1}^{(1, j)}(y_j))_{i, j = 1}^n
\end{equation*}
where $f_0 := \delta_0(\cdot)$ and $f_0^{(1, j)}$ are interpreted as weak derivatives.
Continuing formally part (ii) is now an application of 
Lemma \ref{ibp_lemma_inhomogeneous}
\begin{equation*}
 \int_{W_n^+} \text{det}(D^{\alpha_2 \ldots \alpha_j} f_{i-1}(y_j))_{i, j = 1}^n
 \text{det}(J^{-\alpha_2,\ldots, -\alpha_i} g_j(y_i))_{i, j = 1}^n dy_1 \ldots dy_n
 = \text{det}\left(\int_{-\infty}^{\infty} f_{i-1}(y) g_j(y) dy\right)_{i, j = 1}^n
\end{equation*}
where $g_j(y_i) = D^{\alpha_2 \ldots \alpha_j} g(z_j - y_i)$ and 
$f_0:=\delta_0$. The top row on the right hand side is equal to 
$(\delta_0, g_j) = g^{(1, j)}(z_j)$.

To give a rigorous proof of part (ii) we use a similar integration by parts argument to Lemma 
\ref{ibp_lemma_inhomogeneous} and approximate $g$ by a smooth $g_{\epsilon}$ 
as in part (i) of the current Lemma. In the proof, the condition $f_i(0) = 0$ for each $i = 1, \ldots, n-1$
is needed for the boundary term from the integration by parts with respect 
to $y_2$ to be zero.
\end{proof}

\subsection{Proof of Theorem \ref{equality_law}}
We apply the results of the previous section to study
point-to-line last passage percolation. Recall the point to line last passage percolation times $G(k,l)$ are defined by \eqref{defnG}. 
It is convenient to view the exponential data and last passage percolation times
to be set-up in the following array:
\begin{equation*}
\begin{matrix}
 G(1, n) & \cdots & G(1, 2) & G(1, 1) \\
 & \ddots & \vdots & \vdots \\
 & & G(n-1, 2) & G(n-1, 1) \\
 & & & G(n, 1)
\end{matrix}
\end{equation*}
where we can view the vertical direction as time, increasing upwards,
 and each horizontal layer  as describing the positions  of a system of  particles with an 
additional particle added after each time step.
These last passage percolation times form a Markov chain 
$(\mathbf{G}^{\text{pl}}(k))_{1 \leq k \leq n}$ where 
$\mathbf{G}^{\text{pl}}(k) = (G(n-k+1, k), \ldots, G(n-k+1, 1))$. 
We use the notation $\mathbf{G}^{\text{pl}}(k) = (G^{\text{pl}}_1(k), \ldots, G^{\text{pl}}_k(k))$.
The recursive property of 
last passage percolation
 implies that $\mathbf{G}^{\text{pl}}$ satisfies for all $1 \leq j \leq k \leq n$,
\begin{equation}
\label{update_rule_pl}
 G_j^{\text{pl}}(k) = \max\{G_{j-1}^{\text{pl}}(k-1), G_{j-1}^{\text{pl}}(k)\}
 + e_{n-k+1, k-j+1}
\end{equation}
where we recall that $e_{ij}$ has rate $\alpha_i + \alpha_{n-j+1}$ and we use the notation $G_0^{\text{pl}}(k):=0$ for all $k = 0, \ldots, n$. 
Comparing this with the update rule for the point to point case  given at \eqref{update_rule_lpp} we see that it is the same up to a shift in  the labels of the particles.   Thus we can  repeatedly apply the $1$-step transition densities of 
Proposition \ref{transition_densities_lpp}
while adding in an extra particle at the origin after each step to  compute  the joint distribution of the vector $(G(1,n), \ldots, G(n, n))$. 
This will show that the distribution of this vector   agrees with the invariant measure of the Brownian system considered 
in Theorem \ref{invariant_measure}.
This also  proves the positivity and normalisation of 
$\bar{\pi}$ and $\pi$ stated in Lemma \ref{positivity} 
which is required to complete the proof of
Theorem \ref{invariant_measure}.

\begin{proof}[Proof of Theorem \ref{equality_law}]
We prove the result by induction on $n$  and observe that the case $n = 1$ is true. 
We first prove the case of  equal rates: $\alpha_1 = \ldots = \alpha_n = 1$.
Suppose that the distribution of $ \mathbf{G}^{\text{pl}}(n-1)$ is given by the density 
\[
\bar{\pi}(x_1, \ldots, x_{n-1})= \text{det}(f_{i-1}^{(j-1)}(x_j))_{i, j = 1}^{n-1}.
\]
where the functions $f_0, f_1,\ldots f_{n-1}$ are specified in  Proposition \ref{invariant_measure}.   In view of equation \eqref{update_rule_pl} and  Proposition \ref{transition_densities_lpp}
the distribution of 
$ \mathbf{G}^{\text{pl}}(n)$ has density given by 
\begin{equation*}  
\int_{W_{n-1}^+} \text{det}(f_{i-2}^{(j-2)}(x_j))_{i, j = 2}^n 
 \text{det}(g^{(j - i)}(y_j - x_i))_{i, j = 1}^n
 dx_2 \ldots dx_n
\end{equation*}
where we  re-label the particle positions at time $n-1$  as  $x_2, \ldots, x_n$ and use 
the notation $x_1:=0$.
We use Lemma \ref{ibp_lemma_lpp} part (ii) to express this as a single determinant
\begin{equation*}
  \text{det}\left(\begin{matrix}
 D^{(j-1)} g(y_j) & \qquad \text{ for } i = 1 \\
 D^{(j-1)} (F_{i-2}*g)  (y_j )   & \qquad \text{ for } i = 2, \ldots, n
  \end{matrix}\right)_{i, j = 1}^n
\end{equation*}
where $D^{(j)}$ denotes the $j$-th derivative, and $F_i(x)=\int_0^xf_i(z)dz$ . 
The convolutions can be calculated by using the defining property of the $f_i$, namely
that for each $i = 1, \ldots, n-1$ we have
${\mathcal G}^* f_i = f_{i-1}$ with $f_i(0) = f_i'(0) = 0$
or in integrated form for $x > 0$,
\begin{equation*}
 f_i'(x) = \int_0^x 2 e^{-2(x - u)}f_{i-1}(u) du = \int_0^x g(x - u) f_{i-1}(u) du.
\end{equation*}
From this  it follows that for $x > 0$, 
\begin{equation*}
f_i(x) =  \int_0^x F_{i-1}(u) g(x - u) du 
\end{equation*}
by differentiation  and using the boundary conditions $f_i(0) = 0$ for $i = 1, \ldots, n-1$ and $F_i(0) = 0$ for $i = 0, \ldots, n-2$. 
Finally note that $g(x)= f_0(x)$  for $x>0$.  Therefore  the distribution of 
$ \mathbf{G}^{\text{pl}}(n)$ has density given by 
\begin{equation*}
 \text{det}(f_{i - 1}^{(j - 1)}(y_j))_{i, j = 1}^n
\end{equation*}
and this completes the inductive step with equal rates.

In the case of distinct rates we proceed again by induction. 
The inductive hypothesis allows us to  suppose that the distribution of $ \mathbf{G}^{\text{pl}}(n-1)$ is given by the density 
 \begin{equation*}
 \pi(x_2, \ldots, x_n) = 
 \frac{1}{\prod_{2 \leq i < j \leq n} (\alpha_i - \alpha_j)} e^{-\sum_{i = 2}^n \alpha_i x_i} \text{det}(D^{\alpha_2 \ldots \alpha_j}
  f_i(x_j))_{i, j = 2}^n.
 \end{equation*}
Then the   distribution of  $ \mathbf{G}^{\text{pl}}(n)$  is computed using one step  transition density  for general jump  rates in 
 Proposition \ref{transition_densities_lpp} to  be 
 \begin{multline}
 \label{inductivestep}
 \frac{\prod_{j=1}^n(\alpha_1 + \alpha_j)}{\prod_{2 \leq i < j \leq n} (\alpha_i - \alpha_j)}  \int_{W_{n-1}^+}  e^{-\sum_{i = 2}^n \alpha_i x_i} 
 \text{det}(D^{\alpha_2 \ldots \alpha_j}
  f_i(x_j))_{i, j = 2}^n
  e^{-\sum_{i=1}^n (\alpha_1 + \alpha_i)(y_i - x_i)} \\
   \times \bigg( \text{det}(
   f^{(i, j; \alpha_1 + \mathbf{\alpha})}_1(y_j - x_i)
  )_{i, j = 1}^n dx_2 \ldots dx_n \bigg)
 \end{multline}
 where $f^{(i, j; \alpha_1 + \mathbf{\alpha})}_{1}$ is defined as in \eqref{f_deriv_int} but with parameters $\alpha_1 + \alpha_i$ for $i = 1, \ldots, n$
 and once again we have used the notation $x_1 :=0$.   In applying the transition density from 
 Proposition \ref{transition_densities_lpp} we need to substitute 
 $\alpha_1+\alpha_i $ for $ \alpha_i$  to take account of the fact that  the  random variable $e_{1, n-j+1}$  
 which contributes to  $  \mathbf{G}^{\text{pl}}_j(n)$ has rate $\alpha_1+\alpha_{j}$. 
 
  The exponential terms in $\alpha_1$ can be brought inside the integral in \eqref{inductivestep} and
 derivatives to obtain 
\begin{equation*}
  \int_{W_n^+} e^{-\sum_{i=1}^n \alpha_i y_i} \text{det}(D^{\alpha_2 \ldots \alpha_j}
  f_i(x_j))_{i, j = 2}^n \bigg( \text{det}(
  \hat{f}^{(i, j)}_1(y_j - x_i) 
  )_{i, j = 1}^n dx_2 \ldots dx_n \bigg)
\end{equation*}
where $\hat{f}^{(i, j)}_1$ is defined as in \eqref{f_deriv_int} but with the function $f_1$ 
replaced by $e^{-\alpha_1 z} 1_{z > 0}$
This is now in the form to apply Lemma \ref{ibp_lemma_lpp} part (ii) to obtain,
\begin{equation*}
 e^{-\sum_{i=1}^n \alpha_i y_i}  \text{det}\left(
 \begin{matrix}  
 D^{\alpha_2, \ldots, \alpha_j}  
 e^{-\alpha_1 y_j} \qquad\qquad \qquad \text{ for } i = 1 \\
 D^{\alpha_2, \ldots, \alpha_j}  
 \int_0^{y_j} f_i(x) e^{-\alpha_1(y_j - x)} dx \qquad\text{ for } i = 2, \ldots, n
 \end{matrix}\right)_{i,j =1}^n
\end{equation*}
where $D^{\emptyset} = \text{Id}$. 
%
The first row is given by
\begin{equation*}
e^{-\alpha_1 y_j} = \frac{1}{2\alpha_1}
D^{\alpha_1} f_1(x).
 \end{equation*}  
For each $i = 2, \ldots, n$ the integrals can be computed explicitly (noting that the 
$\alpha_i$ are distinct):
 \begin{IEEEeqnarray*}{rCl}
 \int_0^{y_j}  f_i(x) e^{-\alpha_1(y_j - x)} dx & = & 
 \frac{(\alpha_1 - \alpha_i)e^{\alpha_i y_j} - (\alpha_1 + \alpha_i)e^{-\alpha_i y_j}}
 {(\alpha_1 - \alpha_i)(\alpha_1 + \alpha_i)}+ Ce^{-\alpha_1 y_j} \\
&  = & \frac{1}{(\alpha_1 + \alpha_i)(\alpha_1 - \alpha_i)} D^{\alpha_1} f_i(y_j) + Ce^{-\alpha_1 y_j}
\end{IEEEeqnarray*}
where $C = C(\alpha)$ is some constant in $y_1$ and $C e^{-\alpha_1 y}$ 
can be removed from the $i$-th row by row operations.  
This shows that the density of $ \mathbf{G}^{\text{pl}}(n)$ is given by
\begin{equation*}
 \pi(x_1, \ldots, x_n) = 
 \frac{1}{\prod_{1 \leq i < j \leq n} (\alpha_i - \alpha_j)} 
 e^{-\sum_{i=1}^n \alpha_i y_i} \text{det}\left(D^{\alpha_1 \ldots \alpha_j}
   f_i(y_j) \right)_{i, j = 1}^n
\end{equation*}
and so completes the inductive step with distinct 
$(\alpha_1, \ldots, \alpha_n)$.

For general $(\alpha_1, \ldots, \alpha_n)$ such that $\alpha_i > 0$ 
for each $i =1, \ldots, n$ we prove the result by a continuity argument 
in $\alpha$. By Proposition \ref{time_change_refl} we have the following 
representation of the invariant measure:
\begin{equation*}
 (Y_1^*, \ldots, Y_n^*) \stackrel{d}{=} \left(\sup_{0 \leq s \leq \infty} Z_1^1(s), \ldots, \sup_{0 \leq s \leq \infty} Z_n^n(s)\right)
\end{equation*}
and in the proof we also showed that almost surely there exists some random time $v$ such that all of 
the suprema on the right hand side have stabilised. Moreover for any $\epsilon > 0$ 
this time can be chosen uniformly over
drifts bounded away from the origin $\alpha_1 \geq \epsilon \ldots, \alpha_n \geq \epsilon$.    We can construct a realisation of the Brownian paths 
$(B_1^{(-\alpha_1)}, \ldots, B_n^{(-\alpha_n)})$ so that they are continuous in $\alpha_1, \ldots, \alpha_n$ in the supremum norm on  compact time intervals. 
Therefore since $\epsilon$ is arbitrary we obtain that the right hand side is almost surely
continuous in the variables $(\alpha_1, 
\ldots, \alpha_n)$ on the set $(0, \infty)^n$. 
Therefore the distribution of $(Y_1^*, \ldots, Y_n^*)$ is continuous on the same set,
and so is the distribution of $(G(1, n), \ldots, G(1, 1))$ (as a finite number of 
operations of summation and maxima applied to exponential random variables).
This continuity completes the proof for any $\alpha_i > 0$ for $i = 1, \ldots, n$.
\end{proof}

\begin{proof}[Proof of Theorem \ref{sup_lpp}]
The Theorem follows by combining Theorem \ref{equality_law} with Proposition 
\ref{time_change_refl} part (ii).
\end{proof}

\section{Finite temperature}
\label{finite_temp_section}

\subsection{Time reversal}
\label{finite_temp_time_reversal}

The partition function for a $1+1$ dimensional directed point-to-point polymer 
in a Brownian environment (also known as the O'Connell-Yor polymer 
and studied in \cite{oconnell_yor, o_connell2012}) 
is the random variable,
\begin{equation*}
 Z_n(t) = \int_{0 = s_0 < \ldots < s_{n-1} < s_n = t} e^{\sum_{i=1}^n
B_i^{(-\alpha_{n-i+1})}(s_i) - B_i^{(-\alpha_{n-i+1})}(s_{i-1})}
 ds_1 \ldots ds_{n-1}.
\end{equation*}
We define a second random variable with an extra integral over $s_0$
and with the drifts reordered, 
\begin{equation}
\label{defnY_finite_tmp}
  Y_n(t) = \int_{0 < s_0 < \ldots < s_{n-1} < s_n = t} e^{\sum_{i=1}^n
B_i^{(-\alpha_i)}(s_i) - B_i^{(-\alpha_i)}(s_{i-1})}
 ds_0 \ldots ds_{n-1}.
\end{equation}
This is the partition function for a $1+1$ dimensional directed polymer 
in a Brownian environment 
with a \emph{flat initial condition}.
A change of variables shows that 
\begin{IEEEeqnarray*}{rCl}
 Y_n(t) 
& = & \int_{0 = u_0 < \ldots < u_{n} < t} e^{\sum_{i=1}^n
B_i^{(-\alpha_i)}(t - u_{n-i}) - B_i^{(-\alpha_i)}(t - u_{n-i+1})}
 du_1 \ldots du_{n}
\end{IEEEeqnarray*}
by letting $t - u_i = s_{n-i}$. By time reversal of Brownian motions,
$(B_{n-i+1}^{(-\alpha_{n-i+1})}(t) - B_{n-i +1}^{(-\alpha_{n-i+1})}
(t - s)_{s \geq 0}\stackrel{d}{=} (B_i^{(-\alpha_{n-i+1})}(s))_{s \geq 0}$,
we obtain, 
\begin{equation}
\label{time_reversal_eq}
Y_n(t) \stackrel{d}{=}  \int_{0 = u_0 < \ldots < u_{n} < t} e^{\sum_{i=1}^n
B_{n-i+1}^{(-\alpha_{i})}(u_{n-i+1}) - B_{n-i+1}^{(-\alpha_{i})}(u_{n-i})}
 du_1 \ldots du_{n}
 = \int_{0}^{t} Z_n(s) ds
\end{equation}
where the final equality follows by changing the index of summation from $i$ to $n-i+1$. 
As $t \rightarrow \infty$, the right hand side 
converges to $\int_{0}^{\infty} Z_n(s) ds$ and we now check that this is an almost 
surely finite random variable.
We consider the drifts and Brownian motions separately and bound the contribution from 
the Brownian motions.
For each $j = 1, \ldots, n$ let $\delta_j > 0$ and observe that there exists random constants $K_1, \ldots, K_n$ such that 
$B_1(s) \leq K_1 + \delta_1 s$ for all $s > 0$ and  
$\sup_{0 \leq s \leq t} B_j(t) - B_j(s) \leq K_j + \delta_j t$ for $t \geq 0$ and each $j = 2, \ldots, n$.
By choosing $\delta_1 + \ldots + \delta_n < \min_{1 \leq j \leq n} \alpha_{j}$ this shows
that the negative drifts dominate and the integral is almost surely finite. 
As a result the left hand side 
of (\ref{time_reversal_eq}) converges 
in distribution to a random variable which we denote $Y_n^*$ 
which satisfies 
\begin{equation}
\label{defnYStarfiniteTemp}
 Y_n^* \stackrel{d}{=} \int_0^{\infty} Z_n(s) ds.
\end{equation}

\subsection{Exponentially reflecting Brownian motions with a wall}
\label{finite_temp_sdes}
We extend \eqref{defnY_finite_tmp} to a definition of a
vector $(Y_1, \ldots, Y_n)$ as a functional of $n$ independent Brownian 
motions with drifts $(B_1^{(-\alpha_1)}, \ldots, B_n^{(-\alpha_n)})$ according to 
\begin{equation*}
 Y_k(t) = \int_{0 < s_0 < \ldots < s_{k-1} < s_k = t} e^{\sum_{i=1}^k
B_i^{(-\alpha_i)}(s_i) - B_i^{(-\alpha_i)}(s_{i-1})}
 ds_0 \ldots ds_{k-1} \text{ for } k = 1, \ldots, n.
\end{equation*}
The system $(Y_1, \ldots, Y_n)$ can be described by a system of SDEs.
Let $X_{j} = \log  \left(\frac{1}{2} Y_{j} \right)$ and observe that by 
It\^{o}'s formula,
\begin{gather}
\label{defnX}
 dX_{1}(t) = dB_1^{(-\alpha_1)}(t) + (e^{-X_{1}(t)}/2) dt \\
\label{defnX_2} 
 dX_{j}(t) = dB_{j}^{(-\alpha_{j})}(t) + 
 e^{-(X_{j}(t) - X_{j-1}(t))} dt \text{ for } j = 2, \ldots, n.
\end{gather}
We will call $X$ a system of \emph{exponentially reflecting} Brownian motions with a 
(soft) wall 
at the origin. 
We observe that $(Y_1, \ldots, Y_n)$ starts with each co-ordinate at zero and 
that each co-ordinate is strictly positive for all strictly positive times. 
This constructs an entrance law 
for the process $(X_{1}, \ldots, X_{n})$ from negative infinity. 
We will be interested in the invariant measure of this system which 
is related to log partition functions of the 
log-gamma polymer (see Theorem \ref{finite_tmp_invariant_array}).

To prove this we embed exponentially 
reflecting Brownian motions with a wall in a larger 
system of interacting Brownian motions indexed by 
a triangular array $(X_{ij}(t) : i + j \leq n+1, t \geq 0)$
with a unique invariant measure given by a whole field of 
log partition functions for the log-gamma polymer.
The Brownian system that we consider 
(see equation (\ref{defnXarray}) for a formal definition)
involves particles evolving according to 
independent Brownian motions with a drift term which 
depends on the neighbouring particles.
The interactions in the drift terms are one-sided and drawn as $\rightarrow$ 
or $\leadsto$ in Figure \ref{fig_Xarray}
where the particle 
at the point of the arrow has a drift depending on the particle (or wall) 
at the base of the arrow. There are two types of interaction: 
\begin{enumerate}[(i)]
 \item $\rightarrow$ is an exponential drift depending on the difference of the two particles.
This corresponds in a zero-temperature limit to particles which are instantaneously 
reflected in order to maintain an interlacing.
\item $\leadsto$ is a more unusual interaction and corresponds in a zero temperature limit to a weighted 
indicator function applied to the difference of the two particles. The effect of introducing this interaction  
is that the process $X_{ij}$ 
when started from its invariant measure and run in reverse time is given by the process 
where the direction of each interaction is reversed
(see Proposition \ref{time_symmetry}).
\end{enumerate}

\begin{figure}
\centering
 \begin{tikzpicture}[scale = 1.2]
 \node at (4, 4) {$X_{11}$};
 \draw[->] (3.3, 4) -- (3.7, 4);
  \draw[->] (4, 3.7) -- (4, 3.3);
 \node at (3, 4) {$X_{12}$};
 \draw[->] (2.3, 4) -- (2.7, 4);
   \draw[->] (4, 2.7) -- (4, 2.3);
 \node at (2, 4) {$X_{13}$};
 \draw[->] (1.3, 4) -- (1.7, 4);
   \draw[->] (4, 1.7) -- (4, 1.3);
 \node at (1, 4) {$X_{14}$};
 \draw[->] (3.3, 3) -- (3.7, 3);
   \draw[->] (3, 3.7) -- (3, 3.3);
 \node at (4, 3) {$X_{21}$};
 \draw[->] (2.3, 3) -- (2.7, 3);
   \draw[->] (3, 2.7) -- (3, 2.3);
 \node at (3, 3) {$X_{22}$};
 \draw[->] (3.3, 2) -- (3.7, 2);
 \node at (2, 3) {$X_{23}$};
   \draw[->] (2, 3.7) -- (2, 3.3);
 \node at (4, 2) {$X_{31}$};
 \node at (3, 2) {$X_{32}$};
 \node at (4, 1) {$X_{41}$};
 \draw (0, 4) -- (4, 0);
 \path[draw = black, ->, snake it]    (3.3, 1.7) -- (3.7,1.3);
 \path[draw = black, ->, snake it]    (2.3, 2.7) --  (2.7,2.3);
 \path[draw = black, ->, snake it]    (1.3,3.7) -- (1.7, 3.3);
 \path[draw = black, ->, snake it]    (3.3,2.7) -- (3.7, 2.3);
 \path[draw = black, ->, snake it]    (3.3,3.7) -- (3.7, 3.3);
 \path[draw = black, ->, snake it]    (2.3,3.7) -- (2.7, 3.3);
  \draw[->] (0.6, 3.6) -- (0.8, 3.8);
  \draw[->] (1.6, 2.6) -- (1.8, 2.8);
   \draw[->] (2.6, 1.6) -- (2.8, 1.8);
    \draw[->] (3.6, 0.6) -- (3.8, 0.8);
 \end{tikzpicture}
\caption{The interactions in the system $\{X_{ij} : i + j \leq n+1\}$.}
\label{fig_Xarray}
\end{figure}
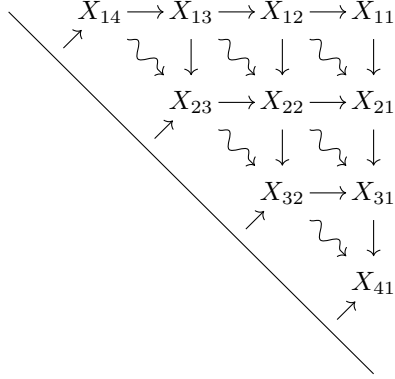

More formally we consider a diffusion process with values in $\mathbb{R}^{n(n+1)/2}$ whose 
generator is an operator $\mathcal{L}$
acting on functions $f \in C_c^{n(n+1)/2}(\mathbb{R})$ according to,
\begin{equation}
\label{defnXarray}
  \mathcal{L} f  = \sum_{\{(i, j):i+j \leq n+1\}}  \frac{1}{2} \frac{d^2 f}{dx_{ij}^2}  + b_{ij}(\mathbf{x}) \frac{df}{dx_{ij}} 
\end{equation}
where $\mathbf{x} = \{x_{ij}:i+j\leq n+1\}$ and
\begin{multline*}
 b_{ij}(\mathbf{x}) = -\alpha_{n-j+1} + \frac{(\alpha_{i-1}+\alpha_{n-j+1}) e^{x_{i j}}}{e^{x_{i-1, j +1}}+ e^{x_{ij}}} 1_{\{i > 1\}} 
 + e^{-(x_{ij} - x_{i, j+1})} 1_{\{i + j < n+1\}}\\
- e^{-(x_{i-1, j} - x_{i j})} 1_{\{i > 1\}} + \frac{1}{2}  e^{-x_{ij}} 1_{\{i + j = n+1\}}.
\end{multline*}


We observe that $\mathcal{L}$ restricted to functions of $(x_{1n}, \ldots, x_{11})$ 
alone is the generator for a system of exponentially reflecting Brownian motions 
with a wall, defined in
(\ref{defnX}, \ref{defnX_2}).

For foundational results on such a system we refer to 
Varadhan \cite{varadhan_lecture_notes} 
(see pages 197, 254, 259-260) which can be summarised in the following Lemma.
\begin{lemma}
\label{multidimensional_diffusions}
Let $L = \frac{1}{2} \Delta f + b\cdot \nabla$
where $b \in C^{\infty}(\mathbb{R}^d, \mathbb{R}^d)$.  
Suppose there exists a smooth function $u : \mathbb{R}^d \rightarrow (0, \infty)$ 
such that $u(x) \rightarrow \infty$ as $\lvert x \rvert \rightarrow \infty$ and 
$L u \leq cu$ for some $c > 0$.  
Then there exists a unique process with generator $L$ 
and the process does not explode.
Suppose furthermore there exists a 
smooth function $\phi$ such that $\phi \geq 0$, $\int_{\mathbb{R}^d} \phi = 1$ and $L^* \phi = 0$
where $L^*f = \frac{1}{2} \Delta f - \nabla \cdot (b f)$, 
then the measure with density $\phi$ is the unique invariant measure for the process
with generator $L$. 
\end{lemma}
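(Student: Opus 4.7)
The plan is to establish the lemma in three steps, following standard diffusion theory: construct the process locally using the smoothness of $b$, use the Lyapunov function $u$ to rule out explosion, verify stationarity of $\phi$ from $L^\ast \phi = 0$, and finally deduce uniqueness of the invariant measure via a strong Feller plus irreducibility argument.

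First I would appeal to classical SDE theory to construct, up to the explosion time $\tau_\infty = \lim_{R \to \infty} \tau_R$ with $\tau_R = \inf\{t \geq 0 : |X_t| \geq R\}$, the unique strong solution of $dX_t = b(X_t)\,dt + dW_t$; the smoothness of $b$ gives local Lipschitz bounds, hence local existence and pathwise uniqueness. To extend globally I apply It\^o's formula to $e^{-ct} u(X_t)$,
\[
d\bigl(e^{-ct} u(X_t)\bigr) = e^{-ct}(Lu - cu)(X_t)\,dt + e^{-ct} \nabla u(X_t) \cdot dW_t,
\]
so by the hypothesis $Lu \leq cu$ the stopped process $(e^{-c(t \wedge \tau_R)} u(X_{t \wedge \tau_R}))_{t \geq 0}$ is a non-negative supermartingale. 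Optional stopping gives $E_{x_0}[u(X_{t \wedge \tau_R})] \leq e^{ct} u(x_0)$, and since $\inf_{|x| \geq R} u(x) \to \infty$ by hypothesis, Markov's inequality yields $P_{x_0}(\tau_R \leq t) \to 0$ as $R \to \infty$. Therefore $\tau_\infty = +\infty$ almost surely and the process is globally defined. Combined with the local Lipschitz property of the coefficients this also settles uniqueness of the associated martingale problem.

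For the second half I would first check stationarity. For $f \in C_c^\infty(\mathbb{R}^d)$ integration by parts gives
\[
\int (Lf)(x)\, \phi(x)\,dx = \int f(x)\, L^\ast \phi(x)\,dx = 0,
\]
so $\int Lf\,d\mu_\phi = 0$ with $d\mu_\phi = \phi\,dx$; together with It\^o's formula this means that under $P_{\mu_\phi}$ the law of $X_t$ is $\mu_\phi$ for every $t$, so $\mu_\phi$ is invariant. Uniqueness requires more: I would exploit that the diffusion matrix is the identity, so $L$ is uniformly elliptic with smooth coefficients, and classical parabolic regularity (equivalently H\"ormander's theorem) gives a smooth, strictly positive transition density $p_t(x, y)$. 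Strict positivity of $p_t(x, \cdot)$ implies the semigroup $P_t$ is both strong Feller and irreducible (every non-empty open set is charged from every starting point). Doob's theorem for strong Feller irreducible Markov semigroups then ensures that there is at most one invariant probability measure, so the density $\phi$ is the unique one.

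The main obstacle is the uniqueness step: existence, non-explosion, and the stationarity of $\phi$ reduce to checking calculations that the hypotheses are essentially designed to make work, whereas uniqueness draws on deeper regularity and irreducibility inputs. An alternative, and for some purposes cleaner route avoiding Doob's theorem, is to argue directly in $L^2(\mu_\phi)$: any other invariant probability measure $\nu$ would be absolutely continuous with respect to $\mu_\phi$ (by the strict positivity of $p_t$), and the Radon--Nikodym density would be $P_t$-invariant in $L^2(\mu_\phi)$; irreducibility then forces this density to be constant, giving $\nu = \mu_\phi$. Either route closes the argument and completes the proof of the lemma.
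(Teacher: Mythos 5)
The paper does not prove this lemma; it quotes it from Varadhan's lecture notes (with explicit page references) as a foundational input, so there is no in-paper argument to compare against. Your reconstruction is a reasonable version of the standard proof and the overall architecture is sound: local existence and pathwise uniqueness from smoothness of $b$, non-explosion from the supermartingale property of $e^{-c(t\wedge\tau_R)}u(X_{t\wedge\tau_R})$, infinitesimal invariance from integrating $Lf$ against $\phi$, and uniqueness from ellipticity via strong Feller plus irreducibility (Doob's theorem or the $L^2$ variant).

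The one place you move too quickly is the passage from $L^*\phi = 0$ to genuine invariance $\mu_\phi P_t = \mu_\phi$. Writing $\int Lf\,d\mu_\phi = 0$ for $f \in C_c^\infty$ gives only \emph{infinitesimal} invariance, and "together with It\^o's formula" does not by itself close the circle: the natural computation $\frac{d}{dt}\int P_t f\,d\mu_\phi = \int L P_t f\,d\mu_\phi$ cannot be set to zero immediately because $P_t f$ is no longer compactly supported, so the integration-by-parts identity does not apply verbatim. One needs an approximation argument (truncation by cutoffs controlled through the Lyapunov function $u$, or a Fokker--Planck uniqueness result for the forward equation) to upgrade infinitesimal invariance to semigroup invariance; non-explosion is exactly what makes this work, and it is known (Bogachev--Krylov--R\"ockner) that without such a condition infinitesimal invariance can fail to imply invariance. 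With that gap acknowledged and filled, your argument matches the content of the cited Varadhan result.
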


\begin{lemma}
\label{non_exp_2}
Let $\mathcal{L}$ be the generator defined in  \eqref{defnXarray}.
There exists a smooth function $u : \mathbb{R}^d \rightarrow (0, \infty)$ 
such that $u(x) \rightarrow \infty$ as $\lvert x \rvert \rightarrow \infty$ and 
$\mathcal{L} u \leq cu$ for some $c > 0$. 
\end{lemma}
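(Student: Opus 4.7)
My plan is to construct an explicit Lyapunov function. The natural candidate, given the exponential structure of the interactions appearing in the drift $b_{ij}$, is
\[
u(\mathbf{x}) = \sum_{(i,j):\, i+j \leq n+1} \bigl( e^{x_{ij}} + e^{-x_{ij}} \bigr).
\]
This function is smooth and strictly positive; each summand is bounded below by $2$ and tends to infinity whenever $|x_{ij}| \to \infty$, so $u(\mathbf{x}) \to \infty$ as $|\mathbf{x}| \to \infty$.

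The next step is to compute $\mathcal{L} u$ term by term and check the inequality. Since $\partial_{x_{ij}} u = e^{x_{ij}} - e^{-x_{ij}}$ and $\partial^2_{x_{ij}} u = e^{x_{ij}} + e^{-x_{ij}}$, the diffusion contribution is exactly $\tfrac{1}{2} u$. The drift contribution $(e^{x_{ij}}-e^{-x_{ij}}) b_{ij}(\mathbf{x})$ splits according to the five pieces of $b_{ij}$. The first two pieces (constant drift plus sigmoid) are uniformly bounded by a constant $C_\alpha$ depending only on $\max_k \alpha_k$, since the sigmoid factor lies in $[0,1]$; they therefore contribute at most $C_\alpha u$. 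The push piece gives
\[
(e^{x_{ij}} - e^{-x_{ij}})\, e^{x_{i,j+1}-x_{ij}} = e^{x_{i,j+1}} - e^{x_{i,j+1}-2x_{ij}},
\]
and dropping the non-positive second summand leaves $e^{x_{i,j+1}}$, which is bounded by a single summand of $u$. The pull piece gives
\[
-(e^{x_{ij}}-e^{-x_{ij}})\, e^{x_{ij}-x_{i-1,j}} = -e^{2x_{ij}-x_{i-1,j}} + e^{-x_{i-1,j}},
\]
and dropping the non-positive first summand leaves $e^{-x_{i-1,j}}$, again bounded by a single summand of $u$. The wall piece gives $\tfrac12(1-e^{-2x_{ij}}) \leq \tfrac12$ per diagonal site, summing to a uniform constant. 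Combining all the contributions yields $\mathcal{L} u \leq C u + C'$ for constants depending only on $n$ and $(\alpha_k)$, and since $u$ is bounded below by a positive constant this sharpens to $\mathcal{L} u \leq c u$ for some $c > 0$.

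The only real subtlety, and what makes this bookkeeping succeed, is to observe that the apparently unbounded exponential interactions all come with the right sign: precisely those contributions that would fail to be dominated by $u$ turn out to be non-positive, reflecting the confining nature of the push and pull interactions in $b_{ij}$. Once this sign structure is noticed the verification of the Lyapunov inequality reduces to the elementary algebra above, so I do not expect a substantial obstacle beyond writing it out.
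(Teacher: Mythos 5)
Your proposal is correct and follows essentially the same approach as the paper: it chooses the same Lyapunov function $u(\mathbf{x}) = \sum_{(i,j)}(e^{x_{ij}} + e^{-x_{ij}})$ and performs the same term-by-term verification, with the unbounded exponential contributions controlled by exactly the sign observations the paper uses (e.g.\ $e^{-(x_{ij}-x_{i,j+1})}(e^{x_{ij}}-e^{-x_{ij}}) \leq e^{x_{i,j+1}}$).
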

Therefore the conditions of Lemma \ref{multidimensional_diffusions}
are satisfied and there exists a unique process with generator $\mathcal{L}$ given by \eqref{defnXarray}
which 
does not explode.

\begin{proof}
 We define the function
 \begin{equation*}
  u(\mathbf{x}) = \sum_{\{(i,j) : i + j \leq n+1\}} e^{x_{ij}} + e^{-x_{ij}}
 \end{equation*}
which satisfies $u(\mathbf{x}) \rightarrow \infty$ as 
$\lvert \mathbf{x} \rvert \rightarrow \infty$. 
The diffusion terms and terms involving a bounded drift can all be easily bounded 
by a constant times $u$. We check this also holds for the terms 
involving unbounded drifts.
The terms involving a wall satisfy, 
\begin{equation*}
 e^{-x_{i, n-i+1}} \frac{du}{dx_{i, n - i+1}} = e^{-x_{i, n - i+1}} (e^{x_{i, n - i+1}}
 - e^{-x_{i, n-i+1}}) \leq 1.
\end{equation*}
The terms involving interlacing interactions between particles satisfy 
\begin{equation*}
 e^{-(x_{ij} - x_{i, j+1})} \frac{du}{x_{ij}}
 = e^{-(x_{ij} - x_{i, j+1})} (e^{x_{ij}} - e^{-x_{ij}}) \leq e^{x_{i, j+1}}
 \leq u(\mathbf{x})
\end{equation*}
and
\begin{equation*}
 -e^{-(x_{i-1, j} - x_{ij})} \frac{du}{dx_{ij}} = -e^{-(x_{i-1, j} - x_{i j})}
 (e^{x_{ij}} - e^{-x_{ij}}) 
 \leq e^{-x_{i-1, j}} \leq u(\mathbf{x}).
\end{equation*}
We sum over all interactions to prove that $u$ has the required properties.
\end{proof}

\subsection{The log-gamma polymer}

The invariant measure of both the exponentially reflecting Brownian motions with a wall defined in 
\eqref{defnX} and \eqref{defnX_2} and the $X$ array defined in 
\eqref{defnXarray} can be described by the 
log-gamma polymer. The log-gamma polymer originated in the work of  
Sepp{\"a}l{\"a}inen \cite{sep} and is defined as follows. Let 
$\{W_{i j}: (i, j) \in \mathbb{N}^2, i + j \leq n+1\}$ be a family of independent inverse
gamma 
random variables with densities,
\begin{equation}
\label{inv_gamma_density}
 P(W_{ij} \in dw_{ij}) = \frac{1}{\Gamma(\gamma_{i, j})}
 w^{-\gamma_{ij}} e^{-1/w_{ij}} \frac{dw_{ij}}{w_{ij}} \qquad \text{ for } w_{ij} > 0
\end{equation}
and parameters 
$\gamma_{ij} = \alpha_i + \alpha_{n - j+1}$. 
Let  $\Pi_n^{\text{flat}}(k, l)$
denote the set of all directed (up and right) paths from the point $(k, l)$ 
to the line $\{(i, j) : i + j = n+1\}$ and define the partition functions 
and log partition functions:
\begin{equation}
\label{defnZeta}
 \zeta_{kl} = \sum_{\pi \in \Pi_n^{\text{flat}}(k, l)} \prod_{(i,j) \in \pi} W_{ij}, 
 \qquad \qquad \qquad
 \xi_{kl} = \log \zeta_{kl}.
\end{equation}
These are the partition functions for a $(1+1)$ dimensional directed polymer in 
a random environment given by $\{W_{i j}: (i, j) \in \mathbb{N}^2, i + j \leq n+1\}$. 


\begin{lemma}
\label{log_gamma_explicit_density}
The distribution of $\xi_{ij}$ given $\xi_{i+1, j} = x_{i+1, j}$ and 
$\xi_{i, j+1} = x_{i, j+1}$ has a 
density with respect to Lebesgue measure proportional to
\begin{equation*}
 \exp\left(-(\alpha_i + \alpha_{n-j+1})x_{ij} - 
 e^{x_{i, j+1} -x_{ij}}
 -e^{x_{i+1, j} -x_{ij}} + (\alpha_i + \alpha_{n-j+1}) \log (e^{x_{i, j+1}} + 
 e^{x_{i+1, j}})\right) .
\end{equation*}
The distribution of the field $(\xi_{i,j} : i + j \leq n +1)$ has a density 
with respect to Lebesgue measure on $\mathbb{R}^{n(n+1)/2}$ proportional to
\begin{IEEEeqnarray*}{rCl}
 \pi(\mathbf{x})& = & \prod_{i + j < n+1} 
 \exp\bigg(-(\alpha_i + \alpha_{n-j+1})x_{ij} - 
 e^{x_{i, j+1} -x_{ij}}
 -e^{x_{i+1, j} -x_{ij}} \\
 && \qquad +  (\alpha_i + \alpha_{n-j+1}) \log (e^{x_{i, j+1}} + 
 e^{x_{i+1, j}})\bigg) 
\cdot \prod_{i=1}^n \exp\left(-2\alpha_{i} x_{i, n - i+1} -
 e^{-x_{i, n - i+1}}\right).
\end{IEEEeqnarray*}
\end{lemma}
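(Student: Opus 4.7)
The proof will be a direct computation exploiting the recursive structure of the log-gamma partition functions and the change of variables formula, together with an induction on anti-diagonals.

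First, I would record the basic recursion. For $i + j < n+1$ the point-to-line partition function defined in \eqref{defnZeta} satisfies
\begin{equation*}
\zeta_{ij} = W_{ij} (\zeta_{i+1, j} + \zeta_{i, j+1}),
\end{equation*}
while on the anti-diagonal $\zeta_{i, n-i+1} = W_{i, n-i+1}$ since the only admissible path from $(i, n-i+1)$ to the line $\{i+j = n+1\}$ is the single point. Taking logarithms,
\begin{equation*}
\xi_{ij} = \log W_{ij} + \log(e^{\xi_{i+1, j}} + e^{\xi_{i, j+1}}) \quad \text{for } i+j < n+1, \qquad \xi_{i, n-i+1} = \log W_{i, n-i+1}.
\end{equation*}
A crucial structural observation is that the random variables $\{W_{ij} : i+j \leq n+1\}$ are independent, so given the values of $\xi_{i+1, j}$ and $\xi_{i, j+1}$, the log-partition function $\xi_{ij}$ depends only on the fresh inverse-gamma variable $W_{ij}$.

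Next I would compute the density of $U := \log W$ when $W \sim \operatorname{InvGamma}(\gamma)$ has density \eqref{inv_gamma_density}. A one-dimensional change of variables gives
\begin{equation*}
P(U \in du) = \frac{1}{\Gamma(\gamma)} e^{-\gamma u - e^{-u}} du.
\end{equation*}
Applying this to $U = \log W_{ij} = \xi_{ij} - \log(e^{x_{i+1, j}} + e^{x_{i, j+1}})$ with $\gamma = \alpha_i + \alpha_{n-j+1}$ and keeping track of the Jacobian (which equals one since the map is a translation in $\xi_{ij}$), the conditional density of $\xi_{ij}$ given $\xi_{i+1, j} = x_{i+1, j}$ and $\xi_{i, j+1} = x_{i, j+1}$ is proportional to
\begin{equation*}
\exp\!\left(-(\alpha_i + \alpha_{n-j+1})(x_{ij} - c) - e^{-(x_{ij} - c)}\right),
\end{equation*}
where $c = \log(e^{x_{i+1, j}} + e^{x_{i, j+1}})$. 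Expanding $e^{-(x_{ij}-c)} = e^{x_{i+1,j} - x_{ij}} + e^{x_{i, j+1} - x_{ij}}$ and distributing the factor involving $c$ yields exactly the claimed conditional density, up to a multiplicative constant not depending on $x_{ij}$.

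For the joint density, I would induct on the anti-diagonals, working from the boundary $\{(i, n-i+1) : 1 \leq i \leq n\}$ inward. For each boundary site, $\xi_{i, n-i+1} = \log W_{i, n-i+1}$ with parameter $2\alpha_i$, so its marginal density is proportional to $\exp(-2\alpha_i x_{i, n-i+1} - e^{-x_{i, n-i+1}})$, contributing the second product in $\pi(\mathbf{x})$. Because the $W_{ij}$ are independent, the joint density of the whole field factors as the product of the boundary marginals and the conditional densities computed above — one factor for each interior site, producing the first product in $\pi(\mathbf{x})$. The main thing to check carefully is the topological point that this recursive conditioning actually generates all of $\xi$ from independent pieces without circularity; this follows because moving from anti-diagonal $i+j = k+1$ to $i+j = k$ only requires the values already constructed on the next anti-diagonal, so each step really is a change of variables from $W_{ij}$ to $\xi_{ij}$ with unit Jacobian. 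No step is especially hard; the mild bookkeeping obstacle is matching the normalisation and checking that the proportionality constants can be absorbed into an overall constant, which is immediate from the independence of the $W_{ij}$.
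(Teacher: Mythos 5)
Your proof is correct and follows essentially the same route as the paper: you record the local update rule $\zeta_{ij} = W_{ij}(\zeta_{i+1,j} + \zeta_{i,j+1})$, pass to logarithms, change variables via the inverse-gamma density \eqref{inv_gamma_density} to obtain the conditional density, and then multiply the conditionals together with the boundary marginals, proceeding along anti-diagonals. This is the argument the paper gives in compressed form; you have simply supplied the change-of-variables detail for $\log W$ and made the induction explicit.
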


\begin{proof}
The partition functions satisfy a local update rule
$\zeta_{ij} = (\zeta_{i, j+1} + \zeta_{i+1, j}) W_{ij}$ and equivalently 
$\xi_{ij} = \log W_{ij} + \log (e^{\xi_{i, j+1}} + e^{\xi_{i+1, j}})$. 
This combined with the explicit density for the inverse gamma density (\ref{inv_gamma_density})
proves the first statement. 
The second part then follows 
by an iterative application of the first part.
\end{proof}

\subsection{The invariant measure of exponentially reflecting Brownian 
motions with a wall and the log-gamma polymer}

\begin{theorem}
\label{finite_tmp_invariant_array}
Let $(X_{ij}(t) : i + j \leq n+1, t \geq 0)$ be the diffusion with 
generator (\ref{defnXarray}). This has a unique invariant measure 
which we denote $(X_{ij}^* : i + j \leq n+1)$ and satisfies
\begin{equation*}
      (X_{ij}^*: i + j \leq n+1) \stackrel{d}{=} (\xi_{i j}: i + j \leq n+1).
     \end{equation*}
A consequence is that $(\xi_{1 n}, \ldots, \xi_{1 1})$ is distributed as the unique 
invariant measure of the system of
exponentially reflecting Brownian motions with a wall, defined in (\ref{defnX}, 
\ref{defnX_2}).
\end{theorem}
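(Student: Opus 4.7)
The plan is to apply Lemma \ref{multidimensional_diffusions}: Lemma \ref{non_exp_2} has already verified the Lyapunov condition, giving non-explosion and uniqueness of the diffusion with generator $\mathcal{L}$, so it remains to exhibit a smooth density $\phi \geq 0$ with $\int \phi = 1$ and $\mathcal{L}^{*}\phi = 0$. I would take $\phi = \pi$, the density of $(\xi_{ij})$ described in Lemma \ref{log_gamma_explicit_density}, and verify the stationary Fokker--Planck equation directly.

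Writing $\pi = e^{-U}$, the equation $\mathcal{L}^{*}\pi = 0$ becomes the pointwise identity
\[
\tfrac{1}{2}\|\nabla U\|^{2} - \tfrac{1}{2}\Delta U - \nabla\cdot b + b\cdot \nabla U = 0.
\]
For each variable $x_{ij}$, the partial derivative $\partial U/\partial x_{ij}$ receives contributions from the three factors of $\pi$ indexed by $(i,j)$, $(i-1,j)$ and $(i,j-1)$, with the wall factor replacing the $(i,j)$ contribution when $i+j=n+1$. I would compute these derivatives site-by-site and match them against the five pieces of $b_{ij}$: the linear drift $-\alpha_{n-j+1}$ against the linear part of $\partial U/\partial x_{ij}$; the two exponential interlacing drifts against the exponentials coming from the neighbouring factors; the rational term $(\alpha_{i-1}+\alpha_{n-j+1})e^{x_{ij}}/(e^{x_{i-1,j+1}}+e^{x_{ij}})$ against the logarithmic-derivative contribution from the $(i-1,j)$ factor; and on the anti-diagonal, the wall term $\tfrac{1}{2}e^{-x_{ij}}$ against the wall factor's derivative.

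The main obstacle is that the process is not reversible: $b_{ij}$ is not simply $-\tfrac{1}{2}\partial U/\partial x_{ij}$, so detailed balance fails and the residual terms must organize themselves into a divergence-free vector field when summed over the array. The cleanest route is to exploit the time-reversal structure suggested by Figure \ref{fig_Xarray}: the generator obtained by reversing every arrow in the diagram is expected to be the $L^{2}(\pi)$-adjoint of $\mathcal{L}$, and verifying this adjoint relation (which is equivalent to $\mathcal{L}^{*}\pi=0$) reduces to telescoping identities across neighbouring sites, with boundary contributions absorbed by the wall factors on the anti-diagonal. I expect this bookkeeping to be the most delicate step.

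For the consequence, the subsystem $(X_{1,n}, \ldots, X_{1,1})$ is autonomous: the drifts $b_{1j}$ depend only on these coordinates and, under the identification $X_{1,n-k+1}\leftrightarrow X_{k}$, coincide with those of the exponentially reflecting Brownian motions with a wall defined by (\ref{defnX}, \ref{defnX_2}). The marginal of $\pi$ on these coordinates is therefore invariant for the wall process and equals the joint distribution of $(\xi_{1,n}, \ldots, \xi_{1,1})$. Uniqueness on this subsystem follows from a second application of Lemma \ref{multidimensional_diffusions}, using the restriction of the Lyapunov function from the proof of Lemma \ref{non_exp_2}.
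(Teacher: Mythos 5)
Your outline matches the paper's strategy point for point: verify $\mathcal L^*\pi=0$ via Lemma \ref{multidimensional_diffusions}, split the drift into a gradient part and a residual, and identify the residual with the ``reverse-all-arrows'' generator from Figure \ref{fig_Xarray}. The paper formalises exactly this by introducing $\mathcal A_S$ as the arrow-reversed generator, writing $b = -\tfrac12\nabla V_S + \tfrac12 d_S$ with $d_S$ the difference of drifts of $\mathcal L_S$ and $\mathcal A_S$, and then proving separately that $\tfrac12(\mathcal L_S+\mathcal A_S)$ is the gradient diffusion for $\pi_S$ (Lemma \ref{gradient_diffusion_lemma}) and that $d_S$ is divergence-free and orthogonal to $\nabla V_S$ (Lemma \ref{drift_potential_orthogonality}). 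Your observation that the top row is autonomous with the same drifts as (\ref{defnX}, \ref{defnX_2}) is correct and is exactly how the paper obtains the consequence.

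The gap is that you stop at precisely the point where the argument has substance. ``Telescoping identities across neighbouring sites'' is not enough of a plan: the divergence-free and orthogonality conditions do not fall out of a naive site-by-site matching, because a given $x_{ij}$ enters the potential $V$ through up to three factors and enters the drift of three neighbouring sites, and the cross terms between the rational piece $(\alpha_{i-1}+\alpha_{n-j+1})e^{x_{ij}}/(e^{x_{i-1,j+1}}+e^{x_{ij}})$ and the interlacing exponentials do not cancel locally. The paper makes this tractable by an inductive decomposition over down-right-bounded subsets $S$: adding one vertex $(i,j)$ changes $V$ by an explicit $V^*$ and $d$ by an explicit $d^*$, and the identity $(d_{S\cup\{(i,j)\}},\nabla V_{S\cup\{(i,j)\}})=0$ is then reduced to checking $(d^*,\nabla V_S)+(d_S,\nabla V^*)+(d^*,\nabla V^*)=0$, which involves only the three coordinates $x_{ij},x_{i,j+1},x_{i+1,j}$ and is a concrete (and nontrivial) cancellation. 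Without that organising induction, or something equivalent, the ``most delicate step'' you flag is not a bookkeeping afterthought but the core of the proof, and it remains unproved in your proposal. Also note that on a nontrivial subsystem you cannot literally invoke Lemma \ref{multidimensional_diffusions} again for uniqueness of the row-$1$ marginal without first producing a Lyapunov function for that autonomous subsystem; restricting $u$ from Lemma \ref{non_exp_2} to those coordinates works, but you should say so, since $\mathcal L u\le cu$ does not automatically restrict.
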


A key role in the proof will be played by inductive decompositions of the generator for the Brownian 
system in 
\eqref{defnXarray} and the explicit density for 
the log-gamma polymer in Lemma \ref{log_gamma_explicit_density}.   
Let $S \subset \mathbb{N}^2 \cap \{(i, j) : i + j \leq n+1\}$ have 
a boundary given by a down-right path in 
the orientation of Figure \ref{update_figure} (the boundary 
is denoted by the dotted line) -- explicitly we require that if
$(i, j) \in S$ then $(i+k, j+l)\in S$ for all $k, l \geq 0$ 
such that $i + j + k + l \leq n+1$.
We can define the log-gamma polymer on $S$ and we denote the density of log partition 
functions on $S$ by $\pi_S(x)$. 
Proposition \ref{log_gamma_explicit_density} proves that
$\pi_S(\mathbf{x})$ is proportional to $\exp(-V_S(\mathbf{x}))\prod_{(i, j) \in S}dx_{ij}$ with 
\begin{IEEEeqnarray*}{rCl}
 V_S(\mathbf{x})& = & \sum_{(i, j) \in S \setminus D_n} 
 \bigg( (\alpha_i + \alpha_{n-j+1})x_{ij} + 
 e^{x_{i, j+1} -x_{ij}}
 + e^{x_{i+1, j} -x_{ij}}   \\
&& \qquad \qquad - (\alpha_i + \alpha_{n-j+1}) \log (e^{x_{i, j+1}} + 
 e^{x_{i+1, j}})\bigg) + \sum_{i=1}^n (2\alpha_{i} x_{i, n - i+1} +
 e^{-x_{i, n - i+1}})
\end{IEEEeqnarray*}
where $D_n = \{(i, j) \in \mathbb{N}^2: i + j = n+1\}$.
We can build the density of the log-gamma polymer inductively by adding an extra vertex $(i, j)$ to $S$ 
and assuming that both $S$ and $S \cup (i, j)$ have down-right boundaries in the orientation 
of Figure \ref{update_figure}. We observe that
$V_{S \cup \{i, j\}} = V_S + V^*$ where
\begin{equation}
\label{Vstar}
V^* =   (\alpha_i + \alpha_{n-j+1}) x_{ij}
 + e^{-(x_{ij} - x_{i+1, j})} + e^{-(x_{ij} - x_{i, j+1})}
 - (\alpha_i + \alpha_{n-j+1}) \log (e^{x_{i, j +1}} + e^{x_{i+1, j}}).
\end{equation}

We now consider an inductive decomposition of the generator in \eqref{defnXarray} which 
is related to the above decompsoition of the log-gamma polymer.
We consider a Brownian system with particles indexed by $S$
which (i) agrees with the process with generator $\mathcal{L}$ when 
$S = \{(i, j) :i+j \leq n+1\}$ and (ii) has an invariant measure with density $\pi_S$. 
The process can be represented by the interactions present in the diagram on the left hand
side of Figure \ref{update_figure}. 
We consider a diffusion with values indexed by $S$ with generator $\mathcal{L}_S$, 
acting on functions $f \in C_c^{n(n+1)/2}(\mathbb{R})$ as follows,
\begin{IEEEeqnarray}{rCl}
 \mathcal{L}_S f & = &  \sum_{(i, j) \in S \setminus D_n} \bigg(\frac{1}{2} \frac{d^2 f}{dx_{ij}^2} - \alpha_{n-j+1}  \frac{df}{dx_{ij}}
 + e^{-(x_{ij} - x_{i, j+1})} \frac{d}{dx_{ij}} - e^{-(x_{ij} - x_{i+1, j})} \frac{d}{dx_{i+1, j}} 
  \IEEEnonumber  \\
  & & 
 \quad + 
 \frac{(\alpha_i + \alpha_{n-j+1}) e^{x_{i+1, j}}}{e^{x_{i+1, j}} + e^{x_{i, j+1}}} \frac{d}{dx_{i+1, j}} \bigg)
   + \sum_{(i, j) \in S \cap D_n} \frac{1}{2} \frac{d^2 f}{dx_{ij}^2} - \alpha_{n-j+1}  \frac{df}{dx_{ij}} 
   + \frac{1}{2}e^{-{x_{ij}}} \frac{df}{dx_{ij}}.
\end{IEEEeqnarray}
For the same class of sets $S$ we consider a second diffusion with generator $\mathcal{A}_S$, 
acting on functions $f \in C_c^{n(n+1)/2}(\mathbb{R})$ as follows,
\begin{IEEEeqnarray}{rCl}
\label{defnA}
 \mathcal{A}_S f & = &  \sum_{(i, j) \in S \setminus D_n} \bigg( \frac{1}{2} \frac{d^2 f}{dx_{ij}^2} - \alpha_{i}  \frac{df}{dx_{ij}}
 + e^{-(x_{ij} - x_{i+1, j})} \frac{d}{dx_{ij}} - e^{-(x_{ij} - x_{i, j+1})} \frac{d}{dx_{i, j+1}} 
  \IEEEnonumber  \\
  & & 
 \quad + 
 \frac{(\alpha_i + \alpha_{n-j+1}) e^{x_{i, j+1}}}{e^{x_{i+1, j}} + e^{x_{i, j+1}}} \frac{d}{dx_{i, j+1}} \bigg)
   + \sum_{(i, j) \in S \cap D_n} \frac{1}{2} \frac{d^2 f}{dx_{ij}^2} - \alpha_{i}  \frac{df}{dx_{ij}} 
   + \frac{1}{2} e^{-{x_{ij}}} \frac{df}{dx_{ij}}.
\end{IEEEeqnarray}

Proposition \ref{multidimensional_diffusions} and Lemma \ref{non_exp_2}
show that there exists unique processes with generators $\mathcal{L}_S$ and 
$\mathcal{A}_S$ and that these processes do not explode. 
The motivation for considering $\mathcal{A}_S$ is 
that the process with this generator will be the 
time reversal of the process with generator $\mathcal{L}_S$ when the process is 
run in its invariant measure $\pi_S$.
The process with operator  $\mathcal{A}_S$
can be represented by a diagram in the same way as $\mathcal{L}_S$ 
in Figure \ref{fig_Xarray},
where for the $\mathcal{A}_S$ process the direction of every 
interaction is reversed.

\begin{figure}
\centering
 \begin{tikzpicture}[scale = 1,   tlabel/.style={pos=0.4,right=-1pt},
 baseline=(current bounding box.center)]
 \filldraw (3,4) circle (1pt);
 \filldraw (2,4) circle (1pt);
 \filldraw (3,3) circle (1pt);
 \filldraw (4,2) circle (1pt);
 \filldraw (3,2) circle (1pt);
 \filldraw (4,1) circle (1pt);
 \filldraw (1,4) circle (1pt);
 \draw[->] (2.3, 4) -- (2.7, 4);
 \draw[->] (1.3, 4) -- (1.7, 4);
   \draw[->] (4, 1.7) -- (4, 1.3);
   \draw[->] (3, 3.7) -- (3, 3.3);
 \draw[->] (2.3, 3) -- (2.7, 3);
   \draw[->] (3, 2.7) -- (3, 2.3);
 \draw[->] (3.3, 2) -- (3.7, 2);
   \draw[->] (2, 3.7) -- (2, 3.3);
 \draw (0, 4) -- (4, 0);
  \path[draw = black, ->, snake it]    (3.3, 1.7) -- (3.7,1.3);
 \path[draw = black, ->, snake it]    (2.3, 2.7) --  (2.7,2.3);
 \path[draw = black, ->, snake it]    (1.3,3.7) -- (1.7, 3.3);
 \path[draw = black, ->, snake it]    (2.3,3.7) -- (2.7, 3.3);
  \draw[->] (0.6, 3.6) -- (0.8, 3.8);
  \draw[->] (1.6, 2.6) -- (1.8, 2.8);
   \draw[->] (2.6, 1.6) -- (2.8, 1.8);
    \draw[->] (3.6, 0.6) -- (3.8, 0.8);
 \draw[dotted](0, 4.2) -- (3.2, 4.2) -- (3.2, 2.2) -- (4.2, 2.2) -- (4.2, 0); 
\end{tikzpicture}
\hspace{1cm}$\Huge\rightarrow$
\begin{tikzpicture}[scale = 1,  tlabel/.style={pos=0.4,right=-1pt},  baseline=(current bounding box.center)]
 \filldraw (3,4) circle (1pt);
 \filldraw (2,4) circle (1pt);
 \filldraw (3,3) circle (1pt);
 \filldraw (4,2) circle (1pt);
 \filldraw (3,2) circle (1pt);
 \filldraw (4,1) circle (1pt);
 \filldraw (1,4) circle (1pt);
 \draw[->] (2.3, 4) -- (2.7, 4);
 \filldraw (4, 3) circle (1pt);
 \draw[->] (3.3, 3) -- (3.7, 3);
  \draw[->] (4, 2.7) -- (4, 2.3);
    \path[draw = black, ->, snake it]    (3.3, 2.7) -- (3.7,2.3);
 \draw[->] (1.3, 4) -- (1.7, 4);
   \draw[->] (4, 1.7) -- (4, 1.3);
   \draw[->] (3, 3.7) -- (3, 3.3);
 \draw[->] (2.3, 3) -- (2.7, 3);
   \draw[->] (3, 2.7) -- (3, 2.3);
 \draw[->] (3.3, 2) -- (3.7, 2);
   \draw[->] (2, 3.7) -- (2, 3.3);
 \draw (0, 4) -- (4, 0);
  \path[draw = black, ->, snake it]    (3.3, 1.7) -- (3.7,1.3);
 \path[draw = black, ->, snake it]    (2.3, 2.7) --  (2.7,2.3);
 \path[draw = black, ->, snake it]    (1.3,3.7) -- (1.7, 3.3);
 \path[draw = black, ->, snake it]    (2.3,3.7) -- (2.7, 3.3);
  \draw[->] (0.6, 3.6) -- (0.8, 3.8);
  \draw[->] (1.6, 2.6) -- (1.8, 2.8);
   \draw[->] (2.6, 1.6) -- (2.8, 1.8);
    \draw[->] (3.6, 0.6) -- (3.8, 0.8);
 \draw[dotted](0, 4.2) -- (3.2, 4.2) -- (3.2, 3.2) -- (4.2, 3.2) -- (4.2, 0); 
 \end{tikzpicture}
  \caption{Updating $\mathcal{L}_S$ to $\mathcal{L}_{S \cup \{(i, j)\}}$.}
 \label{update_figure}
\end{figure}
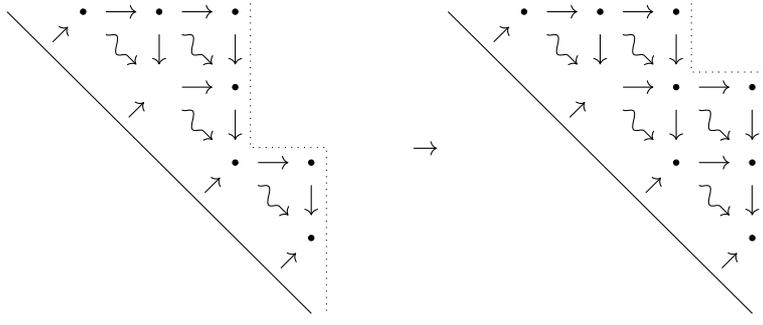
We add in a vertex $(i, j)$ as described in Figure \ref{update_figure}, where we assume that both $S$ and 
$S \cup (i, j)$ have boundaries with down-right paths in the orientation of Figure \ref{update_figure}. 
Then, 
\begin{multline}
\label{decompose_Ls}
 \mathcal{L}_{S\cup\{(i, j)\}} = \mathcal{L}_{S} + \frac{1}{2}\frac{d^2}{d^2 {x_{ij}}} 
 - \alpha_{n-j+1}\frac{d}{dx_{ij}}
 + e^{-(x_{ij}-x_{i,j+1})}\frac{d}{dx_{ij}} \\ - 
 e^{-(x_{ij} - x_{i+1, j})}\frac{d}{dx_{i+1, j}}  +
 \frac{(\alpha_i + \alpha_{n-j+1})e^{x_{i+1, j}}}{e^{x_{i+1, j}} + 
 e^{x_{i, j+1}}} \frac{d}{dx_{i+1, j}}
\end{multline}
and
\begin{multline}
\label{decompose_As}
 \mathcal{A}_{S \cup \{i, j\}} = \mathcal{A}_{S} + 
  \frac{1}{2}\frac{d^2}{d^2 {x_{ij}}} - \alpha_i \frac{d}{dx_{ij}} +
 e^{-(x_{ij}-x_{i+1,j})}\frac{d}{dx_{ij}} \\ - 
 e^{-(x_{ij} - x_{i, j+1})}\frac{d}{dx_{i, j+1}} +
 \frac{(\alpha_i + \alpha_{n-j+1})e^{x_{i, j+1}}}{e^{x_{i+1, j}} + 
 e^{x_{i, j+1}}} \frac{d}{dx_{i, j+1}}.
\end{multline}

\begin{lemma}
\label{gradient_diffusion_lemma}
For any subset $S$ with a down-right boundary in the orientation of Figure \ref{update_figure}, 
the diffusion with generator $\frac{1}{2}(\mathcal{L}_S + \mathcal{A}_S)$ is a gradient diffusion satisfying 
 \begin{equation*}
\frac{1}{2}(\mathcal{L}_S + \mathcal{A}_S) = \frac{1}{2}\Delta_S -\frac{1}{2}\nabla V_S \cdot \nabla_S,   
 \end{equation*}
where $\Delta_S = \sum_{ij \in S} \frac{d^2}{dx_{ij}^2}$ and $\nabla_S = \sum_{ij \in S} \frac{d}{dx_{ij}}$. 
In particular, the process 
with generator $\frac{1}{2}(\mathcal{L}_S + \mathcal{A}_S)$ has
invariant measure given by $\pi_S$ and is reversible when run in its invariant measure.
\end{lemma}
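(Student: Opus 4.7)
My plan is to prove the identity $\frac{1}{2}(\mathcal{L}_S + \mathcal{A}_S) = \frac{1}{2}\Delta_S - \frac{1}{2}\nabla V_S \cdot \nabla_S$ by induction on the cardinality of $S$, using the inductive decompositions \eqref{decompose_Ls} and \eqref{decompose_As} of the generators together with the inductive decomposition $V_{S \cup \{(i,j)\}} = V_S + V^*$ given in \eqref{Vstar}. Once the identity is established, reversibility and the identification of $\pi_S$ as the invariant measure follow from the standard theory of gradient diffusions (together with Lemma \ref{multidimensional_diffusions} to guarantee uniqueness).

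For the base case I would take $S = \{(i, n-i+1)\}$, a single diagonal point. Here $\mathcal{L}_S$ and $\mathcal{A}_S$ coincide and reduce to $\tfrac{1}{2} \partial_{x_{ij}}^2 + (-\alpha_i + \tfrac{1}{2} e^{-x_{ij}})\partial_{x_{ij}}$, while $V_S = 2\alpha_i x_{ij} + e^{-x_{ij}}$; a direct computation of $-\tfrac{1}{2}\nabla V_S$ matches the drift.

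For the inductive step I would compare \eqref{decompose_Ls} and \eqref{decompose_As} term by term after averaging. The key computation is to differentiate $V^*$ in \eqref{Vstar} in the three variables $x_{ij}, x_{i+1,j}, x_{i,j+1}$ that it actually depends on, obtaining
\begin{align*}
\partial_{x_{ij}} V^* &= (\alpha_i + \alpha_{n-j+1}) - e^{-(x_{ij}-x_{i+1,j})} - e^{-(x_{ij}-x_{i,j+1})}, \\
\partial_{x_{i+1,j}} V^* &= e^{-(x_{ij}-x_{i+1,j})} - \frac{(\alpha_i + \alpha_{n-j+1}) e^{x_{i+1,j}}}{e^{x_{i,j+1}} + e^{x_{i+1,j}}}, \\
\partial_{x_{i,j+1}} V^* &= e^{-(x_{ij}-x_{i,j+1})} - \frac{(\alpha_i + \alpha_{n-j+1}) e^{x_{i,j+1}}}{e^{x_{i,j+1}} + e^{x_{i+1,j}}}.
\end{align*}
Half the sum of the new drift terms in $\mathcal{L}_{S \cup \{(i,j)\}} - \mathcal{L}_S$ and $\mathcal{A}_{S \cup \{(i,j)\}} - \mathcal{A}_S$ should then exactly equal $-\tfrac{1}{2}\nabla V^* \cdot \nabla_{S \cup \{(i,j)\}}$, while the new diffusion term matches $\tfrac{1}{2}\partial_{x_{ij}}^2$. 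The crucial feature that makes this work is that in $\mathcal{L}_S$ the interaction at $(i,j)$ contributes a drift $-\alpha_{n-j+1}$ and pushes mass outward to $(i+1, j)$ with the weighted logistic factor, while in $\mathcal{A}_S$ the drift is $-\alpha_i$ and the mass is pushed to $(i, j+1)$ with the complementary weight; averaging the two symmetrises the interaction and produces precisely $\nabla V^*$.

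Given the gradient form, the final sentence is essentially formal: the measure $\pi_S(\mathbf{x})\,d\mathbf{x} \propto e^{-V_S(\mathbf{x})}\,d\mathbf{x}$ satisfies $\frac{1}{2}(\mathcal{L}_S + \mathcal{A}_S)^* \pi_S = 0$ by integration by parts, and the symmetry of the generator in $L^2(\pi_S)$ gives reversibility. I expect the main obstacle to be purely bookkeeping: ensuring that the boundary conventions for sets $S$ with a down-right boundary are respected when adding the corner $(i,j)$, so that the terms removed from $\mathcal{L}_S$ and $\mathcal{A}_S$ in passing to the enlarged set match exactly the ones introduced in \eqref{decompose_Ls} and \eqref{decompose_As}, including the case where $(i,j) \in D_n$ (in which the wall interaction $\tfrac{1}{2} e^{-x_{ij}}$ in both generators combines with the $e^{-x_{i,n-i+1}}$ term in $V_S$).
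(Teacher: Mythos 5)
Your proposal is correct and follows essentially the same inductive route as the paper: decompose $\mathcal{L}_S$, $\mathcal{A}_S$, and $V_S$ via \eqref{decompose_Ls}, \eqref{decompose_As}, \eqref{Vstar}, verify $-\tfrac{1}{2}\nabla V^*$ matches the averaged new drift, and conclude by the standard theory of gradient diffusions. The only (cosmetic) difference is the base case — you start from a single diagonal vertex while the paper starts from the full diagonal $D_n$ — but since diagonal particles are independent in both generators this changes nothing of substance, and your computed components of $\nabla V^*$ agree exactly with the paper's.
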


\begin{proof}
We use the inductive decompositions of $\mathcal{L}, \mathcal{A}$ and $V$ 
to check the Lemma 
inductively. 
For the base case we let $S = \{(i, j): i+j= n+1\}$ and observe that in this case $\mathcal{L}_S = \mathcal{A}_S$ 
and both are the generators for
$n$ independent exponentially reflecting Brownian motions with a wall. 
Then the Lemma follows from:
\begin{equation*}
-\frac{1}{2}\frac{dV_S}{dx_{i, n-i+1}} =  \alpha_i + \frac{1}{2} e^{-(x_{i, n-i+1})}. 
\end{equation*}
For the inductive step we consider a set $S$ with a down-right boundary and add 
an extra vertex $(i, j)$ with the property that $S \cup (i, j)$ also has a  
down-right boundary. We show that
\begin{equation}
\label{gradient_diffusion_check}
\frac{d^2}{dx_{ij}^2} - \nabla V^* \cdot \nabla_{S \cup {(i, j)}}
= \mathcal{L}_{S \cup \{i, j\}} - \mathcal{L}_S 
+  \mathcal{A}_{S \cup \{i, j\}} - \mathcal{A}_S,
\end{equation}
by calculating the non-zero co-ordinates of $\nabla V^*$:
\begin{gather*}
 \frac{dV^*}{dx_{i,j+1}} = e^{-(x_{ij} - x_{i,j+1})} - \frac{(\alpha_i + \alpha_{n-j+1})e^{x_{i, j+1}}}{e^{x_{i,j+1}} + e^{x_{i+1, j}}}, \\ 
 \frac{dV^*}{dx_{i+1, j}} = e^{-(x_{ij} - x_{i+1, j})} - \frac{(\alpha_i + \alpha_{n-j+1})e^{x_{i+1, j}}}{e^{x_{i,j+1}} + e^{x_{i+1, j}}} \\
 \frac{dV^*}{dx_{ij}} = \alpha_i + \alpha_{n-j+1} -e^{-(x_{ij} - x_{i,j+1})}-e^{-(x_{ij} - x_{i+1, j})}
\end{gather*}
and observing that this gives equality with the right hand side of \eqref{gradient_diffusion_check} by 
using equations \eqref{decompose_Ls} and \eqref{decompose_As}.
\end{proof}

\begin{lemma}
\label{drift_potential_orthogonality}
 Let $S$ be a subset $S$ with a down-right boundary in the orientation of Figure \ref{update_figure} and
let $d_{S}$ denote the difference in drifts between $\mathcal{L}_{S}$ 
and $\mathcal{A}_{S}$. Then
\begin{enumerate}[(i)]
 \item The vector field $d_S$ is divergence-free, \begin{equation*}
        \nabla \cdot d_S = 0
       \end{equation*}
 \item The vector fields $d_S$ and $\nabla V_S$ are orthogonal, \begin{equation*}
 (d_S, \nabla V_S) = 0
\end{equation*}
\end{enumerate}
\end{lemma}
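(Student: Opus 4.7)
The plan is to prove both parts by induction on $\lvert S \rvert$, using the inductive decompositions \eqref{decompose_Ls}, \eqref{decompose_As} and \eqref{Vstar}. The base case is $S = D_n$: at a diagonal vertex $(k, n-k+1) \in D_n$ we have $\alpha_{n-l+1} = \alpha_k$, so the drifts of $\mathcal{L}_{D_n}$ and $\mathcal{A}_{D_n}$ coincide. Hence $d_{D_n} \equiv 0$ and both (i) and (ii) hold trivially.

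For the inductive step, take $S$ satisfying both properties and add a vertex $(i, j)$ with $(i+1, j), (i, j+1) \in S$. The decompositions give an explicit formula for $\Delta d := d_{S \cup \{(i, j)\}} - d_S$, whose three nonzero components are supported on $\{(i, j), (i+1, j), (i, j+1)\}$, and $V_{S \cup \{(i, j)\}} = V_S + V^*$. For (i), since $d_S$ does not depend on $x_{ij}$, it suffices to verify $\nabla \cdot \Delta d = 0$. This is a short direct calculation: the $e^{-(x_{ij} - x_\cdot)}$ contributions cancel against the derivative of $\Delta d^{i,j}$, while the softmax terms differentiated at $(i+1, j)$ and $(i, j+1)$ each produce the common factor $(\alpha_i + \alpha_{n-j+1}) e^{x_{i+1,j} + x_{i,j+1}} / (e^{x_{i+1,j}} + e^{x_{i,j+1}})^2$ with opposite signs.

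For (ii), the inductive hypothesis $(d_S, \nabla V_S) = 0$ reduces the claim to showing
\[
(d_S, \nabla V^*) + (\Delta d, \nabla V_S) + (\Delta d, \nabla V^*) = 0.
\]
The crucial algebraic identities $\Delta d^{i+1, j} = -\partial V^* / \partial x_{i+1, j}$ and $\Delta d^{i, j+1} = +\partial V^* / \partial x_{i, j+1}$, checked directly from the formulas for $\Delta d$ and \eqref{Vstar}, allow one to rewrite the sum. Setting $A := \partial V^* / \partial x_{i+1, j}$ and $B := \partial V^* / \partial x_{i, j+1}$, and using $d_S^{i,j} = 0$ and $\partial V_S / \partial x_{ij} = 0$, the sum rearranges as
\[
A \bigl( d_S^{i+1, j} - \partial V_S / \partial x_{i+1, j} - A \bigr) + B \bigl( d_S^{i, j+1} + \partial V_S / \partial x_{i, j+1} + B \bigr) + \Delta d^{i, j} \, \partial V^* / \partial x_{ij}.
\]

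The main obstacle is showing that this final expression vanishes identically. This is where the case analysis lives: the explicit forms of $d_S^{i+1, j}, d_S^{i, j+1}$ and the partial derivatives $\partial V_S / \partial x_{i+1, j}, \partial V_S / \partial x_{i, j+1}$ depend on whether $(i+1, j)$ or $(i, j+1)$ lie on $D_n$, and on whether the adjacent vertices $(i+1, j-1), (i-1, j+1)$ belong to $S$. In every case a term-by-term comparison shows that the wall contributions, the exponential ``reflection'' terms, and the softmax ``log'' contributions appearing in $d_S^{i+1, j}$ and $d_S^{i, j+1}$ match precisely with the analogous terms in $\partial V_S / \partial x_{i+1, j}$ and $\partial V_S / \partial x_{i, j+1}$, leaving exactly the $A^2 - B^2$ and $\Delta d^{i, j} \partial V^*/\partial x_{ij}$ cancellations required. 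This tight matching is forced by the design of $\mathcal{A}_S$ as the time reversal of $\mathcal{L}_S$ under the log-gamma density $\pi_S$.
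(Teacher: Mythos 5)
Your inductive structure matches the paper's exactly: base case $S = D_n$ with $d_{D_n} \equiv 0$, decomposition $d_{S \cup \{(i,j)\}} = d_S + \Delta d$ and $V_{S \cup \{(i,j)\}} = V_S + V^*$, direct verification that $\nabla \cdot \Delta d = 0$, and reduction of (ii) to the three-term sum $(d^*, \nabla V_S) + (d_S, \nabla V^*) + (d^*, \nabla V^*) = 0$. The key sign identities $\Delta d^{i+1,j} = -\partial V^*/\partial x_{i+1,j}$ and $\Delta d^{i,j+1} = +\partial V^*/\partial x_{i,j+1}$ are correct and are exactly what the paper uses, and your algebraic rearrangement into the form $A(d_S^{i+1,j} - \partial V_S/\partial x_{i+1,j} - A) + B(d_S^{i,j+1} + \partial V_S/\partial x_{i,j+1} + B) + \Delta d^{i,j}\,\partial V^*/\partial x_{ij}$ is valid.

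However, your final claim --- that the wall, reflection, and log contributions in $d_S^{i+1,j}$ and $d_S^{i,j+1}$ ``match precisely'' with the corresponding terms in $\partial V_S/\partial x_{i+1,j}$ and $\partial V_S/\partial x_{i,j+1}$ --- is false as stated, and this is exactly where the substance of the proof lies. Compute $d_S^{i+1,j} - \partial V_S/\partial x_{i+1,j}$ explicitly: the term $e^{-(x_{i+1,j} - x_{i+1,j+1})}$ appears with a $+$ sign in $d_S^{i+1,j}$ and with a $-$ sign in $\partial V_S/\partial x_{i+1,j}$, so the difference carries a surviving $+2e^{-(x_{i+1,j} - x_{i+1,j+1})}$. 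Similarly $d_S^{i,j+1} + \partial V_S/\partial x_{i,j+1}$ carries a surviving $-2e^{-(x_{i,j+1} - x_{i+1,j+1})}$. These do not vanish separately; they only cancel across the two groups because of the identity $A\,e^{-x_{i+1,j}} = B\,e^{-x_{i,j+1}}$ (both sides equal $e^{-x_{ij}} - (\alpha_i + \alpha_{n-j+1})/(e^{x_{i,j+1}} + e^{x_{i+1,j}})$). The paper flags precisely this: ``It is useful to consider all terms that involve either $e^{-(x_{i,j+1}-x_{i+1,j+1})}$ or $e^{-(x_{i+1,j}-x_{i+1,j+1})}$ together and all such terms cancel.'' Your description, if carried out literally, would leave a nonzero residual; you need the cross-group cancellation, not a term-by-term match within each bracket.
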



\begin{proof}
We prove both parts inductively. For the base case we let $S = \{(i, j): i+j= n+1\}$
and observe that $\nabla \cdot d_S = 0$ and $(d_S, \nabla V_S) = 0$ both hold because $d_S = 0$. 
For any set $S$ with a down-right boundary, we add in a new vertex $(i, j)$ with the property 
that $S \cup (i, j)$ also has 
a down-right boundary.
For part (i), the difference of drifts inherits an inductive decomposition from
$\mathcal{L}_S$ and $\mathcal{A}_S$:
\begin{equation*}
 d_{S \cup \{(i, j)\}} = d_S + d^* 
\end{equation*}
where $d_S$ is extended to be $\mathbb{R}^{S \cup \{i, j\}}$ valued by setting $d_S(i, j) = 0$.
Every component of $d^*$ is zero except for the following:
\begin{gather}
\label{dstar_1}
 d^*(i, j+1) = e^{-(x_{ij} - x_{i, j+1})} - 
  \frac{(\alpha_i + \alpha_{n-j+1})e^{x_{i, j+1}}}{e^{x_{i, j+1}} + e^{x_{i+1, j}}} \\
  \label{dstar_2}
d^*(i+1, j) = -e^{-(x_{ij} - x_{i+1, j})} + 
  \frac{(\alpha_i + \alpha_{n-j+1})e^{x_{i+1, j}}}{e^{x_{i, j+1}} + e^{x_{i+1, j}}} \\
  d^*(i, j) = 
  \alpha_i - \alpha_{n-j+1} + e^{-(x_{ij} - x_{i, j+1})} - e^{-(x_{ij} - x_{i+1, j})}.
  \label{dstar_3}
\end{gather}
We observe that 
\begin{equation*}
 \nabla \cdot d^* = 0
\end{equation*}
by differentiating (\ref{dstar_1}-\ref{dstar_3}) to obtain the following, 
\begin{gather*}
 \frac{d}{dx_{i, j+1}}  d^* = e^{-(x_{ij}-x_{i, j+1})} + \frac{(\alpha_i + \alpha_{n-j+1})e^{2 x_{i, j+1}}}{(e^{x_{i+1, j}} + e^{x_{i, j+1}})^2}
 -\frac{(\alpha_i + \alpha_{n-j+1})e^{x_{i, j+1}}}{e^{x_{i+1, j}} + e^{x_{i, j+1}}}\\
 \frac{d}{dx_{i+1, j}}  d^* = - e^{-(x_{ij} - x_{i+1, j})} - \frac{(\alpha_i + \alpha_{n-j+1})e^{2 x_{i+1, j}}}{(e^{x_{i+1, j}} + e^{x_{i, j+1}})^2} 
 + \frac{(\alpha_i + \alpha_{n-j+1})e^{x_{i+1, j}}}{e^{x_{i+1, j}} + e^{x_{i, j+1}}}\\
 \frac{d}{dx_{i j}}  d^* = -e^{-(x_{ij} - x_{i, j+1})} + e^{-(x_{ij} - x_{i+1, j})},
\end{gather*}
and observing that the sum equals zero.
Combining this with the inductive hypothesis, that $\nabla \cdot d_S = 0$, shows that $\nabla \cdot d_{S \cup (i, j)} = 0$.

For part (ii), we assume the
inductive hypothesis, that $(d_S, \nabla V_S) =0$, 
and observe that this means $(d_{S \cup (i,j)}, \nabla V_{S \cup (i,j)}) = 0$ is 
equivalent to the following identity: 
 \begin{equation}
\label{drift_potential_identity}
(d^*, \nabla V_S) + (d_S, \nabla V^*) + (d^*, \nabla V^*) = 0.
\end{equation}

We observe that $d^{*}$ and $\nabla V^*$ are only 
non-zero in the co-ordinates $(i, j+1), (i+1, j)$ and $(i,j)$ so we can restrict 
to considering $\nabla V_S$ and $d_S$ in these coordinates.

We observe that by definition
$d_S(i, j) = 0$ and 
\begin{IEEEeqnarray}{rCl}
\label{dS_1}
 d_S(i, j+1) &  = & \alpha_i - \alpha_{n-j} + e^{-(x_{i, j+1} - x_{i, j+2})}1_{\{i + j < n\}}
 - e^{-(x_{i, j+1} - x_{i+1, j+1})}1_{\{i + j < n\}} \IEEEnonumber\\
 & & \quad + \frac{(\alpha_{n-j}+\alpha_{i-1})e^{x_{i,j+1}}}{e^{x_{i,j+1}} + e^{x_{i-1, j+2}}} 1_{\{(i-1, j+1) \in S\}}
 - e^{-(x_{i-1, j+1} - x_{i, j+1})} 1_{\{(i-1, j+1) \in S\}} \\
  d_S(i+1, j) & = & \alpha_{i+1} - \alpha_{n-j+1} + e^{-(x_{i+1, j} - x_{i+1, j+1})}1_{\{i+j < n\}}
 - e^{-(x_{i+1, j} - x_{i+2, j})}1_{\{i+j < n\}} \IEEEnonumber\\
 & & \quad -\frac{(\alpha_{n-j+2} + \alpha_{i+1})e^{x_{i+1, j}}}{(e^{x_{i+1, j}} + e^{x_{i+2, j-1}})} 1_{\{(i+1, j-1) \in S\}}
 + e^{-(x_{i+1, j-1} - x_{i+1, j})} 1_{\{(i+1, j-1) \in S\}}.
 \label{dS_2}
\end{IEEEeqnarray}
The indicator functions correspond to the effect of $\leadsto$ and $\downarrow$ 
or $\rightarrow$ interactions which may or 
may not be present depending on the shape of $S$. 
We also note that for $i + j = n$, then we have $\alpha_i - \alpha_{n-j} =  \alpha_{i+1} - \alpha_{n-j+1} = 0$.


For $i + j < n$, 
the terms in $V_S$ which involve any of $x_{i, j+1}, x_{i+1, j}$ or $x_{ij}$ are given via
the following decompositions: 
\begin{equation}\begin{split}
   \MoveEqLeft
   V_S  = 
   (\alpha_{i} + \alpha_{n-j}) x_{i j+1} + e^{-(x_{i j+1} - x_{i j+2})} 
 + e^{-(x_{i j+1} - x_{i+1 j+1})} 
 + (\alpha_{i+1} + \alpha_{n-j+1})x_{i+1 j} \\
  & + e^{-(x_{i+1 j} - x_{i+1 j+1})} 
 + e^{-(x_{i+1 j} - x_{i+2 j})} 
 + e^{-(x_{i-1 j+1} - x_{i j+1})} 1_{\{(i-1, j+1) \in S\}} \\
 & + e^{-(x_{i+1 j-1} - x_{i+1 j})} 1_{\{(i+1, j-1) \in S\}}  
 - (\alpha_{i-1} + \alpha_{n-j})\log(e^{x_{i j+1}} + e^{x_{i-1, j+2}}) 1_{\{(i-1, j+1) \in S\}} \\
 &   - (\alpha_{i+1} + \alpha_{n-j+2}) \log (e^{x_{i+1 j}} + e^{x_{i+2, j-1}})1_{\{(i+1, j-1) \in S\}} + \tilde{V}_S
 \end{split}
    \label{VS_1}
 \end{equation}
 where $\tilde{V}_S$ does not depend on any of: $x_{i j+1},$ $x_{i+1 j}$, or $x_{ij}$.
For $i + j = n$,
\begin{equation}\begin{split}
 \MoveEqLeft  V_S  = 2
\alpha_{i} x_{i, j+1} + e^{-x_{i, j+1}}+
2\alpha_{i+1} x_{i+1, j} + e^{-x_{i+1, j}} +   e^{-(x_{i-1 j+1} - x_{i j+1})} 1_{\{(i-1, j+1) \in S\}} \\
 & + e^{-(x_{i+1 j-1} - x_{i+1 j})} 1_{\{(i+1, j-1) \in S\}} 
 - (\alpha_{i-1} + \alpha_{n-j})\log(e^{x_{i j+1}} + e^{x_{i-1, j+2}}) 1_{\{(i-1, j+1) \in S\}}  \\
   & - (\alpha_{i+1} + \alpha_{n-j+2}) \log (e^{x_{i+1 j}} + e^{x_{i+2, j-1}})1_{\{(i+1, j-1) \in S\}} + \tilde{V}_S
 \end{split}
  \label{VS_2}
 \end{equation}
where $\tilde{V}_S$ does not depend on any of: $x_{i j+1},$ $x_{i+1 j}$, or $x_{ij}$.

Therefore we will check  (\ref{drift_potential_identity}) by using equations (\ref{Vstar}, \ref{dstar_1}-\ref{dstar_3}, \ref{dS_1}-\ref{dS_2},
\ref{VS_1}-\ref{VS_2}) in the following.
We will first observe that the terms involving indicator functions vanish. 
The terms in $\nabla V_S (i, j+1)$ involving $1_{\{(i-1, j+1) \in S\}}$ are equal to
\begin{equation*}
 \left( e^{-(x_{i-1 j+1} - x_{i j+1})} - \frac{(\alpha_{i-1} + \alpha_{n-j})e^{x_{i j+1}}}{e^{x_{i j+1}} + e^{x_{i-1 j+2}}}\right) 1_{\{(i-1, j+1) \in S\}}.
\end{equation*}
This is the negative of the terms in $d_S(i, j+1)$ involving $1_{\{(i-1, j+1) \in S\}}$  from 
\eqref{dS_1}. We have shown above 
that $\nabla V^* (i, j+1) = d^*(i, j+1)$.
Therefore the terms involving 
indicator functions $1_{\{(i-1, j+1) \in S\}}$ 
cancel in the sum $(d^*, \nabla V_S) + (d_S, \nabla V^*)$. The terms involving $1_{\{(i+1, j-1) \in S\}}$ also
cancel in the sum $(d^*, \nabla V_S) + (d_S, \nabla V^*)$. In this case, $\nabla V^*(i+1, j) = - d^*(i+1, j)$ and the terms involving $1_{\{(i+1, j-1) \in S\}}$ in $\nabla V_S(i+1, j)$ and 
$d_S(i+1, j)$ are equal. 

Therefore it is sufficient to show that equation \eqref{drift_potential_identity} holds 
in the case when neither $(i-1, j+1)$ nor $(i+1, j-1)$ are in $S$.
This is a useful simplification 
and we calculate in this case for $i+j < n$,
\begin{equation*}
 \begin{split}
\MoveEqLeft
 (d^*, \nabla V_S) = \bigg(e^{-(x_{ij} - x_{i j+1})} - 
 \frac{(\alpha_i + \alpha_{n-j+1})e^{x_{i j+1}}}{e^{x_{i j+1}} + e^{x_{i+1 j}}}\bigg)\bigg(\alpha_i + \alpha_{n-j} - 
 e^{-(x_{i j+1} - x_{i j+2})}-e^{-(x_{i j+1} - x_{i+1 j+1)}}\bigg)\\
& +  \bigg(-e^{-(x_{ij} - x_{i+1 j})} + 
 \frac{(\alpha_i + \alpha_{n-j+1})e^{x_{i+1 j}}}{e^{x_{i j+1}} + e^{x_{i+1 j}}}\bigg)\bigg(\alpha_{i+1} + \alpha_{n-j+1} - 
 e^{-(x_{i+1 j} - x_{i+2 j})} -e^{-(x_{i+1 j} - x_{i+1 j+1})}\bigg)  
 \end{split}
\end{equation*}
\begin{equation*}
 \begin{split}
\MoveEqLeft
 (d_S, \nabla V^*) = \bigg(\alpha_i - \alpha_{n-j} + e^{-(x_{i j+1} - x_{i j+2})} - e^{-(x_{i j+1} - x_{i+1 j+1})}\bigg) 
 \bigg( e^{-(x_{ij}-x_{i j+1})} - \frac{(\alpha_i + \alpha_{n-j+1})e^{x_{i j+1}}}{e^{x_{i j+1}} + e^{x_{i+1 j}}}\bigg) \\
 & + \bigg(\alpha_{i+1} - \alpha_{n-j+1} + e^{-(x_{i+1 j} - x_{i+1 j+1})} - e^{-(x_{i+1 j} - x_{i+2 j})}\bigg) 
 \bigg(e^{-(x_{ij}-x_{i+1 j})} - \frac{(\alpha_i + \alpha_{n-j+1})e^{x_{i+1 j}}}{e^{x_{i j+1}} + e^{x_{i+1 j}}}\bigg)  
 \end{split}
\end{equation*}
\begin{equation*}
\begin{split}
\MoveEqLeft
 (d^*, \nabla V^*) = \bigg(e^{-(x_{ij}-x_{i j+1})} - \frac{(\alpha_i + \alpha_{n-j+1})e^{x_{ij+1}}}{e^{x_{i j+1}}+e^{x_{i+1 j}}}\bigg)
 \bigg(e^{-(x_{ij} - x_{i j+1})}- \frac{(\alpha_i + \alpha_{n-j+1})e^{x_{i j+1}}}{e^{x_{i+1 j}} + e^{x_{i j+1}}}\bigg)\\
 & + \bigg(-e^{-(x_{ij}-x_{i+1 j})} + \frac{(\alpha_i + \alpha_{n-j+1})e^{x_{i+1 j}}}{e^{x_{i+1 j}} + e^{x_{i j+1}}}\bigg)
 \bigg(e^{-(x_{ij} - x_{i+1 j})} - \frac{(\alpha_i + \alpha_{n-j+1})e^{x_{i+1 j}}}{e^{x_{i+1 j}} + e^{x_{i j+1}}} \bigg)\\
 & + \bigg( \alpha_i - \alpha_{n-j+1} + e^{-(x_{ij} - x_{i j+1})} - e^{-(x_{ij} - x_{i+1 j})}\bigg) 
 \bigg(\alpha_i + \alpha_{n-j+1} - e^{-(x_{ij} - x_{i j+1})} - e^{-(x_{ij} - x_{i+1 j})}\bigg).
 \end{split}
\end{equation*}
The following (non-obvious) cancellation then proves that equation \eqref{drift_potential_identity} holds.
For $i + j < n$, it is easy to see that all terms involving $e^{-(x_{ij+1} - x_{i j+2})}$ cancel 
and this similarly holds for the terms $e^{-(x_{i+1 j} - x_{i+2 j})}$. It is useful to consider 
all terms that involve either $e^{-(x_{i j+1} - x_{i+1 j+1})}$ 
or $e^{-(x_{i+1 j} - x_{i+1 j+1})}$ together and all such terms cancel. In the case $i+j = n$, none of these terms are present, however, 
there is an extra $-e^{x_{i j+1}}-e^{x_{i+1 j}}$ in $\nabla V_S$ which cancels in $(d^*, \nabla V_S)$. 
The remaining calculation for the cases $i + j < n$ and $i + j = n$ is the same.

Once these cancellations have been performed the left hand side of \eqref{drift_potential_identity}
is a function of $x_{i j+1}, x_{i+1 j}$ and $x_{ij}$ alone, and has a much simpler form.  
In particular, after this cancellation $ (d^*, \nabla V_S) + (d_S, \nabla V^*)$ equals
\begin{equation*}
2\alpha_i\left(e^{-(x_{ij}-x_{i j+1})} - \frac{(\alpha_i + \alpha_{n-j+1})e^{x_{i j+1}}}{e^{x_{i j+1}} + e^{x_{i+1 j}}}\right) +
 2\alpha_{n-j+1} \left(-e^{-(x_{ij}-x_{i+1 j})} + \frac{(\alpha_i + \alpha_{n-j+1})e^{x_{i+1 j}}}{e^{x_{i j+1}} + e^{x_{i+1 j}}}\right).
\end{equation*}
We can observe that $(d^*, \nabla V^*)$ simplifies to equal the negative 
of this: (i) the terms in $(d^*, \nabla V^*)$ that do not involve any $\alpha$ 
parameters cancel; (ii) the terms involving a single $\alpha$ parameter are equal to
\begin{multline*}
-\frac{2(\alpha_i + \alpha_{n-j+1})e^{2x_{i j+1} - 
x_{ij}}}{e^{x_{i+1 j}} + e^{x_{i j+1}}} + 
\frac{2(\alpha_i + \alpha_{n-j+1})e^{2x_{i+1 j} - 
x_{ij}}}{e^{x_{i+1 j}} + e^{x_{i j+1}}} + 2\alpha_{n-j+1} e^{-(x_{ij} - x_{i j+1})} - 2\alpha_{i}e^{-(x_{ij}-x_{i+1 j})} \\
= -2\alpha_{i} e^{-(x_{ij}-x_{i j+1})} + 2\alpha_{n-j+1} e^{-(x_{ij}-x_{i j+1})}  
\end{multline*}
and (iii) the terms involving a product of $\alpha$ parameters are equal to 
\begin{multline*}
 \frac{(\alpha_i + \alpha_{n-j+1})^2e^{2x_{i j+1}}}{(e^{x_{i j+1}} + e^{x_{i+1 j}})^2} -
    \frac{(\alpha_i + \alpha_{n-j+1})^2e^{2x_{i+1 j}}}{(e^{x_{i j+1}} + e^{x_{i+1 j}})^2}
    +\alpha_i^2 - \alpha_{n-j+1}^2 \\
  =  \frac{2\alpha_i(\alpha_i+\alpha_{n-j+1})e^{x_{i j+1}} - 2\alpha_{n-j+1}(\alpha_i + \alpha_{n-j+1})e^{x_{i+1 j}}}{e^{x_{i+1 j}}+e^{x_{i j+1}}}.
\end{multline*}
Therefore \eqref{drift_potential_identity} holds and part (ii) of the Lemma follows by induction. 
\end{proof}

\begin{proof}[Proof of Theorem \ref{finite_tmp_invariant_array}]
Let $S$ be a subset with a boundary given by a down-right path in the orientation of Figure \ref{update_figure}. 
Lemma \ref{gradient_diffusion_lemma} shows that $\frac{1}{2}(\mathcal{L}_S^* + \mathcal{A}_S^*) = 0$ and
Lemma \ref{drift_potential_orthogonality} shows that
\begin{equation*}
 \frac{1}{2}(\mathcal{L}_S^* - \mathcal{A}_S^*) = \frac{1}{2} (\nabla \cdot d_S + (d_S, \nabla V_S)) \pi_S = 0.   
\end{equation*}
As a result $L^*_S \pi_S = 0$ and Lemma \ref{multidimensional_diffusions} proves that  $\pi_S$ is the invariant 
measure for the process with generator $\mathcal{L}_S$. In particular, the case $S = \{(i, j) : i+ j \leq n+1\}$ 
proves the Theorem.
\end{proof}

\begin{proof}[Proof of Theorem \ref{finite_temp_thm}]
A consequence of Theorem \ref{finite_tmp_invariant_array} is that 
\begin{equation*}
 \int_0^{\infty} Z_n(s) \stackrel{d}{=} Y_{n}^* \stackrel{d}{=} 2e^{ X_{11}^*} \stackrel{d}{=} 2 \zeta(1, 1)
\end{equation*}
where $Y_n^*$ is equal in distribution to
$\int_0^{\infty} Z_n(s)$ by the time reversal at the start of this section and by definition $\xi(1, 1) = \log \zeta(1, 1)$.  
The definition of $Z_n$ has $\alpha_1, \ldots, \alpha_n$ in a reversed order 
to the left hand side of Theorem \ref{finite_temp_thm}, however, the distribution of $\zeta(1,1)$
is invariant under reversing the 
order of the parameters --- this follows from the deterministic fact that $\zeta(1, 1)$ 
takes the same value when constructed from the data $\{W_{ij}:i+j \leq n+1\}$ and 
the reflected data $\{W_{ji}:i+j \leq n+1\}$ (in fact the distribution of $\zeta(1,1)$
is left invariant under any permutation of the $\alpha$ parameters as a consequence 
of the same invariance for the process $Z_n$, proven in \cite{o_connell2012}).  
\end{proof}


\subsection{Time reversals and Intertwinings}
The generator $\mathcal{L}$ in \eqref{defnXarray} depends on a sequence of parameters $(\alpha_1, \ldots, \alpha_n)$ 
and we use the notation $(X_{ij}^{(\alpha_1, \ldots, \alpha_n)}(t))_{t \in \mathbb{R}, i + j \leq n+1}$ 
for the process with this generator when we want to make the dependence on the 
$\alpha$ parameters explicit.

%

\begin{proposition}
\label{time_symmetry}
Let $(X_{ij}^{(\alpha_1, \ldots, \alpha_n)}(t) : i + j \leq n +1, t \in \mathbb{R})$ denote the diffusion process 
with generator (\ref{defnXarray}) in stationarity. 
This process has the following properties:
 \begin{enumerate}[(i)]
  \item Time symmetry, \begin{equation}
\label{time_reversal}
 (X_{ij}^{(\alpha_1, \ldots, \alpha_n)}(t))_{t \in \mathbb{R}, i + j \leq n+1} 
 \stackrel{d}{=} (X_{ji}^{(\alpha_n, \ldots, \alpha_1)}(-t))_{t \in \mathbb{R},
 i+j \leq n+1}. 
\end{equation}
\item The marginal distribution of any 
row $(X_{i, n- i+1}, \ldots, X_{i,1})$ run forwards in 
time is a system of exponentially reflecting 
Brownian motions with a wall at the origin with drift vector 
$(-\alpha_{i}, \ldots, -\alpha_n)$.
The marginal distribution of any column 
$(X_{n-j+1, j}, \ldots, X_{1, j})$ 
run backwards in time is a system of exponentially reflecting Brownian motions
with a wall 
at the origin and drift vector $(-\alpha_{n-j+1}, \ldots, -\alpha_1)$.
\end{enumerate}
\end{proposition}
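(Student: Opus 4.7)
The plan is to deduce part (i) from the standard time-reversal formula for stationary diffusions combined with Lemmas \ref{gradient_diffusion_lemma} and \ref{drift_potential_orthogonality}, and then to obtain part (ii) from part (i) together with a closed sub-system argument.

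For part (i), recall that a stationary diffusion with generator $L = \tfrac{1}{2}\Delta + b \cdot \nabla$ and invariant density $e^{-V}$ has time-reversed generator with drift $\tilde b = -\nabla V - b$. Writing $b_{\mathcal{L}}, b_{\mathcal{A}}$ for the drift vector fields of $\mathcal{L}, \mathcal{A}$ on the full triangle, Lemma \ref{gradient_diffusion_lemma} gives $b_{\mathcal{L}} + b_{\mathcal{A}} = -\nabla V$, so $\tilde b = b_{\mathcal{A}}$, and the identity $\mathcal{L}^* \pi = 0$ needed to invoke the reversal formula is precisely what was established in the proof of Theorem \ref{finite_tmp_invariant_array}. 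Hence $\mathcal{A}^{(\alpha)}$ generates the reversed process. It then remains to verify that $\mathcal{A}^{(\alpha)}$ indexed by $(i,j)$ coincides with $\mathcal{L}^{(\alpha_n, \ldots, \alpha_1)}$ indexed by $(j,i)$, which I would do by a direct termwise check: substituting $\beta_k = \alpha_{n-k+1}$ into the drift of $\mathcal{L}^{(\beta)}$ at $(j, i)$, relabelling $y_{ij} := x_{ji}$ (so that $y_{i+1, j} = x_{j, i+1}$, $y_{i, j-1} = x_{j-1, i}$, $y_{i+1, j-1} = x_{j-1, i+1}$), and using $\beta_{n-i+1} = \alpha_i$ and $\beta_{j-1} = \alpha_{n-j+2}$ reproduces the drift of $\mathcal{A}^{(\alpha)}$ at $(i, j)$ term by term: the direct drift $-\alpha_i$, the pushing $e^{-(y_{ij} - y_{i+1, j})}$, the wall $\tfrac{1}{2} e^{-y_{ij}}$ on $i + j = n+1$, and the paired contributions $-e^{-(y_{i, j-1} - y_{ij})}$ and $\tfrac{(\alpha_i + \alpha_{n-j+2}) e^{y_{ij}}}{e^{y_{i+1, j-1}} + e^{y_{ij}}}$ coming from the $(i, j-1)$ site.

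For part (ii), the set $C_{\geq j} = \{(i', j') : j' \geq j,\ i' + j' \leq n+1\}$ has a down-right boundary in the sense used throughout Section \ref{finite_temp_section}, and the drift of every $x_{i'j'}$ with $(i', j') \in C_{\geq j}$ under the full $\mathcal{L}$ depends only on its up, right, and up-right neighbours, which all remain inside $C_{\geq j}$. Consequently the restriction of the stationary process to $C_{\geq j}$ is itself Markov and coincides in law with the sub-process generated by $\mathcal{L}_{C_{\geq j}}$ started from $\pi_{C_{\geq j}}$. Repeating the argument of part (i) for this sub-process identifies its time-reversed generator as $\mathcal{A}_{C_{\geq j}}$. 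Inspecting the leftmost column $\{j' = j\}$ under $\mathcal{A}_{C_{\geq j}}$, the terms that would couple to column $j-1$ are absent since $(i', j-1) \notin C_{\geq j}$, so the drift of $x_{i', j}$ involves only $x_{i', j}$ and $x_{i'+1, j}$; relabelling the column from the wall inward then yields precisely the generator of a system of exponentially reflecting Brownian motions with a wall and drift vector $(-\alpha_{n-j+1}, \ldots, -\alpha_1)$, proving the column statement. The row statement follows by using part (i) to rewrite row $i$ forward in the $(\alpha_1, \ldots, \alpha_n)$ process as column $i$ backward in the $(\alpha_n, \ldots, \alpha_1)$ process, and invoking the column result (which then delivers the drift vector $(-\alpha_i, \ldots, -\alpha_n)$ after undoing the parameter reversal).

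The main obstacle is the termwise verification in part (i): the asymmetric drift $\tfrac{(\alpha_{i-1} + \alpha_{n-j+1})e^{x_{ij}}}{e^{x_{i-1, j+1}} + e^{x_{ij}}}$ in $\mathcal{L}$ and its counterpart in $\mathcal{A}$ must be identified after transposition combined with parameter reversal, and the indicator functions $1_{\{i > 1\}}$, $1_{\{i + j < n+1\}}$, and $1_{\{i + j = n+1\}}$ have to be tracked carefully to ensure the treatments of the wall on the diagonal and of the corner row/column of the triangle align on both sides of the identification.
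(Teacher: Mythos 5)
Your proposal is correct and follows essentially the same route as the paper: for part (i) you use the standard time-reversal formula for a stationary diffusion together with Lemma \ref{gradient_diffusion_lemma} and the identity $\mathcal{L}^*\pi=0$ to identify the reversed generator as $\mathcal{A}$, then verify termwise that transposition combined with parameter reversal carries $\mathcal{A}^{(\alpha)}$ to $\mathcal{L}^{(\alpha_n,\ldots,\alpha_1)}$, and for part (ii) you exploit the same nested Markov sub-array $C_{\geq j}$ (the paper deduces the column result by applying part (i) to this sub-array and reading off its top row, while you read off the boundary column of $\mathcal{A}_{C_{\geq j}}$ directly, which is equivalent). One cosmetic slip: in the orientation of Figure \ref{fig_Xarray} the column $\{j'=j\}$ is the \emph{rightmost}, not leftmost, column of $C_{\geq j}$, but this has no bearing on the argument.
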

In particular, for equal drifts, part (i) proves that the top particle has the same 
 distribution when run started from its invariant measure either 
 forward or backwards in time: $(X_{11}(t))_{t \in \mathbb{R}} \stackrel{d}{=} (X_{11}(-t))_{t \in \mathbb{R}}$.
This fact does not strike us a priori because the SDEs (\ref{defnX}, \ref{defnX_2}) do not appear to define a reversible diffusion  
unless $n = 1$.


\begin{proof}
The reversed time dynamics of the
 process started in its invariant measure is a Markov process with 
 generator $\hat{\mathcal{L}}$ given by the Doob h-transform of the adjoint 
 generator with respect to its invariant measure, in particular, 
 $\hat{\mathcal{L}}f = \frac{1}{\pi}
 \mathcal{L}^*(\pi f)$. 
 Let $b$ be the drift of the process with generator $\mathcal{L}$ and $a$ the drift of the process 
with generator $\mathcal{A}$ (where we define $\mathcal{A} = \mathcal{A}_S$ when $S = \{(i, j) : i + j \leq n+1\}$).
 The Doob h-transform simplifies due to the fact that $\mathcal{L}^* \pi = 0$ 
 and we obtain 
\begin{equation*}
 \hat{\mathcal{L}}= \frac{1}{2}\Delta + \left(-b -\nabla V\right) \cdot \nabla =  \frac{1}{2} \Delta + a \cdot \nabla 
\end{equation*}
where we use that $-\nabla V = a + b$ from Lemma \ref{gradient_diffusion_lemma}. 
Therefore the time reversal of the process with generator $\mathcal{L}$ 
is the process with generator $\mathcal{A}$.  
The process with generator $\mathcal{A}$ is represented by Figure \ref{update_figure} where
the direction of every interactions is reversed. This 
is equivalent to swapping the $ij$-th particle with the 
$ji$-th particle and reversing the order of the parameters.
This proves part (i).
 
We first prove part (ii) for the columns of the $X$ array. 
When run forwards in time the $X$ array has a nested structure in which particles 
do not depend on particles to the right of them. This means that when 
considering a particular column, say $(X_{n-k+1 k}, \ldots, X_{1k})$, 
we can restrict to a subarray 
$(X_{ij} : j \geq k, i+j \leq n+1)$ where this is 
the rightmost column. 
The top row of this subarray run forwards in time is 
a system of exponentially reflecting Brownian motions with a wall
with drift vector $(-\alpha_1, \ldots, -\alpha_{n-k+1})$.
Combining this with 
the time reversal in part (i) proves that the column $(X_{n-k+1 k}, \ldots, X_{1k})$ run backwards in time 
is a system of exponentially reflecting Brownian motions with a wall
with drift vector $(-\alpha_{n-k+1}, \ldots, -\alpha_{1})$. This proves the result for every column in the $X$ array. 
The result for rows then follows from the time reversal in part (i). 
%
%
\end{proof}
This easily extends to show that the time reversal of the process with generator $\mathcal{L}_S$ when run in its invariant 
measure $\pi_S$ is 
the process with generator $\mathcal{A}_S$ for any subset $S$ with a down-right boundary. 

Let $Q^n_t$ denote the transition semigroup for $n$ exponentially reflecting Brownian motions with a wall. 
Considering the process $(X_{ij} : i + j \leq n+1)$ run in stationarity 
leads to an intertwining between $Q^{n-1}_t$ and $Q^{n}_t$. 
The intertwining kernel is given by the transition kernel of a Markov chain  constructed from the  point-to-line log-gamma polymer as follows.
The log partition functions form a Markov chain $(\mathbf{\xi}_k)_{ 1\leq k \leq n}$ 
 where $\mathbf{\xi}_k = (\xi(k, n-k+1), \ldots, \xi(k, 1))$.  The Markov property  for this chain follows from the
local update rule for partition functions 
$\zeta_{ij} = (\zeta_{i j+1} + \zeta_{i+1 j}) W_{ij}$ and
equivalently for the log partition functions
$\xi_{ij} = \log W_{ij} + \log (e^{\xi_{i j+1}} + e^{\xi_{i+1 j}})$. 
 We let $P_{k-1 \rightarrow k}$ denote the 
 transition kernel  for this chain.

We start the process $(X_{ij})_{t \in \mathbb{R}, i+j \leq n+1}$ 
in stationarity and consider 
two different ways of calculating the probability density function 
of the vector 
\begin{equation}
\label{intertwining_vector}
P (X_{n-1, 2}(0) \in dx_{n-1, 2}, \ldots, X_{1, 2}(0) \in dx_{1 2}, X_{n, 1}(t) \in dz_{n1}, \ldots, X_{1, 1}(t) \in dz_{11}).
\end{equation}
Let $\mathbf{x}_2 = (x_{n-1, 2}, \ldots, x_{12})$ and let $\mathbf{z}_1 = (z_{n1}, \ldots, z_{11})$.
\begin{enumerate}[(i)]
 \item Calculate \eqref{intertwining_vector} by integrating over $X_{n, 1}(0), \ldots, X_{1, 1}(0)$  
as an intermediate step. When run forwards in time, the evolution of the top row of the $X$ array is independent
of the rest of the array due to the direction of interactions. Therefore $(X_{n-1, 2}(0), \ldots, X_{1, 2}(0)$ 
and $X_{n, 1}(t), \ldots, X_{1, 1}(t))$ are conditionally independent given  $X_{n, 1}(0), \ldots, X_{1, 1}(0)$. 
Letting $\mathbf{x}_1 = (x_{n1}, \ldots, x_{11})$ the probability density \eqref{intertwining_vector} equals
\begin{equation}
\label{intertwining_calculation1}
\int P_{n-1 \rightarrow n}(\mathbf{x}_2, 
 \mathbf{x}_{1})
  Q^n_t(\mathbf{x}_{1}, \mathbf{z}_{1}) d\mathbf{x}_{1} 
\end{equation}
\item Calculate \eqref{intertwining_vector} by integrating over $X_{n-1, 2}(t), \ldots, X_{1, 2}(t)$ as an intermediate step.
When run backwards in time, the evolution of the second row is not affected by the top row 
of the $X$ array. Therefore $(X_{n-1, 2}(0), \ldots, X_{1, 2}(0)$ 
and $X_{n, 1}(t), \ldots, X_{1, 1}(t))$ are conditionally independent given $X_{n-1, 2}(t), \ldots, X_{1, 2}(t)$.
Letting $\mathbf{z}_2 = (z_{n-1, 2}, \ldots, z_{12})$ the probability density \eqref{intertwining_vector} equals
\begin{equation}
\label{intertwining_calculation2}
\int   Q^{n-1}_t(\mathbf{x}_{2}, \mathbf{z}_{2}) P_{n-1 \rightarrow n}(\mathbf{z}_2, 
 \mathbf{z}_{1})
 d\mathbf{z}_{2} 
\end{equation}
\end{enumerate}

The equality of \eqref{intertwining_calculation1} and \eqref{intertwining_calculation2} proves an intertwining
between $Q_t^{n-1}$ and $Q_t^n$ with intertwining kernel $P_{n-1 \rightarrow n}$. 
This can be expressed in operator notation as
\begin{equation*}
 Q_t^{n-1} P_{n-1 \rightarrow n} = P_{n-1 \rightarrow n} 
 Q_t^{n}.  
\end{equation*}

\subsection{Zero-temperature limits}
We can take a zero temperature limit of the construction we have considered above. 
In the limit, particles follow the coupled system of SDEs: for $j = 1, \ldots, n$,
\begin{equation*}
 dX_{1j}(t) = dB_{1j}(t) - \alpha_{n-j+1} dt + dL_{1j}^1(t)  
 \end{equation*}
 and for $i > 1$ and $ i + j \leq n+1$, 
 \begin{equation*}
 dX_{ij}(t) = dB_{ij}(t) - \alpha_{n - j+1} 1_{\{X_{ij} < X_{i-1, j+1}\}}dt 
 + \alpha_{i-1} 
 1_{\{X_{ij} > X_{i-1, j+1}\}}dt + dL_{i j}^1(t) - dL_{ij}^2(t) 
\end{equation*}
where (i) $L_{ij}^1$ is the local time process at zero of 
       $X_{ij}- X_{i, j-1}$ for $i+j < n+1$, (ii) $L_{ij}^1$ is the local time process at zero of 
$X_{ij}$ for $i+j = n$, and (iii) $L_{ij}^2$ is the local time process at zero of 
$X_{ij} - X_{i-1, j}$
for $i \geq 2$.
This process can be represented by Figure \ref{fig_Xarray} where the 
interaction $\rightarrow$ is now reflection and the interaction
$\leadsto$ is now a weighted indicator function.
The zero-temperature limit of the field of 
log partition functions is the field of point-to-line last passage percolation times 
$\{G(i, j): i + j \leq n+1\}$ (see \cite{bisi_thesis, bisi_zygouras}) and 
it is natural to expect that $\{G(i, j): i + j \leq n+1\}$ is the invariant measure 
of $\{X_{i j}: i + j \leq n+1\}$. However, we do not prove this because the 
discontinuities in the drifts means that the conditions for 
Lemma \ref{multidimensional_diffusions} are no longer satisfied. 
Instead, we argue that a second proof of Theorem \ref{equality_law} can be provided as
a zero temperature limit 
of Theorem 
\ref{finite_tmp_invariant_array}. We can introduce an extra inverse 
temperature parameter $\beta$ 
into the definitions of the processes $X, Y$ and $Z$ given in this section and the 
results of this Section continue to hold. In particular,
Theorem \ref{finite_tmp_invariant_array} 
and the time reversal in Section \ref{finite_temp_time_reversal} establish that
\begin{equation*}
\frac{1}{\beta} \log  \int_{0 = s_0 < s_1 \ldots < s_n < \infty} e^{\beta \sum_{i=1}^n B_i^{(-\alpha_i)}(s_i) - B_i^{(-\alpha_i)}(s_{i-1})}
 ds_1 \ldots ds_n \stackrel{d}{=} \frac{1}{\beta} \log 2 \sum_{\pi \in \Pi_n^{\text{flat}}} \prod_{(i, j) \in \pi} W_{ij}^{(\beta)}
\end{equation*}
where $\{W_{ij}^{(\beta)} : i + j \leq n+1\}$ are random variables with inverse gamma distributions and rates 
$\beta^{-1}(\alpha_i + \beta_j)$.
As $\beta \rightarrow \infty$, the left hand side
converges almost surely by Laplace's Theorem 
and the right hand side 
converges by \cite{bisi_thesis, bisi_zygouras} to give,
\begin{equation*}
 \sup_{0 = s_0 \leq \ldots \leq s_n < \infty} \sum_{i=1}^n B_i^{(-\alpha_i)}(s_i) - B_i^{(-\alpha_i)}(s_{i-1})
 \stackrel{d}{=} \max_{\pi \in \Pi_n^{\text{flat}}} \sum_{(i,j) \in \pi} W_{ij}.
\end{equation*}
The time reversal in Proposition \ref{time_change_refl} allows the distribution of the left hand side to be 
identified as $Y_n^*$. This argument is easily extended to prove Theorem \ref{equality_law} in its entirety.

\section{Further random matrix interpretations}
\label{symmetric_lue_section}

We now discuss an alternative version of Theorem \ref{sup_lpp} that 
connects two families of random matrices. Let $X$ be a 
symmetric complex matrix of size $n \times n$ 
where for $i < j$ the entries $X_{ij}$ are independent complex Gaussian with  
mean zero and variance given by $\frac{1}{2(\alpha_i + \alpha_j)}$ and the entries
along the diagonal $X_{ii}$ are independent complex Gaussian with mean 
zero and variance $\frac{1}{2\alpha_i}$. 
We call the matrix $X^* X$ a 
perturbed symmetric LUE matrix. 
In the case when the $\alpha_i$ are distinct, we will show
the eigenvalues of $X^* X$ have a density with respect to Lebesgue measure given by 
 \begin{equation}
 \label{eigenvalue_law}
 f(\lambda_1, \ldots, \lambda_n) = \frac{ \prod_{i = 1}^n  \alpha_i 
 \prod_{i < j}(\alpha_i + \alpha_j)}{\prod_{i < j} (\alpha_i - \alpha_j)} 
 \text{det}(e^{-\alpha_i \lambda_j})_{i, j =1}^n.
 \end{equation}
When some of the $\alpha_i$ coincide this can be evaluated as a limit and in
the case when all $\alpha_i$ are equal it agrees with the eigenvalue 
density of LOE.
Our interest in this random matrix ensemble arises from the connection of its 
eigenvalue density to point-to-line last passage percolation.
In the case when 
the parameters are equal, a similar case
appears in Theorem 7.7 of \cite{baik_rains} but with a different variance along the diagonal for 
the random matrix model and different rates along the diagonal for the exponential data -- that the variances and rates 
along the diagonal can be tuned is a property of RSK (for example, see Chapter 10 of \cite{forrester_book})
and that the sum of diagonal 
entries is the trace of a matrix.
Point-to-point last passage percolation with inhomogeneous rates for the 
exponential data was related to random matrices 
with inhomogeneous variances in \cite{borodin_peche, dieker_warren}.

To calculate the eigenvalue density we compute the Jacobian (see Chapter 1 of 
\cite{forrester_book} for related examples),
\begin{equation*}
\label{eigenvalues_Jacobian}
dX \propto
\prod_{j < k} \lvert \lambda_k - \lambda_j \rvert \prod_j d\lambda_j d\Omega
\end{equation*}
of the transformation from matrix elements $X$ to the eigenvalues $\lambda$ 
and angular variables $\Omega$. 
The choice of parameters ensures the distribution on matrices 
can be expressed as a trace,
\begin{equation*}
 P(X) = c_n \prod_{i = 1}^n \alpha_i 
 \prod_{i < j}(\alpha_i + \alpha_j)\exp\left(-\sum_{i=1}^n 
 \alpha_i \lvert x_{ii} \rvert^2 - \sum_{i < j} 
 (\alpha_{i}
 + \alpha_j) \lvert x_{ij} \rvert^2 \right) d\mathbf{x} 
 \propto \exp\left(-\text{Tr}(A X^* X) \right) d\mathbf{x}
\end{equation*}
where $d\mathbf{x}$ is Lebesgue measure on the independent (complex)
entries $(x_{ij} : i \leq j)$ of the matrix $X$,
the matrix $A = \text{diag}(\alpha_1, \ldots \alpha_n)$ and 
$c_n$ is a constant.
Let the singular value decomposition be given by $X = U D U^T$ where 
$U \in \mathbb{U}(n)$ the set of $n \times n$ unitary matrices, 
$D = \text{diag}(\sqrt{x_1}, \ldots, \sqrt{x_n})$
is the diagonal matrix consisting of the singular values of $X$
and the singular
value decomposition takes this form due to the symmetry of $X$ (also referred to as 
the Autonne-Takagi factorisation). Let 
$V = U^T \in \mathbb{U}(n)$ and $\Lambda = D^2 = \text{diag}(x_1, \ldots, x_n)$. 
The joint density of eigenvalues is given by 
\begin{equation*}
 f(\lambda_1, \ldots, \lambda_n) = \int_{V \in \mathbb{U}(n)} e^{-\text{Tr}(A V \Lambda V^*)} 
 \Delta(x) dV = \frac{ \prod_{i = 1}^n  \alpha_i 
 \prod_{i < j}(\alpha_i + \alpha_j)}{\prod_{i < j} (\alpha_i - \alpha_j)} 
 \text{det}(e^{-\alpha_i \lambda_j})_{i, j = 1}^n
\end{equation*}
where the integral over the unitary group is calculated by the 
Harish-Chandra-Itzykson-Zuber formula.

This agrees with the density of the output of RSK 
when applied to last passage percolation with symmetric exponential data with modified 
rates along the diagonal as described in Section \ref{nr_br}. 
Therefore 
we obtain the following extension of Theorem \ref{sup_lpp}:
\begin{proposition}
\label{symmetric_lue}
Let $\xi_{\text{max}}$ 
 denote the largest eigenvalue of a perturbed symmetric LUE matrix 
 with parameters $\alpha_i$, let $(H(t):t \geq 0)$ be an $n \times n$ 
 Hermitian Brownian motion, let $D$ be an $n \times n$ diagonal matrix 
 with diagonal entries $\alpha_j > 0$ for each $j = 1, \ldots, n$
 and let $e_{ij}$ be 
 an independent collection of 
exponential random variables indexed by the lattice $\mathbb{N}^2$
with rate $\alpha_i + \alpha_{n + 1 -j}$.
Then 
 \begin{equation*}
  2\sup_{t \geq 0} \lambda_{\max}(H(t) - t D) \stackrel{d}{=} 
  2 \max_{\pi \in \Pi_n^{\text{flat}}}
  \sum_{(i j) \in \pi} e_{ij} \stackrel{d}{=}  \xi_{\max}.
 \end{equation*}
\end{proposition}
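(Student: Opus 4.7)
The first equality in the statement is exactly the content of Theorem \ref{sup_lpp} (whose proof is already in hand by combining Theorem \ref{equality_law} with the time reversal in Proposition \ref{time_change_refl}), so the new content lies entirely in the second equality, which asserts that the point-to-line last passage percolation time agrees in law with one half of the largest eigenvalue of the perturbed symmetric LUE matrix $X^*X$.

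My plan is to follow the same two-step RSK/symmetrisation route used in Section \ref{nr_br} to handle the equal-drift case, but now with inhomogeneous rates. First, I would invoke the symmetrisation identity \eqref{symmetry}, which rewrites
\begin{equation*}
2 \max_{\pi \in \Pi_n^{\text{flat}}} \sum_{(ij) \in \pi} e_{ij} = \max_{\pi \in \Pi_n} \sum_{(ij) \in \pi} \hat{e}_{ij}
\end{equation*}
where $\hat{e}_{ij}$ is the symmetric environment with halved rates on the anti-diagonal. Applying the inhomogeneous RSK correspondence to this symmetric array, as reviewed in Section \ref{nr_br}, yields an output shape $\nu$ whose first coordinate $\nu_1$ equals the right hand side, and (for distinct drifts $\alpha_i$) whose density is precisely
\begin{equation*}
f_{\mathrm{RSK}}(x_1, \ldots, x_n) = \frac{\prod_{i = 1}^n \alpha_i \prod_{i < j}(\alpha_i+\alpha_j)}{\prod_{i < j}(\alpha_i - \alpha_j)} \det(e^{-\alpha_i x_j})_{i,j=1}^n.
\end{equation*}
Second, I would carry out the random matrix computation already sketched in the paragraphs preceding the proposition: using the Autonne--Takagi factorisation $X = U D U^T$ together with the Jacobian $dX \propto \prod_{j<k}|\lambda_k - \lambda_j| \prod_j d\lambda_j\, d\Omega$ and the Harish-Chandra--Itzykson--Zuber integral to evaluate the resulting integral over the unitary group, obtaining exactly the same determinantal density for the eigenvalues of $X^*X$. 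Comparing marginals of the largest coordinate then gives the desired equality in law of $\xi_{\max}$ with $2\max_\pi \sum e_{ij}$ in the case of pairwise distinct $\alpha_i$.

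To pass from distinct $\alpha_i$ to the general case $\alpha_i > 0$, I would use a continuity argument analogous to the one at the end of the proof of Theorem \ref{equality_law}: the law of $\max_\pi \sum e_{ij}$ depends continuously on the rate vector since it is a finite composition of maxima and sums of exponentials, while the distribution of $\xi_{\max}$ depends continuously on the variance parameters (the matrix entries are continuous functions of the $\alpha_i$ in the appropriate sense), so the equality in law extends to coincident parameters. The determinantal density formula itself remains valid in the limit by the standard l'H\^opital evaluation that sends $\det(e^{-\alpha_i x_j})/\prod_{i<j}(\alpha_i-\alpha_j)$ to a confluent form; in particular, when all $\alpha_i=1$ one recovers the LOE density.

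The main obstacle I expect is a careful bookkeeping of constant factors in the random matrix computation: one must correctly identify the reference measure on the symmetric complex matrix $X$, correctly handle the factor of two between $X^*X$ and its singular-value-squared diagonal, and verify that the Autonne--Takagi factorisation (for \emph{complex symmetric} rather than Hermitian matrices) yields the same Jacobian $\prod_{j<k}|\lambda_k-\lambda_j|$ and that the HCIZ integral over $U\in\mathbb{U}(n)$ with $V=U^T$ produces the determinant $\det(e^{-\alpha_i\lambda_j})$ with no spurious factors. Once that accounting is correct, the match with $f_{\mathrm{RSK}}$ is automatic and both equalities follow.
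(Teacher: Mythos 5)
Your proposal follows the same route as the paper: the first equality is Theorem~\ref{sup_lpp}, and the second is obtained by matching the RSK density $f_{\mathrm{RSK}}$ for the symmetrised environment (via the identity \eqref{symmetry}) against the eigenvalue density \eqref{eigenvalue_law} computed through the Autonne--Takagi factorisation and the Harish-Chandra--Itzykson--Zuber integral. The paper leaves the second step implicit in the discussion preceding the proposition, but the content and ordering of ideas are the same; your additional continuity argument for coincident parameters is a harmless refinement of the confluent-limit remark already present in the paper.
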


There does not appear to be any 
process level equality between a vector of last passage percolation times and 
the largest eigenvalues of minors 
of either (i) the perturbed symmetric LUE or (ii) 
the Laguerre orthogonal ensemble (nor does the connection between last passage 
percolation and LOE generalise to 
non-equal rates).

\section{Distribution of the largest particle}
\label{toda_lattice_section}

In this section we consider 
the 
distribution of the largest particle of
the system of reflected Brownian motions with a wall in its invariant
measure. This has a number of alternative representations from 
Theorem \ref{equality_law}, Proposition \ref{time_change_refl} 
and Proposition \ref{symmetric_lue} in particular as a point-to-line 
last passage percolation time.
A variety of expressions have been found for this
in \cite{baik_rains, bisi_zygouras, bfps, forrester2004, johnstone} which are 
convenient for asymptotic analysis.
The expression that arises most naturally from  
Theorem \ref{invariant_measure} is an expression in terms of the $\tau$-function 
of a Toda lattice given
in Forrester and Witte, Section 5.4 of \cite{forrester2004} 
(also see Proposition 10.8.1 of Forrester \cite{forrester_book}). Their result is
part of a more general and powerful theory developed in a series of papers 
(see \cite{forrester2004} and the references within); however, it is natural
to see how expressions in terms of a Toda lattice arise from Theorem \ref{invariant_measure}
in an elementary manner.

\begin{proposition}
\label{largest_particle}
Let $F(x) = P(Y_n^* \leq x) = P(G(n, n) \leq x)$. 
 \begin{enumerate}[(i)]
  \item When the drifts are equal $\alpha_1 = \ldots = \alpha_n$, this is given by a Wronskian
  \begin{IEEEeqnarray*}{rCl}
 F(x)   = \text{det}(f_{i-1}^{(j-2)}(x))_{i, j = 1}^n 
\end{IEEEeqnarray*}
where the functions $f_i^{(j)}$ are defined in equation (\ref{f_defn})
and $f^{(-1)}(x) = \int_0^x f(u) du$.
Furthermore, this is the $\tau$-function for a Toda lattice 
equation, 
\begin{IEEEeqnarray*}{rCl}
F(x)   
& = & \frac{1}{Z} e^{-nx} x^{-n^2/2 + n/2}
\text{det}\left(\left(
x \frac{d}{dx}\right)^{i+j-2}  \sqrt{\frac{2}{\pi}} \sinh(x) \right)_{i, j = 1}^n 
\end{IEEEeqnarray*}
where $Z$ is a normalisation constant.
\item When the drifts are distinct, 
 \begin{equation*}
  F(x) = e^{-\sum_{i=1}^n \alpha_i x} \text{det}\left(
  \begin{matrix}
   f_1(x) & D^{\alpha_1} f_1(x) & \ldots & D^{\alpha_1 \ldots \alpha_{n-1}} f_1(x) \\
   f_2(x) & D^{\alpha_1} f_2(x) & \ldots & D^{\alpha_1 \ldots \alpha_{n-1}} f_2(x) \\
   \vdots & \vdots & \ddots & \vdots \\
   f_n(x) & D^{\alpha_1} f_n(x) & \ldots & D^{\alpha_1 \ldots \alpha_{n-1}} f_n(x) 
  \end{matrix}\right)_{i, j = 1}^n
 \end{equation*}
 where $f_i(x) = e^{\alpha_i x} - e^{-\alpha_i x}$.
 \end{enumerate}
\end{proposition}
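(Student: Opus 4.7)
The plan is to compute $F(x)=P(Y_n^*\le x)$ by directly integrating the joint densities of $(Y_1^*,\ldots,Y_n^*)$ supplied by Proposition \ref{invariant_measure} over $W_n^+\cap\{x_n\le x\}$, and then, for part (i), to recognise the resulting Wronskian as a Toda $\tau$-function. I would proceed by iterated integration in the order $x_1,x_2,\ldots,x_{n-1},x_n$.

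For part (ii), if one first distributes the exponential weight $e^{-\sum\alpha_ix_i}$ column-by-column into the determinant in $\pi(\mathbf{x})$, the $(i,j)$ entry becomes
\[
e^{-\alpha_jx_j}D^{\alpha_1\ldots\alpha_j}f_i(x_j)=\tfrac{d}{dx_j}\bigl(e^{-\alpha_jx_j}D^{\alpha_1\ldots\alpha_{j-1}}f_i(x_j)\bigr),
\]
a total $x_j$-derivative, and the boundary condition $f_i(0)=0$ makes the lower endpoint drop out. Integrating $x_1$ from $0$ to $x_2$ therefore ``demotes'' the first column to $e^{-\alpha_1x_2}f_i(x_2)$. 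At each subsequent step several columns of the updated determinant depend on the variable being eliminated; here I would expand the determinant via Leibniz, integrate by parts in the column that is still a derivative, and use antisymmetry in the relevant row indices to cancel the non-boundary residuals --- the same device that underlies Lemma \ref{ibp_lemma_inhomogeneous}(ii). After all $n$ integrations the $j$-th column has been demoted to $e^{-\alpha_jx}D^{\alpha_1\ldots\alpha_{j-1}}f_i(x)$, and factoring the exponential out of each column recovers the formula in part (ii) (up to the prefactor $1/\prod_{i<j}(\alpha_i-\alpha_j)$ inherited from $\pi$). Part (i) follows by the same scheme with the homogeneous operators $\partial^{j-1}$, using the analogous identity $\int_0^a f_{i-1}^{(j-1)}(u)\,du=f_{i-1}^{(j-2)}(a)$; the base case $n=1$ gives $F(x)=1-e^{-2\alpha_1x}=f_0^{(-1)}(x)$, and the $n=2$ case can be checked by hand to confirm that the antisymmetric IBP discards exactly the symmetric bilinear residual.

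For the Toda form in part (i), I would exploit the recursion $\mathcal{G}^*f_{i+1}=f_i$ together with $f_0(x)=2e^{-2x}$ and the boundary data $f_i(0)=f_i'(0)=0$. Under the substitution $h_i(x)=e^xf_i(x)$ the recursion becomes $(\partial_x^2-1)h_{i+1}=2h_i$, whose homogeneous solutions are $\sinh$ and $\cosh$; the boundary conditions select a representation of $h_i$ built from iterated applications of the Green's function of $\partial^2-1$ to $\sqrt{2/\pi}\sinh(x)$. Factoring $e^{-x}$ from each row of the Wronskian produces the prefactor $e^{-nx}$, and converting ordinary derivatives into Euler operators via $\partial_x^k=x^{-k}(x\partial_x)^k+\text{(lower order)}$ reshapes the Wronskian column-by-column into the Hankel-type determinant of $(x\partial_x)^{i+j-2}\sqrt{2/\pi}\sinh(x)$; the lower-order corrections are killed by row and column operations, and the column scalings collect the weight $x^{-n(n-1)/2}=x^{-n^2/2+n/2}$. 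The normalising constant $Z$ is fixed by requiring $F(\infty)=1$, or equivalently by appealing directly to the $\tau$-function formulae of Forrester--Witte for the LOE hard-edge gap probability that are cited in the paper.

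I expect the main obstacle to be the antisymmetric integration-by-parts step at every intermediate stage: one must verify carefully that the symmetric residuals do cancel when summed against the signs of the permutations, and that the surviving boundary contributions reassemble into the next determinant in the iteration. A secondary technicality is the change of variables to the Euler operator in the Toda form, where the non-leading corrections from $\partial_x^k=x^{-k}(x\partial_x)^k+\cdots$ must be cleared by explicit row and column operations before the Hankel structure becomes manifest.
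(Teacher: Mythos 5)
Your plan for part~(ii) and for the Wronskian formula in part~(i) is workable in principle but chooses the harder integration order. The paper eliminates variables from the top down, integrating $x_n$ from $x_{n-1}$ to $x$ first, then $x_{n-1}$, and so on. This choice is what makes the computation trivial: after $x_n$ is integrated out (and the boundary term removed by a single column operation, since it is a scalar multiple of the $(n-1)$st column), the last column depends only on the fixed parameter $x$, so every subsequent step involves exactly one column depending on the integration variable, with a boundary term always matching the immediately preceding column up to a factor $e^{(\alpha_{j-1}-\alpha_j)x_{j-1}}$. You instead integrate $x_1$ first, so that after $k$ steps the first $k$ columns all depend on $x_{k+1}$, which you then have to eliminate alongside column $k+1$. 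You recognise this and propose the antisymmetric integration-by-parts device of Lemma~\ref{ibp_lemma_inhomogeneous}(ii); this does work (your $n=2$ check is correct and the symmetric bilinear residuals do cancel against the permutation signs), but it is considerably more laborious for general $n$, and you have not actually carried it out. You would save yourself all of that trouble by reversing the order.

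For the Toda $\tau$-function identity in part~(i), your outline has a genuine inaccuracy: you cannot simply ``factor $e^{-x}$ from each row'' of $\det(f_{i-1}^{(j-2)}(x))$ to produce $e^{-nx}$, because the column index controls derivatives that act on the full product $f_{i-1}=e^{-x}h_{i-1}$, so Leibniz cross-terms appear. The paper handles this by keeping the exponential inside the derivative operators --- its intermediate identity equates the Hankel determinant with $\det\bigl(\frac{d^{j-1}}{dx^{j-1}}\bigl(((x\frac{d}{dx})^{i-1}\sqrt{x}I_{1/2}(x))e^{-x}\bigr)\bigr)$ --- and then matches this to the Wronskian row by row via row operations. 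The engine of that matching is the explicit identity
\begin{equation*}
xf_i(x)+xf_i'(x)+\int_0^x f_i(u)\,du=(i+1)f_{i+1}(x)+c_if_i(x)+\cdots+c_1f_1(x),
\end{equation*}
proved by applying $\mathcal{G}^*$ and using $\mathcal{G}^*f_{i+1}=f_i$ with the boundary data. You do not identify this identity; you gesture at a Green's-function representation of $h_i$ built from iterated application of the resolvent of $\partial^2-1$ to $\sinh$, which is a plausible alternative way to organise the induction, but as written your argument leaves both the exponential-prefactor manipulation and the row-reduction step unresolved, so the Toda part of your proposal is not yet a proof.
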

For the interpretation in terms of the Toda lattice equation we 
let $g[n](x) =  \text{det}((x \frac{d}{dx})^{i+j-2} 
 \sqrt{\frac{2}{\pi}} \sinh(x))_{i, j = 1}^n$ and observe that $g$ 
 solves the Toda lattice equation, 
\begin{equation*}
 \left(x \frac{d}{dx}\right)^2 \log g[n] = \frac{g[n+1]g[n-1]}{g[n]^2}
\end{equation*}
with $g[0] = 1$ and 
$g[1](x) = \sqrt{\frac{2}{\pi}} \sinh(x)$.
The Toda lattice equation is often expressed in terms of
$I_{1/2}$ the modified Bessel function of the 
first kind by $I_{1/2}(x) = (\sqrt{2/\pi x}) \sinh(x).$

%
%

\begin{proof}[Proof of Proposition \ref{largest_particle}]
In the homogeneous case we obtain from Theorem \ref{invariant_measure} that
\begin{equation*}
P(Y_n^* \leq x)  =   P(Y_1^*, \ldots, Y_n^* \leq x) 
= \int_{x_1 \leq \ldots x_n \leq x} \text{det}(f_{i-1}^{(j-1)}(x_j))_{i, j = 1}^n 
dx_1 \ldots dx_n.\\
\end{equation*}
We perform the integral in $x_n$ which leads to an integrand given by 
a determinant where the last column in the determinant above has been 
replaced by $f_{i-1}^{(n-2)}(x) - 
f_{i-1}^{(n-2)}(x_{n-1})$. The second term can be removed 
from the last column by column 
operations. This procedure, of integration and column operations, 
can be applied iteratively to 
the variables $x_{n-1}, \ldots, x_1$ and leads to the required formula.
In the inhomogeneous case we apply the same steps: 
in particular, we obtain from Theorem \ref{invariant_measure} that
\begin{equation*}
 P(Y_n^* \leq x)  =   P(Y_1^*, \ldots, Y_n^* \leq x) 
 = \int_{x_1 \leq \ldots \leq x_n \leq x} e^{-\sum_{i=1}^n \alpha_i x_i} \text{det}(D_{x_j}^{\alpha_1 \ldots \alpha_j}
  f_i(x_j))_{i, j = 1}^n dx_1 \ldots dx_n.
\end{equation*}
We perform the integral in $x_n$ which replaces the last column of the determinant 
by
$e^{-\alpha_n x} D^{\alpha_1 \ldots \alpha_{n-1}} f_i(x) 
  - e^{-\alpha_n x_{n-1}} D^{\alpha_1 \ldots \alpha_{n-1}} f_i(x_{n-1}).$
The second term can be removed from the last column by 
column operations and the results follows by iteratively applying this procedure in 
the variables $x_{n-1}, \ldots, x_1$.

We now show the second expression in part (i) is equal to the first expression in (i) by 
a series of row and column operations.
We observe that applying a series of column operations shows that 
\begin{equation}
\label{toda_eq1}
e^{-nx} x^{-n^2/2 + n/2} \text{det}\left(\left(x 
\frac{d}{dx}\right)^{i+j-2} \sqrt{x} I_{1/2}(x)
 \right)_{i, j = 1}^n      = \text{det}\left(
\frac{d^{j-1}}{dx^{j-1} }\left(\left(\left(x \frac{d}{dx}\right)^{i-1} 
\sqrt{x} I_{1/2}(x)\right) e^{-x}\right)\right)_{i, j = 1}^n
\end{equation}
where we can apply column operations to the left hand side in order to obtain
that the application of $(x \frac{d}{dx})^{j-1}$
in the $j$-th column is equivalent to the application of 
$x^{j-1} \frac{d^{j-1}}{dx^{j-1}},$ and after this observation, the $x^{j-1}$ term in each 
column can be brought 
outside of the determinant to cancel the polynomial prefactor. The exponential prefactor on the left hand side can be brought inside the 
determinant and, using column operations, inside the derivative operators 
$\frac{d^{j-1}}{dx^{j-1}}$.

We prove by induction on $i$ that we can add on multiples of rows $(1, \ldots, i-1)$ to  
the $i$-th row of the matrix on the right hand side of (\ref{toda_eq1})
to obtain equality with the
the matrix $(f_{i-1}^{(j-2)}(x))_{i, j = 1}^n.$
We only need to check this for the entry in the first column since 
both sides of (\ref{toda_eq1}) 
share the same derivative structure in columns. 
We observe that equality holds (without any row operations)
for the first row: $\sqrt{x} I_{1/2}(x) e^{-x} = f_0^{(-1)}(x)$. 
Assuming the inductive hypothesis, for each $i \geq 0$ the entry 
in the $(i+2)$-nd row and $2$-nd column on the right hand side of (\ref{toda_eq1}) 
is given by $ x f_i(x) + xf_i'(x) + f_i(x) + \int_0^x f_i(u) du$ 
by using the relationships between the entries of the matrix -- in particular, we 
assume the entry in the $(i+1)$-st row and $2$-nd column is given by $f_i$; then
integrate to find the entry in the $(i+1)$-st row and first column; 
we then  
find the entry in the $(i+2)$-nd row and first column 
as $e^{-x} x\frac{d}{dx}(e^{x} f_i^{(-1)}(x)) = xf_i(x) + xf_i^{(-1)}(x)$, and
differentiate to 
find the entry in the $(i+2)$-nd row and $2$-nd column stated above.
To simplify this expression, we prove the following identity: there exist constants $c_1, \ldots, c_{i}$ 
such that
\begin{equation}
\label{identity}
 x f_i(x) + xf_i'(x) + \int_0^x f_i(u) du = ( i+1) f_{i+1}(x) + c_i f_i(x) + \ldots 
 + c_1 f_1(x)
\end{equation}
which shows that after applying row operations the matrix will be in the required 
form (the factor of $(i+1)$ can be absorbed into the normalisation constant). 
We note that the function $f_0$ is \emph{not} used on the right hand side.
After applying these row operations 
the entry in the $(i+2)$-nd row and $1$-st column will be given by 
$f^{(-1)}_{i+1}$ by using an additional boundary condition:
that the entries in the first column of the matrix on the 
right hand side of (\ref{toda_eq1}) are all zero at zero.
We prove equation (\ref{identity}) by induction and let 
\begin{equation*}
h_{i+1}(x)  = x f_i(x) + xf_i'(x) + \int_0^x f_i(u) du
\end{equation*}
For the base case of the identity, observe that 
$f_1(x) = xf_0(x) + xf_0'(x) + \int_0^x f_0(u) du$ from  
$f_0(x) = e^{-2x}$ and an explicit expression for $f_1(x) = -xe^{-2x} + \frac{1}{2} 
- \frac{1}{2} e^{-2x}$. 
For the inductive step, observe that 
\begin{equation*}
\mathcal{G}^* h_{i+1}(x) = x \mathcal{G}^* f_i(x) + x \mathcal{G^*} f_i'(x) + \mathcal{G^*} \int_0^x f_i(u) du + 
f_i''(x) + 2f_i'(x) + f_i(x) 
= h_i(x) + f_i(x) + 2f_{i-1}(x)  
\end{equation*}
where the second equality follows by using the defining property 
of the $f_i$, namely that $\mathcal{G}^* f_{i} = f_{i-1}$, and $\mathcal{G^*} \int_0^x f_i(u) du = \int_0^x f_{i-1}(u) du$
by an additional boundary condition that both sides are zero at zero.
The inductive hypothesis 
means there exists constants such that $h_i = if_{i}
+ \tilde{c}_{i-1} f_{i-1} + \ldots + \tilde{c}_1 f_1$. Therefore 
$\mathcal{G}^* h_{i+1}$ can 
be expressed in terms of the functions $f_1, \ldots, f_{i}$, and we can 
choose the constants $c_i, \ldots, c_1$ in equation (\ref{identity}) such that the
operator $\mathcal{G}^*$ applied to the right 
hand side of (\ref{identity}) agrees with $\mathcal{G}^* h_{i+1}$.  
The boundary conditions $h_{i+1}(0) = h_{i+1}'(0) = 0$ also
agree with the right hand side of equation (\ref{identity}). Therefore this completes 
the proof of the identity and in turn this identity then proves the Proposition.
%
\end{proof}

\paragraph{Acknowledgements.} WF is supported by EPSRC as part of the MASDOC DTC, Grant No. EP/HO23364/1.

\nocite{*}

 \bibliographystyle{abbrv}
 \bibliography{loe_bib}

\begin{thebibliography}{10}

\bibitem{interlacing_diffusions}
T.~Assiotis, N.~O'Connell, and J.~Warren.
\newblock Interlacing diffusions.
\newblock arXiv:1607.07182.

\bibitem{baik_deift_johansson}
J.~Baik, P.~Deift, and K.~Johansson.
\newblock On the distribution of the length of the longest increasing
  subsequence of random permutations.
\newblock {\em J. Amer. Math. Soc.}, 12:1119--1178, 1999.

\bibitem{baik_rains}
J.~Baik and E.~M. Rains.
\newblock Algebraic aspects of increasing subsequences.
\newblock {\em Duke Math. J.}, 109(1):1--65, 2001.

\bibitem{baryshnikov}
Y.~M. Baryshnikov.
\newblock {GUE}s and queues.
\newblock {\em Probab. Theory Related Fields}, 119(2):256--274, 2001.

\bibitem{biane2005}
P.~Biane, P.~Bougerol, and N.~O'Connell.
\newblock Littelmann paths and {B}rownian paths.
\newblock {\em Duke Math. J.}, 130(1):127--167, 2005.

\bibitem{bisi_thesis}
E.~Bisi.
\newblock {\em Random polymers via orthogonal {W}hittaker and symplectic
  {S}chur functions}.
\newblock PhD thesis, University of Warwick, 2018.
\newblock arXiv:1810.03734.

\bibitem{bisi_zygouras}
E.~Bisi and N.~Zygouras.
\newblock Point-to-line polymers and orthogonal {W}hittaker functions.
\newblock {\em Trans. Amer. Math. Soc.}, 371:8339--8379, 2019.

\bibitem{bcf}
A.~Borodin, I.~Corwin, and P.~Ferrari.
\newblock Free energy fluctuations for directed polymers in random media in 1 +
  1 dimension.
\newblock {\em Comm. Pure Appl. Math.}, 67:1129--1214, 2014.

\bibitem{borodin_corwin_remenik}
A.~Borodin, I.~Corwin, and D.~Remenik.
\newblock Log-gamma polymer free energy fluctuations via a {F}redholm
  determinant identity.
\newblock {\em Commun. Math. Phys.}, 324:215--232, 2013.

\bibitem{mult_functionals}
A.~Borodin, I.~Corwin, and D.~Remenik.
\newblock Multiplicative functionals on ensembles of non-intersecting paths.
\newblock {\em Ann. Henri Poincaré}, 51(1):28--58, 2015.

\bibitem{five_author}
A.~Borodin, P.~Ferrari, M.~Prähofer, T.~Sasamoto, and J.~Warren.
\newblock Maximum of {D}yson {B}rownian motion and non-colliding systems with a
  boundary.
\newblock {\em Electron. Commun. Probab.}, 14:486--494, 2009.

\bibitem{bfps}
A.~Borodin, P.~L. Ferrari, M.~Prähofer, and T.~Sasamoto.
\newblock Fluctuation properties of the {TASEP} with periodic initial
  configuration.
\newblock {\em J. Stat. Phys.}, 129:1055--1080, 2007.

\bibitem{borodin_peche}
A.~Borodin and S.~Péché.
\newblock {A}iry kernel with two sets of parameters in directed percolation and
  random matrix theory.
\newblock {\em J. Stat. Phys.}, 132(2):275--290, 2008.

\bibitem{budhijara_lee}
A.~Budhiraja and C.~Lee.
\newblock Long time asymptotics for constrained diffusions in polyhedral
  domains.
\newblock {\em Stochastic Process. Appl.}, 117(8):1014--1036, 2007.

\bibitem{corwin_survey}
I.~Corwin.
\newblock The {K}ardar-{P}arisi-{Z}hang equation and universality class.
\newblock {\em Random Matrices Theory Appl. 1}, 2012.

\bibitem{cosz}
I.~Corwin, N.~O’Connell, T.~Seppäläinen, and N.~Zygouras.
\newblock Tropical combinatorics and {W}hittaker functions.
\newblock {\em Duke Math. J.}, 163(3):513--563, 2014.

\bibitem{dai1992}
J.~G. Dai and J.~M. Harrison.
\newblock Reflected {B}rownian motion in an orthant: Numerical methods for
  steady-state analysis.
\newblock {\em Ann. Appl. Probab.}, 2(1):65--86, 1992.

\bibitem{dieker2009}
A.~Dieker and J.~Moriarty.
\newblock Reflected {B}rownian motion in a wedge: sum-of-exponential stationary
  densities.
\newblock {\em Electron. Commun. Probab.}, 14:1--16, 2009.

\bibitem{dieker2008}
A.~B. Dieker and J.~Warren.
\newblock Determinantal transition kernels for some interacting particles on
  the line.
\newblock {\em Ann. Henri Poincaré}, 44(6):1162--1172, 2008.

\bibitem{dieker_warren}
A.~B. Dieker and J.~Warren.
\newblock On the largest-eigenvalue process for generalized {W}ishart random
  matrices.
\newblock {\em ALEA}, 6:369--376, 2009.

\bibitem{dupuis_williams}
P.~Dupuis and R.~J. Williams.
\newblock Lyapunov functions for semimartingale reflecting {B}rownian motions.
\newblock {\em Ann. Probab.}, 22(2):680--702, 1994.

\bibitem{ferrari}
P.~Ferrari.
\newblock Polynuclear growth on a flat substrate and edge scaling of {GOE}
  eigenvalues.
\newblock {\em Comm. Math. Phys.}, 252:77--109, 2004.

\bibitem{ferrari_frings}
P.~Ferrari and R.~Frings.
\newblock Perturbed {GUE} minor process and {W}arren’s process with drifts.
\newblock {\em J. Stat. Phys.}, 154:356--377, 2014.

\bibitem{ferrari_spohn_survey}
P.~L. Ferrari and H.~Spohn.
\newblock Random growth models.
\newblock In {\em The Oxford handbook of random matrix theory}, pages 782--801.
  Oxford Univ. Press, 2011.

\bibitem{forrester_book}
P.~J. Forrester.
\newblock {\em Log-Gases and Random Matrices (LMS-34)}.
\newblock Princeton University Press, 2010.

\bibitem{forrester2004}
P.~J. Forrester and N.~S. Witte.
\newblock Application of the $\tau$-function theory of {P}ainlevé equations to
  random matrices: $\rm {P}_{\rm vi}$, the {JUE}, {CyUE}, {cJUE} and scaled
  limits.
\newblock {\em Nagoya Math. J.}, 174:29--114, 2004.

\bibitem{glynn_whitt}
P.~W. Glynn and W.~Whitt.
\newblock Departures from many queues in series.
\newblock {\em Ann. Appl. Probab.}, 1(4):546--572, 1991.

\bibitem{gravner_tracy_widom}
J.~Gravner, C.~A. Tracy, and H.~Widom.
\newblock Limit theorems for height fluctuations in a class of discrete space
  and time growth models.
\newblock {\em J. Stat. Phys.}, 102(5-6):1085--1132, 2001.

\bibitem{harrison1987}
J.~M. Harrison and R.~J. Williams.
\newblock Multidimensional reflected {B}rownian motions having exponential
  stationary distributions.
\newblock {\em Ann. Probab.}, 15(1):115--137, 1987.

\bibitem{johansson_2}
K.~Johansson.
\newblock Shape fluctuations and random matrices.
\newblock {\em Comm. Math. Phys.}, 209:437--476, 2000.

\bibitem{Johansson2003}
K.~Johansson.
\newblock Discrete polynuclear growth and determinantal processes.
\newblock {\em Commun. Math. Phys.}, 242:277--329, 2003.

\bibitem{johansson2010}
K.~Johansson.
\newblock A multi-dimensional {M}arkov chain and the {M}eixner ensemble.
\newblock {\em Ark. Mat.}, 48(1):79--95, 2010.

\bibitem{johansson_rahman}
K.~Johansson and M.~Rahman.
\newblock Multi-time distribution in discrete polynuclear growth.
\newblock arXiv:1906.01053.

\bibitem{johnstone}
I.~M. Johnstone.
\newblock On the distribution of the largest eigenvalue in principal components
  analysis.
\newblock {\em Ann. Statist.}, 29(2):295--327, 2001.

\bibitem{konig2005}
W.~König.
\newblock Orthogonal polynomial ensembles in probability theory.
\newblock {\em Probab. Surveys}, 2:385--447, 2005.

\bibitem{matsumoto_yor}
H.~Matsumoto and M.~Yor.
\newblock Exponential functionals of {B}rownian motion, i: Probability laws at
  fixed time.
\newblock {\em Probab. Surveys}, 2:312--347, 2005.

\bibitem{remenik_nguyen}
G.~B. Nguyen and D.~Remenik.
\newblock Non-intersecting {B}rownian bridges and the {L}aguerre {O}rthogonal
  {E}nsemble.
\newblock {\em Ann. Inst. H. Poincaré Probab. Statist.}, 53(4):2005--2029,
  2017.

\bibitem{nguyen_zygouras}
V.-L. Nguyen and N.~Zygouras.
\newblock Variants of geometric {RSK}, geometric {PNG}, and the multipoint
  distribution of the log-gamma polymer.
\newblock {\em International Mathematics Research Notices},
  2017(15):4732--4795, 2017.

\bibitem{oconnell_yor}
N.~O'Connell and M.~Yor.
\newblock Brownian analogues of {B}urke's theorem.
\newblock {\em Stochastic Process. Appl.}, 96(2):285--304, 2001.

\bibitem{o_connell2002}
N.~O'Connell and M.~Yor.
\newblock A representation for non-colliding random walks.
\newblock {\em Electron. Commun. Probab.}, 7:1--12, 2002.

\bibitem{o_connell2012}
N.~O’Connell.
\newblock Directed polymers and the quantum {T}oda lattice.
\newblock {\em Ann. Probab.}, 40(2):437--458, 2012.

\bibitem{oconnell_sep_zygouras}
N.~O’Connell, T.~Seppäläinen, and N.~Zygouras.
\newblock Geometric {RSK} correspondence, {W}hittaker functions and symmetrized
  random polymers.
\newblock {\em Invent. math.}, 197(2):361--416, 2014.

\bibitem{prahofer_spohn}
M.~Pr{\"a}hofer and H.~Spohn.
\newblock Scale invariance of the {PNG} droplet and the {A}iry process.
\newblock {\em J. Stat. Phys.}, 108:1071--1106, 2002.

\bibitem{remenik_quastel}
J.~Quastel and D.~Remenik.
\newblock {A}iry processes and variational problems.
\newblock In {\em Topics in Percolative and Disordered Systems}, volume~69,
  pages 121--171. Springer Proc. Math. Stat. Springer, New York, 2014.

\bibitem{sasamoto}
T.~Sasamoto.
\newblock Spatial correlations of the 1d {KPZ} surface on a flat substrate.
\newblock {\em J. Phys. A.}, 38(33):L549--L556, 2005.

\bibitem{schutz}
G.~M. Sch{\"u}tz.
\newblock Exact solution of the master equation for the asymmetric exclusion
  process.
\newblock {\em J. Statist. Phys.}, 88(1-2):427--445, 1997.

\bibitem{sep}
T.~Seppäläinen.
\newblock Scaling for a one-dimensional directed polymer with boundary
  conditions.
\newblock {\em Ann. Probab.}, 40(1):19--73, 2012.

\bibitem{varadhan_lecture_notes}
S.~R.~S. Varadhan.
\newblock {\em Lectures on Diffusion Problems and Partial Differential
  Equations}.
\newblock Springer, 1989.

\bibitem{warren}
J.~Warren.
\newblock Dyson's {B}rownian motions, intertwining and interlacing.
\newblock {\em Electron. J. Probab.}, 12:573--590, 2007.

\bibitem{weiss}
T.~Weiss, P.~Ferrari, and H.~Spohn.
\newblock {\em Reflected {B}rownian Motions in the {KPZ} Universality Class}.
\newblock Springer International Publishing, 2017.

\bibitem{yor}
M.~Yor.
\newblock {\em Exponential Functionals of {B}rownian Motion and Related
  Processes}.
\newblock Springer-Verlag Berlin Heidelberg, 2001.

\end{thebibliography}

\end{document}